\documentclass[12pt]{iopart}
%\newcommand{\gguide}{{\it Preparing graphics for IOP Publishing journals}}
%Uncomment next line if AMS fonts required
\usepackage{iopams}

\setlength\topmargin{-1cm} \setlength\textheight{230mm}
\setlength\oddsidemargin{0mm}
\setlength\evensidemargin\oddsidemargin \setlength\textwidth{165mm}
\setlength\baselineskip{18pt}
\usepackage{color}
\usepackage[]{graphics}
\usepackage{verbatim}
\usepackage{eepic,epic}
\usepackage{epsfig,subfigure,epstopdf}
\usepackage{graphicx}
\usepackage{ntheorem}
  \usepackage[colorlinks,linktocpage,linkcolor=blue]{hyperref}

\newtheorem{theorem}{Theorem}[section]
\newtheorem{lemma}{Lemma}[section]

\newtheorem{remark}{Remark}[section]

\newtheorem{corollary}{Corollary}[section]
\newtheorem*{proof}{Proof}
%\numberwithin{equation}{section}

\newcommand{\al}{\alpha}
\newcommand{\fy}{\varphi}

\renewcommand{\d}{{\rm d}}
\def\Dal{{\partial_t^\al}}

\def\Om{\Omega}
\def\II{(\Om)}

\def\dH#1{\dot H^{#1}(\Omega)}
\def\tildu{\tilde{u}^\delta}

\def\bDal{\bar\partial_\tau^\alpha}

\def\Fht{F_{h,\tau}}
\eqnobysec

\begin{document}

\title[Numerical Analysis of Backward Subdiffusion]{Numerical Analysis of Backward Subdiffusion Problems}

\author{Zhengqi Zhang and Zhi Zhou}

\address{Department of Applied Mathematics, The Hong Kong Polytechnic University, Kowloon, Hong Kong SAR, China}
\ead{19076082r@connect.polyu.hk and zhizhou@polyu.edu.hk}

\vspace{10pt}
\begin{indented}
\item[]April 2019
\end{indented}

\begin{abstract}
The aim of this paper is to develop and analyze numerical schemes for approximately solving the backward problem of subdiffusion equation
 involving a fractional derivative in time with order $\alpha\in(0,1)$.
After using quasi-boundary value method to regularize the "mildly" ill-posed problem, we propose a fully discrete scheme
by applying finite element method (FEM) in space and convolution quadrature (CQ) in time.
We provide a thorough error analysis of the resulting discrete system in both cases of smooth and nonsmooth data.
The analysis relies heavily on smoothing properties of (discrete) solution operators,
and nonstandard error estimate for the direct problem in terms of problem data regularity.
The theoretical results are useful to balance discretization parameters, regularization parameter and noise level.
Numerical examples are presented to illustrate  the theoretical results.
\end{abstract}

%
% Uncomment for keywords
%\vspace{2pc}
\noindent{\it \bf Keywords}: fractional subdiffusion,  backward problem, quasi-boundary value method,
finite element method, convolution quadrature, error analysis
%
% Uncomment for Submitted to journal title message
%\submitto{\JPA}
%
% Uncomment if a separate title page is required
%\maketitle
%
% For two-column output uncomment the next line and choose [10pt] rather than [12pt] in the \documentclass declaration
%\ioptwocol
%

\section{Introduction}
Let $\Omega\subset\mathbb{R}^d $ ($d\ge1$) be a bounded and convex domain with smooth boundary
$\partial\Omega$, and consider the following subdiffusion equation
\begin{eqnarray}\label{eqn:fde-0}
\eqalign{\partial_t^\alpha u -\Delta u&=f\qquad {\rm in}\,\,\, \Omega\times(0,T) ,\\
\qquad \quad ~u&=0\qquad {\rm on}\,\,\, \partial \Omega\times(0,T),\\
\qquad ~~u(0)&=u_0\qquad {\rm in}\,\,\, \Omega}
\end{eqnarray}
where $T>0$ is a fixed terminal time, $f \in L^\infty(0,T;L^2(\Omega))$ and $u_0 \in L^2(\Omega)$ are given
source term and initial data, respectively, and $\Delta$ is the Laplace operator in space. Here
$\Dal u(t)$ denotes the Caputo fractional derivative in time $t$ of order $\alpha\in(0,1)$: %\cite[p. 70]{KilbasSrivastavaTrujillo:2006}
\begin{eqnarray*}
   \Dal u(t)= \frac{1}{\Gamma(1-\alpha)}\int_0^t(t-s)^{-\alpha} u'(s)\d s.
\end{eqnarray*}

In recent years, there has been a growing interest in the mathematical and numerical analysis of subdiffusion models due to
their diverse applications in describing subdiffusion processes arising from physics, engineering, biology
and finance. In a subdiffusion process, the mean squared particle displacement grows only sublinearly with time,
instead of growing linearly with time as in the normal diffusion process. We refer interested readers to \cite{Metzler:2014,MetzlerKlafter:2000}
for a long list of applications of subdiffusion arising from biology and physics.

Inverse problems for fractional diffusion have attracted much interest, and there has already been a vast literature; see e.g.,
review papers \cite{JinRundell:2015, LiLiuYamamoto:2019b, LiYamamoto:2019a, LiuLiYamamoto:2019c} and references therein. In this paper, we aim at the classical backward problem, i.e., determining the function $u(x,t)$ with $(x,t)\in \Omega\times[0,T)$ from a terminal
observation
\begin{equation*}
u(x,T)=g(x),\qquad \mbox{for all} \quad x\in\Omega.
\end{equation*}

With $f\equiv0$, the subdiffusion
model \eref{eqn:fde-0} has the following smoothing property \cite{SakamotoYamamoto:2011}:
\begin{equation*}
c_1\| u_0 \|_{L^2\II} \le \| u(T) \|_{H^2\II} \le c_2 \| u_0 \|_{L^2\II}.
\end{equation*}
This property contrasts sharply with the classical parabolic counterpart ($\alpha=1$), whose solution is infinitely differentiable
in space for all $t>0$. Thus, the backward problem of subdiffusion is far ``less'' ill-posed than that of normal diffusion.
The existence, uniqueness and stability of the time-fractional backward problem were analyzed
by Sakamoto and Yamamoto in \cite{SakamotoYamamoto:2011}. This work motivates many subsequent developments of regularized algorithms.
In \cite{LiuYamamoto:2010}, Liu and Yamamoto proposed a numerical method based on the quasi-reversibility method,
and analyze the approximation error (in terms of noise level) under \textit{a priori} smoothness assumption on $u_0$.
Then a total variation regularization method was proposed and studied by Wang and Liu in \cite{WangLiu:TV}.
In \cite{WangWei:iter}, Wang and Wei developed and analyzed an iteration method to regularize the backward problem.
The quasi-boundary value method for solving the fractional backward problem
was firstly studied in \cite{YangLiu:2013} for a one-dimensional subdiffusion model,
and then extended  in \cite{WeiWang:2014} to the general case by modifying the regularization term.
See also \cite{Liu:2019} for a novel H\"older type estimate of the quasi-boundary value methods.
%However, there is no rigorous analysis of the discretized problem. %, and this is the main contribution of this paper.

To solve the regularized system, people applied different numerical approaches, e.g., finite element method,
finite different method, etc. Then some discretization error will be introduced into the system.
Therefore it is necessary to establish an estimate to
balance discretization parameter, regularization parameter and noise level.
However, such an analysis remains unavailable, and it is precisely this gap that the project aims to fill in.

Specifically, we assume that the observation data $g^\delta$ is noisy such that
\begin{equation*}
 \| g^\delta - g \|_{L^2\II} \le \delta.
\end{equation*}
To regularize the ill-posed problem, we apply the quasi-boundary value method \cite{Liu:2019, YangLiu:2013} and
consider
\begin{eqnarray}\label{eqn:quasi-boundary}
 \eqalign{
~\quad \partial_t^\alpha \tilde u^\delta -\Delta \tilde u^\delta   &= f.\qquad\mbox{in}\,\,\,\Omega\times(0,T) ,\\
  \qquad \quad\qquad \tilde u^\delta &=0\qquad\mbox{on}\,\,\,\partial \Omega\times(0,T),\\
\gamma \tilde u^\delta(0) + \tilde u^\delta(T)&= g_\delta\qquad\mbox{in}\,\,\,\Omega,\\
}
\end{eqnarray}
where $\gamma>0$ denotes the regularization parameter. In \cite{YangLiu:2013}, Yang and Liu considered the homogeneous problem ($f\equiv0$). {It was proved
that the regularized problem
\eref{eqn:quasi-boundary} has a unique solution, and if $u_0 \in L^2\II$, then for all $t\in[0,T]$  there holds
\begin{eqnarray}\label{eqn:back-err-cont-nonsm-0}
\|(\tilde u^\delta - u)(t)  \|_{L^2\II} \rightarrow 0, \quad\mbox{as}\quad \gamma,\delta\rightarrow 0~~\mbox{and} ~~\frac\delta\gamma\rightarrow0.
\end{eqnarray}
Moreover, if $u_0\in \mbox{Dom}(A) = H^2\II\cap H_0^1\II$, there holds %\cite[Theorem 3.4]{YangLiu:2013}
\begin{eqnarray*}%\label{eqn:back-err-cont-sm}
\|  (\tilde u^\delta - u)(t)  \|_{C([0,T];L^2\II)}\le c \delta \gamma^{-1},
\end{eqnarray*}
where the constant $c$ depends only on $u_0$, $g$, $g_\delta$, but is independent
of $\delta$ and $\gamma$. By choosing $\gamma=O(\sqrt{\delta})$ \textit{a priori}, one obtains an approximation
with accuracy $O(\sqrt{\delta})$.} The result contrasts sharply with {that for}
normal diffusion, and the proof relies on the linear-decay property
of the Mittag-Leffler function $E_{\alpha,1}(-x)$.

To numerically solve the backward subdiffusion problem, we discretize the regularized problem (\ref{eqn:quasi-boundary})
by applying piecewise linear finite element method (FEM) in space and convolution quadrature generated by backward Euler scheme (CQ-BE) in time.
We provide thorough error analysis of proposed scheme and specify the way to balance the discrization error, regularization parameter and noise level.
For example, we let $h$ be the spatial mesh size and $\tau$ be temporal step size. Suppose $u(t)$ is the exact solution of the backward subdiffusion problem
and $\tilde U^\delta_n$ is the fully discrete solution (which approximates the exact solution at $t_n=n\tau$).
Then we prove that if $u_0 \in L^2\II$, then with $\ell_h = \max(1,|\ln h|)$, there holds (Theorem \ref{thm:fully-error-rev} (ii))
\begin{eqnarray*}
 \hskip-20pt \|\tilde U_0^\delta-u(0)\|_{L^2\II} \rightarrow 0,\quad \mbox{as}~~\gamma\rightarrow0, ~\frac\delta\gamma\rightarrow0,
  ~ \frac{h\ell_h^{\frac12}}{\gamma}\rightarrow0~\mbox{and}~ \frac{\tau^{\frac12}}{\gamma}\rightarrow0.
\end{eqnarray*}
Besides, for $n\ge 1$, there holds (Theorem \ref{thm:fully-error-rev})
\begin{eqnarray*}
\hskip-50pt\| \tilde U_n^\delta - u(t_n)  \|_{L^2\II} \le c\Big( \gamma t_{n}^{-\alpha}+\big(\delta+
  \gamma^{-1}(h^2 \ell_h +\tau)\big)\min(\gamma^{-1},t_n^{-\alpha}) + \gamma^{-1}\tau t_n^{\alpha-1}\Big).
\end{eqnarray*}
where the constant $c$ is independent of $h$, $\tau$, $\gamma$ and $\delta$.
Then by choosing \textit{a priori} $h\ell_h^{\frac12}=O(\delta)$, $\tau=O(\delta^2)$ and $\gamma=O(\delta)$, one obtains an approximation with accuracy $O(\delta)$
for all $t\in(0,T]$, even though the approximation at $t=0$ has no convergence rate.
The analysis relies heavily on smoothing properties of (discrete) solution operators,
and nonstandard error estimate for the direct problem in terms of problem data regularity
\cite{JinLazarovZhou:SIAM2013, JinLiZhou:2017sisc, JinLiZhou:var}.
Such the estimates could be improved provided that the problem data is smoother and compatible with the boundary condition.
For instance, if $u_0\in H^2\II \cap H_0^1\II$, there holds (Theorem \ref{thm:fully-error-rev} (i))
\begin{eqnarray*}
\hskip-0.5in\| \tilde U_n^\delta - u(t_n)  \|_{L^2\II} \le c\left\{\eqalign{
& \gamma+(h^2+\tau+\delta)\min(\gamma^{-1}, t_n^{-\alpha})+
\tau t_{n}^{\alpha-1}  , n\ge1; \\
&\gamma+(h^2+\tau+\delta)\gamma^{-1},\qquad n=0.}\right.
\end{eqnarray*}
{As far as we know, this is the first work providing rigorous error analysis of numerical methods for solving
the time-fractional backward problem.}

The rest of the paper is organized as follows. In Section \ref{sec:prelim}, we provide some preliminary results
about the solution representation and the regularization at the continuous level,
which will be intensively used in error estimation. Then in Section \ref{sec:semi} and Section \ref{sec:fully}, we describe and analyze spatially semi-discrete
scheme and fully discrete scheme,  {respectively}. Finally, in Section \ref{sec:numerics},
we present illustrative  numerical examples to illustrate the theoretical analysis.
Throughout, the notation $c$ denotes a generic constant, which may change
at each occurrence, but it is always independent of the noise level $\delta$, the regularization parameter $\gamma$,
the mesh size $h$ and time step size $\tau$ etc.

\section{Preliminary}\label{sec:prelim}

\subsection{Solution representation and Mittag-Leffler functions}
In this section, we recall the representation of the solution to the subdiffusion problem \eref{eqn:fde-0},
which plays a key role in the analysis.

To begin with, we introduce some notation.
For $q\ge0$, we denote by $\dH q$ the Hilbert space induced by the norm:
\begin{equation*}
  \|v\|_{\dH q}^2=\sum_{j=1}^{\infty}\lambda_j^q ( v,\fy_j )^2
\end{equation*}
with $\{\lambda_j\}_{j=1}^\infty$ and $\{\fy_j\}_{j=1}^\infty$ being respectively the eigenvalues and
the $L^2(\Omega)$-orthonormal eigenfunctions of the negative Laplacian $-\Delta$ on the domain
$\Omega$ with a homogeneous Dirichlet boundary condition.
Then $\{\fy_j\}_{j=1}^\infty$ forms orthonormal basis in $L^2(\Omega)$.
Further, $\|v\|_{\dH 0}=\|v\|_{L^2(\Omega)}=(v,v)^{1/2}$ is the norm in $L^2(\Omega)$.
Besides, it is easy to verify that $\|v\|_{\dH 1}= \|\nabla v\|_{L^2(\Omega)}$ is also the norm in $H_0^1(\Om)$
and  $\|v\|_{\dH 2}=\|\Delta v\|_{L^2(\Omega)}$ is equivalent to the norm in $H^2(\Om)\cap H^1_0(\Om)$
\cite[Section 3.1]{Thomee:2006}.
%Let $\|\cdot\|_{H^s(\Omega)}$
%be the norm on the interpolation scale between $H^1_0(\Omega)$ and $L^2(\Omega)$ when $s$
%is in $[0,1]$ and $\|\cdot\|_{H^{s}(\Omega)}$ to be the norm on the interpolation scale between
%$L^2(\Omega)$ and $H^{-1}(\Omega)$ when $s$ is in $[-1,0]$.  Then, $\| \cdot \|_{H^s(\Omega)}$
%and $\|\cdot\|_{\dH s}$ are equivalent for $s\in [-1,1]$ by interpolation.

Now we represent the solution to problem \eref{eqn:fde} using the eigenpairs $\{\lambda_j,\fy_j\}_{j=1}^\infty$.
To this end, we define solution operators $F(t)$ and $E(t)$ from \cite{JinLazarovZhou:SIAM2013},
\begin{eqnarray}\label{eqn:op}
\hskip-0.75in F(t)\chi=\sum_{j=1}^\infty E_{\al,1}(-\lambda_jt^\al)(\chi,\fy_j)\fy_j\quad \mbox{and}\quad
E(t)\chi=\sum_{j=1}^\infty t^{\al-1}E_{\al,\al}(-\lambda_jt^\al)(\chi,\fy_j)\fy_j,\,\,\,\,
\end{eqnarray}
where $E_{\alpha,\beta}(z)$ is the two-parameter Mittag-Leffler function: %defined by \cite[Section 1.8, pp. 40-45]{KilbasSrivastavaTrujillo:2006}
\begin{equation}\label{eqn:ML}
  E_{\alpha,\beta}(z) = \sum_{k=0}^\infty \frac{z^{k}}{\Gamma(k\alpha+\beta)}\quad \forall z\in \mathbb{C}.
\end{equation}
Then the solution of the forward problem \eref{eqn:fde-0} could be written as
\begin{equation}\label{eqn:sol-rep}
u(t)=F(t)u_0+\int_0^t E(t-s)f(s)ds.
\end{equation}

The Mittag-Leffler function $E_{\alpha,\beta}(z)$ is a generalization of the familiar exponential function
$e^z$ appearing in normal diffusion. %, and it can be evaluated efficiently (via contour integral and asymptotic
%expansion) \cite{GorenfloLoutchko:2002b,SeyboldHilfer:2008}.
Then following decay behavior of $E_{\alpha,\beta}(z)$ is crucial
to the smoothing properties of $F(t)$ and $E(t)$: for any $\alpha\in (0,1)$, the function $E_{\alpha,1}
(-\lambda t^\alpha)$ decays only polynomially like $t^{-\alpha}$ as $t\to\infty$ (cf. Lemma \ref{lem1}),
which contrasts sharply with the exponential decay for $e^{-\lambda t}$
appearing in normal diffusion.

Note that $w(t) = E_{\alpha,1}(-\lambda t^\alpha)$ be the solution to the initial value problem
\begin{equation*}
\Dal w (t) + \lambda w(t) = 0,\quad \mbox{with}\quad w(0)=1.
\end{equation*}
By means of Laplace transform, it can be written as
\begin{equation}\label{eqn:sol-ode}
w(t)=E_{\alpha,1}(-\lambda t^\alpha) = \frac{1}{2\pi\mathrm{i}} \int_{\Gamma_{\theta,\sigma}} e^{zt} z^{\alpha-1}(z^\alpha+\lambda)^{-1}\,dz.
\end{equation}
with integration over a contour $\Gamma_{\theta,\sigma}$ in the complex plane $\mathbb{C}$
(oriented counterclockwise), defined by
\begin{equation*}
  \Gamma_{\theta,\sigma}=\left\{z\in \mathbb{C}: |z|=\delta, |\arg z|\le \theta\right\}\cup
  \{z\in \mathbb{C}: z=\rho e^{\pm\mathrm{i}\theta}, \rho\ge \sigma\} .
\end{equation*}
Throughout, we fix $\theta \in(\frac{\pi}{2},\pi)$ so that $z^{\al} \in \Sigma_{\al,\theta}
\subset \Sigma_{\theta}:=\{0\neq z\in\mathbb{C}: {\rm arg}(z)\leq\theta\},$ for all $z\in\Sigma_{\theta}$.

The next lemma provides the upper and lower bounds of Mittag--Leffler functions \eref{eqn:ML}. See {\cite[Theorem 4]{Simon:2014}} for detailed proof.
\begin{lemma}\label{lem1}
Assume that $\alpha\in(0,1)$. Then there holds that
\begin{equation*}
 \frac{1}{1+\Gamma(1-\alpha)z}\leq E_{\al,1}(-z)\leq \frac{1}{1+\Gamma(1+\alpha)^{-1}z},\quad \mbox {for all}\,\,~ z\ge 0.
\end{equation*}
%These estimates are uniform for all $\alpha\in[\alpha_0,\alpha_1]$.
\end{lemma}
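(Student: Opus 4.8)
My plan is to reduce both inequalities to a single monotonicity statement. Writing $E(z):=E_{\al,1}(-z)$ and taking reciprocals, the claimed two-sided bound is equivalent, for $z>0$, to
\[
\frac{1}{\Gamma(1+\alpha)}\;\le\;\phi(z):=\frac{1}{z}\Big(\frac{1}{E(z)}-1\Big)\;\le\;\Gamma(1-\alpha),
\]
because $E(z)\ge(1+\Gamma(1-\alpha)z)^{-1}$ is the same as $\phi(z)\le\Gamma(1-\alpha)$, while $E(z)\le(1+\Gamma(1+\alpha)^{-1}z)^{-1}$ is the same as $\phi(z)\ge\Gamma(1+\alpha)^{-1}$; at $z=0$ both candidate bounds collapse to $E(0)=1$, so there is nothing to prove there. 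Thus it suffices to control the single scalar function $\phi$ on $(0,\infty)$.

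First I would pin down the two endpoint values of $\phi$, which already explains why the constants $\Gamma(1+\alpha)^{-1}$ and $\Gamma(1-\alpha)$ appear. From the series \eref{eqn:ML} one has $E(z)=1-z/\Gamma(1+\alpha)+O(z^2)$ near the origin, whence $\phi(0^+)=\Gamma(1+\alpha)^{-1}$. From the contour representation \eref{eqn:sol-ode} (deforming the integration path $\Gamma_{\theta,\sigma}$ and extracting the large-argument expansion) one obtains the algebraic decay $E(z)=\frac{1}{\Gamma(1-\alpha)z}+O(z^{-2})$ as $z\to\infty$, whence $\phi(\infty)=\Gamma(1-\alpha)$. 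Consequently the two rational functions in the statement are precisely the tangent of $E$ at the origin and its large-$z$ asymptote, and the whole lemma amounts to the assertion that $\phi$ interpolates \emph{monotonically} between these two limits.

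The crux is therefore to prove that $\phi$ is nondecreasing on $(0,\infty)$: combined with the two limits above, monotonicity immediately traps $\phi$ between $\Gamma(1+\alpha)^{-1}$ and $\Gamma(1-\alpha)$ and closes the argument. The cleanest sufficient condition is convexity of $z\mapsto 1/E(z)$ on $[0,\infty)$, since $\phi(z)=\big(\psi(z)-\psi(0)\big)/z$ with $\psi:=1/E$ and $\psi(0)=1$, so that convexity of $\psi$ makes the chord slope from the origin nondecreasing, which is exactly the monotonicity of $\phi$. To establish this convexity I would invoke the complete monotonicity of $z\mapsto E_{\al,1}(-z)$ on $[0,\infty)$, which yields a Laplace representation $E(z)=\int_0^\infty e^{-zs}\,d\mu(s)$ with $\mu$ a probability measure, and then analyze $1/E$ through the fine structure of $\mu$ (its mean $\int_0^\infty s\,d\mu=\Gamma(1+\alpha)^{-1}$ and its density at the origin $\Gamma(1-\alpha)^{-1}$ being exactly the two endpoint data identified above). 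I expect this monotonicity/convexity step to be the genuine obstacle, since $1/E$ admits no elementary closed form; this analytic heart is what \cite[Theorem 4]{Simon:2014} carries out in detail.
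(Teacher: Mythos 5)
There is a genuine gap, and it sits exactly where you locate it yourself. Your reduction of the lemma to the monotonicity of $\phi(z)=z^{-1}(1/E(z)-1)$, the endpoint computations $\phi(0^+)=\Gamma(1+\al)^{-1}$ and $\phi(\infty)=\Gamma(1-\al)$, and the identification of the mean and the density at the origin of the Mittag-Leffler measure are all correct; but they only show that the two hyperbolic bounds are asymptotically sharp, not that they hold. The entire proof therefore hinges on the monotonicity of $\phi$ (equivalently, convexity of $\psi=1/E$), which you never establish. Moreover, the ingredients you propose to use cannot suffice: complete monotonicity of $E$ alone does \emph{not} imply convexity of $1/E$. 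For instance, $f(z)=\tfrac12(1+e^{-z})$ is completely monotone, yet
\begin{equation*}
\Bigl(\frac{1}{f}\Bigr)''(z)=\frac{2e^{-z}\bigl(e^{-z}-1\bigr)}{\bigl(1+e^{-z}\bigr)^{3}}<0 \quad\mbox{for } z>0,
\end{equation*}
so $1/f$ is concave there. Hence any proof along your lines must exploit fine structure of the Mittag-Leffler measure well beyond complete monotonicity, its mean, and its density at the origin --- and that is precisely the part left blank.

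The gap cannot be closed by the citation you give, because you misattribute the content of \cite[Theorem 4]{Simon:2014}: that theorem \emph{is} the two-sided bound of the lemma itself, not your stronger claim that $\phi$ is monotone (or that $1/E$ is convex). Simon's argument is probabilistic: writing $E_{\al,1}(-z)=\mathbb{E}[e^{-zZ_\al^{-\al}}]$ for a positive $\al$-stable variable $Z_\al$ and noting $1/(1+cz)=\mathbb{E}[e^{-czL}]$ for an exponential variable $L$, he obtains the two bounds from stochastic comparison results between $Z_\al^{-\al}$ and suitably scaled exponential/Fr\'echet laws --- a route that never passes through convexity of $1/E$. So your proposal reduces the lemma to a strictly \emph{stronger} unproven statement and then cites, for that statement, a reference which only proves the weaker one. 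For comparison, the paper under review offers no proof at all: it states the lemma and cites Simon's Theorem 4 directly, which is the honest version of what your write-up actually delivers.
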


These important bounds directly translate into the limited smoothing
property in both space and time for the solution operators $E(t)$ and $F(t)$.
Next, we state a few regularity results. The proof of these results can be found in, e.g.,
 \cite{Bajlekov:2001, SakamotoYamamoto:2011, JinLiZhou:var}.

\begin{theorem} \label{thm:reg-u}
Let $u(t)$ be defined in \eref{eqn:sol-rep}. Then the following statements hold.
\begin{itemize}
  \item[$\rm(i)$] If $u_0 \in \dH q$ with $s\in[0,2]$ and $f=0$, then $u(t)$ is the solution to problem \eref{eqn:fde-0}, and
  $u(t)$ satisfies
\begin{equation*}
\|  \partial_t^{(m)}  u(t) \|_{\dH p} \le c t^{\frac{(s-p)\alpha}{2}-m} \| {u_0} \|_{\dH q}
\end{equation*}
with $0\le p-q\le 2$ and any integer $m\ge 0$ .
  \item[$\rm(ii)$]If $u_0=0$ and $f\in L^p(0,T;L^2(\Omega))$ with $1<p<\infty$, then there holds
\begin{equation*}
\|  u\|_{L^p(0,T;\dot H^2(\Omega))}
+\|\Dal u\|_{L^p(0,T;L^2(\Omega))}
\le c\|f\|_{L^p(0,T;L^2(\Omega))}.
\end{equation*}
Moreover, if $f\in L^p(0,T;L^2(\Omega))$ with $1/\alpha<p<\infty$, then $u(t)$ is the solution to problem \eref{eqn:fde-0}
such that $u\in C([0,T];L^2\II)$.
%Moreover, if $f\in L^\infty(0,T;L^2(\Omega))$, we have for any $\epsilon\in(0,1)$
%\begin{equation*}
%\| u(t) \|_{ \dot H^{2-\epsilon}(\Omega) }  \le c \epsilon^{-1} t^{\epsilon\alpha} \|  f \|_{L^\infty(0,t;L^2(\Omega))}.
%\end{equation*}
%  \item[$\rm(iii)$]If $v=0$ and $f\in C^{m-1}([0,T];L^2(\Omega))$, $\int_0^t (t-s)^{\alpha-1} \|  \partial_s^{(m)} f(s) \|_{ L^2(\Omega)}\d s<\infty$, then there holds
%\begin{equation*}
%\| \partial_t^{(m)}  u(t) \|_{ L^2(\Omega) }  \le c   \sum_{k=0}^{m-1} {t^{\alpha-(m-k)}}\| \partial_t^{(k)}f (0) \|_{L^2(\Omega)}+ \int_0^t (t-s)^{\alpha-1} \|  \partial_s^{(m)} f(s) \|_{ L^2(\Omega)}\d s.
%\end{equation*}
\end{itemize}
\end{theorem}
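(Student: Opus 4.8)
The plan is to handle the two parts by different tools: part~(i) by the spectral representation together with Lemma~\ref{lem1} and the contour formula \eref{eqn:sol-ode}, and part~(ii) by operator-valued harmonic analysis. For part~(i), with $f=0$ we have $u(t)=F(t)u_0=\sum_{j}E_{\al,1}(-\la_jt^\al)(u_0,\fy_j)\fy_j$, which solves \eref{eqn:fde-0} by termwise differentiation (each mode $E_{\al,1}(-\la_jt^\al)$ solving $\Dal w+\la_j w=0$, as displayed above \eref{eqn:sol-ode}). By the definition of the $\dH{p}$-norm, the whole estimate collapses to a single \emph{scalar} bound, uniform in $j$,
\[
 \la^{(p-q)/2}\,|\partial_t^{(m)}E_{\al,1}(-\la t^\al)|\le c\,t^{\frac{(q-p)\al}{2}-m}\qquad\mbox{for every }\la=\la_j>0;
\]
squaring, multiplying by $\la_j^q(u_0,\fy_j)^2$ and summing over $j$ reproduces $\|\partial_t^{(m)}u(t)\|_{\dH{p}}\le c\,t^{\frac{(q-p)\al}{2}-m}\|u_0\|_{\dH{q}}$ (here the regularity index of the data is $q$, i.e. $s=q$ in the stated exponent).

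For the scalar bound I would split on $m$. When $m=0$, set $x=\la t^\al$: Lemma~\ref{lem1} gives $E_{\al,1}(-x)\le c/(1+x)$, whence $x^{(p-q)/2}E_{\al,1}(-x)\le c\,x^{(p-q)/2}/(1+x)$, a function bounded on $[0,\infty)$ precisely because $0\le p-q\le2$. When $m\ge1$, I would differentiate \eref{eqn:sol-ode} to get $\partial_t^{(m)}E_{\al,1}(-\la t^\al)=\frac{1}{2\pi\mathrm i}\int_{\Gamma_{\theta,\sigma}}z^{m+\al-1}e^{zt}(z^\al+\la)^{-1}\,dz$, rescale by $z=s/t$ and re-deform to a fixed contour, and use the sectorial resolvent bound $|z^\al+\la|\ge c(|z|^\al+\la)$ (valid since $\theta$ is chosen so that $|\arg(z^\al)|\le\al\theta<\pi$ on $\Gamma_{\theta,\sigma}$, so the integrand has no pole). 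The rescaling extracts exactly the factor $t^{-m-(p-q)\al/2}$, while the remaining contour integral converges to a constant independent of $\la$ and $t$: the decay $e^{\mathrm{Re}(z)t}$ along the rays controls large $|z|$, and $m\ge1$ controls the origin.

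For part~(ii) I would first reduce the two terms to a single estimate. Since $\|v\|_{\dH{2}}=\|\Delta v\|_{L^2\II}$ and, by the equation, $\Dal u=\Delta u+f$, it suffices to prove the maximal $L^p$-regularity bound $\|\Delta u\|_{L^p(0,T;L^2\II)}\le c\|f\|_{L^p(0,T;L^2\II)}$; the bound on $\Dal u$ then follows by the triangle inequality. I would establish this by the operator-valued Fourier multiplier method: extending $f$ by zero to the half-line and writing $A=-\Delta$, the Laplace/Fourier transform gives $\widehat{\Delta u}(\xi)=-A((\mathrm i\xi)^\al+A)^{-1}\hat f(\xi)$, so $\Delta u$ is produced from $f$ by the operator-valued multiplier $M(\xi)=A((\mathrm i\xi)^\al+A)^{-1}$ on $L^p(\mathbb R;L^2\II)$. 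As $A$ is self-adjoint and positive, $M(\xi)$ diagonalizes over $\{\fy_j\}$, so by the scalar Mikhlin--H\"ormander theorem it suffices to verify, uniformly in $j$ and $\xi$, that $|m_j(\xi)|+|\xi m_j'(\xi)|\le c$ for $m_j(\xi)=\la_j((\mathrm i\xi)^\al+\la_j)^{-1}$; this is immediate from $|(\mathrm i\xi)^\al+\la_j|\ge c(|\xi|^\al+\la_j)$. Because $L^2\II$ is a Hilbert (hence UMD) space, $R$-boundedness reduces to uniform boundedness, and Weis' operator-valued multiplier theorem yields the estimate.

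Finally, for the continuity claim when $p>1/\al$, I would argue from $u(t)=\int_0^tE(t-s)f(s)\,ds$ and the kernel bound $\|E(t)\chi\|_{L^2\II}\le c\,t^{\al-1}\|\chi\|_{L^2\II}$ (immediate from \eref{eqn:op} and the boundedness of $E_{\al,\al}$): H\"older's inequality gives $\|u(t)\|_{L^2\II}\le c\big(\int_0^t(t-s)^{(\al-1)p'}ds\big)^{1/p'}\|f\|_{L^p(0,t;L^2\II)}$, and the time integral converges exactly when $(1-\al)p'<1$, i.e. $p>1/\al$; continuity in $t$ then follows by density of smooth $f$ and dominated convergence. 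I expect the main obstacle to be the maximal $L^p$-regularity in part~(ii): passing from the uniform scalar symbol bounds to the vector-valued $L^p$ estimate requires the UMD-valued Mikhlin/Weis machinery, whereas part~(i) and the continuity assertion are comparatively routine consequences of Lemma~\ref{lem1} and elementary contour and convolution estimates.
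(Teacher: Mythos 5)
The paper gives no proof of this theorem: it is quoted as a known result, with a pointer to \cite{Bajlekov:2001,SakamotoYamamoto:2011,JinLiZhou:var}. So your proposal can only be compared against the standard arguments in that literature, and measured that way it is correct and follows essentially the same routes. In part (i), the reduction to the uniform-in-$\lambda$ scalar bound $\lambda^{(p-q)/2}|\partial_t^{(m)}E_{\al,1}(-\lambda t^\al)|\le ct^{(q-p)\al/2-m}$, handled by Lemma \ref{lem1} for $m=0$ (where $0\le p-q\le2$ is exactly what makes $x^{(p-q)/2}/(1+x)$ bounded on $[0,\infty)$) and by the contour representation \eref{eqn:sol-ode} with the sectorial estimate $|z^\al+\lambda|\ge c(|z|^\al+\lambda)$ for $m\ge1$, is precisely the argument used in \cite{SakamotoYamamoto:2011} (and at the discrete level in \cite{JinLazarovZhou:SIAM2013}), and your reading of the paper's typo ($s=q$) is the right one. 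In part (ii), reducing both terms to maximal $L^p$-regularity of $\Delta u$, and the H\"older/kernel argument $\|E(t)\chi\|_{L^2\II}\le ct^{\al-1}\|\chi\|_{L^2\II}$ with $(1-\al)p'<1\Leftrightarrow p>1/\al$ for the continuity claim, are likewise the standard proofs. One sentence of yours is imprecise as written: you cannot apply the \emph{scalar} Mikhlin--H\"ormander theorem ``uniformly in $j$'' and sum, since for $p\ne2$ the $L^p(0,T;L^2\II)$ norm does not decouple across eigenmodes; what is needed is the operator-valued multiplier theorem, whose hypotheses (uniform operator-norm bounds on $M(\xi)$ and $\xi M'(\xi)$) do follow from your uniform scalar bounds precisely because $M(\xi)$ is diagonal in a fixed orthonormal eigenbasis. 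Your very next sentence---Weis' theorem plus the fact that $R$-boundedness reduces to uniform boundedness on Hilbert spaces---supplies exactly this, so the argument closes; in the Hilbert-space-valued case one can even avoid UMD machinery and quote the classical Benedek--Calder\'on--Panzone/Schwartz vector-valued Mikhlin theorem. The one remaining loose end is technical rather than conceptual: after extending $f$ by zero you should justify that the causal solution lies in $L^p(\mathbb{R};\dH{2})$ so that the whole-line Fourier-transform identity is legitimate (it does, since $\|\Delta u(t)\|_{L^2\II}$ decays like $t^{-1}$ beyond the support of $f$), or else run the argument through the Laplace transform on the half-line as in \cite{Bajlekov:2001}.
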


\subsection{Reformulation of original problem}
In our paper, we shall study an equivalent reformulation of the original backward subdiffusion problem \eref{eqn:fde-0}.
We let  $w(t)=u(t)-\int_0^t E(t-s)f(s) \,ds$, then $w$ satisfies the subdiffusion problem \eref{eqn:fde-0} with trivial source term,
and the terminal data is
$$ w(T) = u(T) - \int_0^T E(T-s)f(s) \,ds.  $$
Meanwhile, in case that $f\in L^p(0,T;L^2(\Omega))$ with $1/\alpha<p<\infty$, then by Theorem \ref{thm:reg-u} we have
 $w(0)=u(0)=u_0$. Then without loss of generality,
we only consider the following backward subdiffusion problem with trivial source data:
\begin{eqnarray}\label{eqn:fde}
\eqalign{
\partial_t^\alpha u -\Delta u &= 0
\qquad\mbox{in}\,\,\,\Omega\times(0,T) ,\\
\qquad \quad~   u &=0\qquad \mbox{on}\,\,\,\partial \Omega\times(0,T),\\
\qquad ~ u(T)&= g  \qquad\mbox{in}\,\,\,\Omega.\\
}
\end{eqnarray}
The solution $u$ has the representation that
\begin{equation}\label{eqn:sol-back}
 u(t) = F(t) u(0) = F(t) (F(T)^{-1} g).
\end{equation}

Inspired by the quasi-boundary value method discussed in \cite{Liu:2019, YangLiu:2013},
we defined an axillary function $\tilde u(t)$, which satisfies the regularized problem (without noise):
%:{ HERE should we explain the unstable property of the backward problem?}
\begin{eqnarray}\label{eqn:tildeu}
\eqalign{
\Dal\tilde{u}(t)-\Delta \tilde{u}(t)&=0,\qquad \mbox{in} \,\,\,\Om\times(0,T),\\
\qquad \qquad \quad  \tilde u &=0 \qquad\mbox{on}\,\,\,\partial \Omega\times(0,T),\\
~~\gamma\tilde{u}(0)+\tilde{u}(T)&=g,\qquad \mbox{in}\,\,\, \Om.
}
\end{eqnarray}
Here $\gamma$ denotes the regularization parameter. The appearance of regularization term essentially improves the regularity of the backward problem.

Analogue to \eref{eqn:sol-back}, the function $\tilde u$ can be represented by
\begin{equation}\label{eqn:sol-back-tilde}
\hskip-35pt\tilde u(t) = F(t)\tilde u(0) = F(t) \big(\gamma I + F(T)\big)^{-1} g = F(t) \big(\gamma I+ F(T)\big)^{-1} \big(F(T) u_0\big),
\end{equation}
where $I$ denotes the identity operator.

The next lemma provides an estimate of the operator $F(t) \big(\gamma I + F(T)\big)^{-1}$.
\begin{lemma}\label{lem:tildeu-0}
Let $F(t)$ be operator defined in \eref{eqn:op}, then
\begin{equation*}
\|F(t) \big(\gamma {I} + F(T)\big)^{-1}v\|_{\dH{q}}\le c \min(\gamma^{-1},t^{-\alpha}) \| v\|_{\dH{q}}\quad \forall ~ q\ge0,
\end{equation*}
where the generic constant $c$ may depends on $T$, but is always independent of $\gamma$ and $t$.
\end{lemma}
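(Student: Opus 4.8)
The plan is to exploit that both $F(t)$ and $F(T)$ are functions of the Dirichlet Laplacian $-\Delta$, and hence are simultaneously diagonalized by the orthonormal eigenbasis $\{\fy_j\}$. First I would observe that on the $j$-th eigenmode the composite operator $F(t)\big(\gamma I+F(T)\big)^{-1}$ acts as multiplication by the scalar symbol
\begin{equation*}
m_j(t):=\frac{E_{\al,1}(-\lambda_j t^\al)}{\gamma+E_{\al,1}(-\lambda_j T^\al)},
\end{equation*}
which is well defined and positive because $\gamma>0$ and $E_{\al,1}(-\lambda_j T^\al)>0$. Expanding $v=\sum_j (v,\fy_j)\fy_j$ and using the definition of the $\dH{q}$-norm then reduces the claimed operator bound to the single uniform scalar estimate $\sup_j m_j(t)\le c\min(\gamma^{-1},t^{-\al})$; indeed, once this is known,
\begin{equation*}
\|F(t)\big(\gamma I+F(T)\big)^{-1}v\|_{\dH{q}}^2=\sum_{j=1}^\infty \lambda_j^q\, m_j(t)^2 (v,\fy_j)^2\le \Big(\sup_j m_j(t)\Big)^2\|v\|_{\dH{q}}^2,
\end{equation*}
and taking square roots settles all $q\ge0$ at once.

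It then remains to bound $m_j(t)$ by each of the two competing quantities. The estimate $m_j(t)\le\gamma^{-1}$ is immediate: the numerator is positive and at most $1$ (both by Lemma \ref{lem1}, taking $z=\lambda_j t^\al\ge0$), while the denominator is at least $\gamma$. For the bound $m_j(t)\le ct^{-\al}$ I would first discard the regularization term via $\gamma+E_{\al,1}(-\lambda_jT^\al)\ge E_{\al,1}(-\lambda_jT^\al)$, reducing matters to the ratio $E_{\al,1}(-\lambda_j t^\al)/E_{\al,1}(-\lambda_j T^\al)$. Here is where Lemma \ref{lem1} enters essentially: applying the upper bound to the numerator and the lower bound to the denominator gives, with $x=\lambda_j$,
\begin{equation*}
\frac{E_{\al,1}(-x t^\al)}{E_{\al,1}(-x T^\al)}\le\frac{1+\Gamma(1-\al)\,xT^\al}{1+\Gamma(1+\al)^{-1}xt^\al}.
\end{equation*}

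The goal thus becomes the elementary inequality $t^\al\big(1+\Gamma(1-\al)xT^\al\big)\le c\big(1+\Gamma(1+\al)^{-1}xt^\al\big)$, which I would verify termwise: since $t\le T$ one has $t^\al\le T^\al$, while the $x$-dependent terms satisfy $\Gamma(1-\al)T^\al\cdot xt^\al\le c\,\Gamma(1+\al)^{-1}xt^\al$ as soon as $c\ge\Gamma(1-\al)\Gamma(1+\al)T^\al$. Choosing $c=\max\big(T^\al,\Gamma(1-\al)\Gamma(1+\al)T^\al\big)$ makes the bound hold for every $x\ge\lambda_1$ and every $t\in(0,T]$, with $c$ depending only on $\al$ and $T$ and, crucially, independent of $\gamma$, $t$ and the mode index $j$. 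I expect the only point genuinely requiring care to be precisely this last uniformity: the two-sided Mittag--Leffler bounds of Lemma \ref{lem1} are exactly what guarantees that one constant controls the ratio across both the small-eigenvalue regime (where numerator and denominator are both near $1$) and the large-eigenvalue regime (where the $\sim x^{-1}$ polynomial decay of each factor must cancel correctly), after which the combined estimate $m_j(t)\le c\min(\gamma^{-1},t^{-\al})$ follows and the lemma is proved.
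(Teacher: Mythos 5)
Your proposal is correct and follows essentially the same route as the paper's proof: spectral diagonalization reducing the claim to the scalar symbol $m_j(t)$, the bound $0\le E_{\al,1}(-z)\le 1$ for the $\gamma^{-1}$ estimate, and the two-sided bounds of Lemma \ref{lem1} (upper on the numerator, lower on the denominator) combined with $t\le T$ for the $t^{-\al}$ estimate. The only cosmetic difference is that you verify the final elementary inequality termwise with an explicit constant, whereas the paper uses $\frac{1+a}{1+b}\le 1+\frac{a}{b}$; these are the same estimate.
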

\begin{proof}
From Lemma \ref{lem1} we have $E_{\al,1}(-z)>0$ for any $z\ge 0$, then
\begin{equation*}
\eqalign{
\|F(t) \big(\gamma{I} + F(T)\big)^{-1}v\|_{\dH{q}}^2
&=\sum_{j=1}^{{\infty}}\left[\frac{E_{\al,1}(-\lambda_jt^\al)}{\gamma+E_{\al,1}(-\lambda_j T^\al)}\right]^2\lambda_j^q(v,\fy_j)^2.}
\end{equation*}
By applying the fact that $0 \le E_{\al,1}(-z) \le 1$ with $z\ge0$, we arrive at
\begin{equation*}
\|F(t) \big(\gamma{I} + F(T)\big)^{-1}v\|_{\dH{q}}^2 \le \gamma^{-1}\|v\|_{\dH{q}}^2.
\end{equation*}
On the other hand, we apply Lemma \ref{lem1} again to obtain for any $t\in(0,T]$
{\begin{eqnarray*}
\frac{E_{\al,1}(-\lambda_jt^\al)}{\gamma+E_{\al,1}(-\lambda_j T^\al)}
& \le \frac{E_{\al,1}(-\lambda_jt^\al)}{ E_{\al,1}(-\lambda_j T^\al)}   \le \frac{1+\Gamma(1-\al)(\lambda_jT^\al)}{1+\Gamma(1+\al)^{-1}(\lambda_jt^\al)}\\
& \le 1+ \frac{ \Gamma(1-\al)(\lambda_jT^\alpha)}{\Gamma(1+\al)^{-1}(\lambda_jt^\al)}\le c_T t^{-\al}.
\end{eqnarray*}}
and hence
\begin{equation*}
\|F(t) \big(\gamma{I} + F(T)\big)^{-1}v\|_{\dH{q}} \le c t^{-\alpha}\|v\|_{\dH{q}}.
\end{equation*}
This completes the proof of the lemma.
\end{proof}
%
%
%\begin{lemma}\label{lem:tildeu}
%Let $\tilde u$ be solutions to regularized backward subdiffusion problem \eref{eqn:tildeu}, then
%\begin{equation*}
%\|\tilde{u}(t)\|_{\dH2}\le c  \|u_0\|_{\dH{2}}.
%\end{equation*}
%\end{lemma}

Using this lemma, we can derive the following estimate of $\tilde u(t) - u(t)$ with $t\in[0,T)$.
\begin{lemma}\label{lem:err-tildeu}
Let $u$ and $\tilde u$ be solutions to problems \eref{eqn:fde} and \eref{eqn:tildeu}, respectively. Then
there holds
\begin{equation*}
\|\tilde{u}(0)-u(0)\|_{L^2\II}\le c\gamma^{\frac q2} \|u_0\|_{\dH{q}} \qquad  \forall ~q\in[0,2] .
\end{equation*}
Meanwhile, for any $t\in(0,T)$, there holds
\begin{equation*}
\|\tilde{u}(t)-u(t)\|_{L^2\II}\leq c\gamma t^{-(1-\frac{q}{2})\al}\|u_0\|_{\dH q}\qquad  \forall ~q\in[0,2] .
\end{equation*}
where the generic constant $c$ may depends on $T$, but is always independent of $\gamma$ and $t$.
\end{lemma}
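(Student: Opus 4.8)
The plan is to reduce both estimates to a single scalar multiplier bound in the eigenbasis $\{\fy_j\}$ and then interpolate in $q$. First I would record the exact algebraic form of the error. Since $u(0)=u_0=F(T)^{-1}g$ and, by \eref{eqn:sol-back-tilde}, $\tilde u(0)=\big(\gamma I+F(T)\big)^{-1}F(T)u_0$, the operator identity $\big(\gamma I+F(T)\big)^{-1}F(T)-I=-\gamma\big(\gamma I+F(T)\big)^{-1}$ (valid because $F(T)$ and $\big(\gamma I+F(T)\big)^{-1}$ are simultaneously diagonalized by $\{\fy_j\}$) gives
\[
\tilde u(0)-u(0)=-\gamma\big(\gamma I+F(T)\big)^{-1}u_0 .
\]
Applying $F(t)$ and using $u(t)=F(t)u(0)$, $\tilde u(t)=F(t)\tilde u(0)$ then yields $\tilde u(t)-u(t)=-\gamma\,F(t)\big(\gamma I+F(T)\big)^{-1}u_0$. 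In Fourier coefficients this means the $j$-th component of the error carries the multiplier $\gamma\,E_{\al,1}(-\lambda_j t^\al)/\big(\gamma+E_{\al,1}(-\lambda_j T^\al)\big)$, and setting $t=0$ (so $E_{\al,1}(0)=1$) recovers the $t=0$ case. It thus suffices to bound this scalar uniformly in $j$.

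For the $t=0$ estimate the multiplier is $m_j:=\gamma/\big(\gamma+E_{\al,1}(-\lambda_j T^\al)\big)$. I would establish two endpoint bounds: trivially $m_j\le 1$, since $E_{\al,1}(-\lambda_j T^\al)>0$ by Lemma \ref{lem1}; and, using the lower bound of Lemma \ref{lem1}, $m_j\le \gamma/E_{\al,1}(-\lambda_j T^\al)\le \gamma\big(1+\Gamma(1-\al)\lambda_j T^\al\big)\le c\gamma\lambda_j$, where the last step uses $\lambda_j\ge\lambda_1>0$ to absorb the additive constant. Writing $\theta=q/2\in[0,1]$ and interpolating, $m_j=m_j^{1-\theta}m_j^{\theta}\le 1^{1-\theta}(c\gamma\lambda_j)^{\theta}=c(\gamma\lambda_j)^{q/2}$. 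Squaring, summing against $(u_0,\fy_j)^2$, and recalling $\|u_0\|_{\dH q}^2=\sum_j\lambda_j^q(u_0,\fy_j)^2$ gives $\|\tilde u(0)-u(0)\|_{L^2\II}^2\le c\gamma^q\|u_0\|_{\dH q}^2$, which is the first claim.

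For $t\in(0,T)$ the multiplier is $n_j:=E_{\al,1}(-\lambda_j t^\al)/\big(\gamma+E_{\al,1}(-\lambda_j T^\al)\big)$, with the factor $\gamma$ kept outside. The two endpoints are now $n_j\le c\,t^{-\al}$, which is exactly the bound already proved inside Lemma \ref{lem:tildeu-0}, and $n_j\le c\lambda_j$, obtained as above from $E_{\al,1}(-\lambda_j t^\al)\le 1$ together with $\gamma+E_{\al,1}(-\lambda_j T^\al)\ge E_{\al,1}(-\lambda_j T^\al)\ge\big(1+\Gamma(1-\al)\lambda_j T^\al\big)^{-1}$. Interpolating with the same exponent $\theta=q/2$ gives $n_j\le c\,t^{-(1-q/2)\al}\lambda_j^{q/2}$, whence $\|\tilde u(t)-u(t)\|_{L^2\II}^2=\gamma^2\sum_j n_j^2(u_0,\fy_j)^2\le c\gamma^2 t^{-(2-q)\al}\|u_0\|_{\dH q}^2$; taking square roots finishes the proof.

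The summation is routine; the only genuinely delicate points are the two endpoint multiplier bounds. The harder one is the $q=2$ endpoint $n_j\le c\lambda_j$ (equivalently $m_j\le c\gamma\lambda_j$): it rests entirely on the sharp lower bound for $E_{\al,1}(-\lambda_j T^\al)$ from Lemma \ref{lem1}, and one must check that replacing $1+\Gamma(1-\al)\lambda_j T^\al$ by $c\lambda_j$ via $\lambda_j\ge\lambda_1$ keeps $c$ dependent only on $T$, $\al$, $\lambda_1$, and independent of $\gamma$ and $t$. Once both endpoints are secured, the elementary power interpolation $a\le M_0^{1-\theta}M_1^{\theta}$ for $0\le a\le\min(M_0,M_1)$ is immediate and delivers the full range $q\in[0,2]$.
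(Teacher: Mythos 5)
Your proposal is correct and follows essentially the same route as the paper's proof: the same error representation $\tilde u(t)-u(t)=-\gamma F(t)\big(\gamma I+F(T)\big)^{-1}u_0$, the same two-sided Mittag--Leffler bounds from Lemma \ref{lem1} (with $\lambda_j\ge\lambda_1$ absorbing the additive constant), and the same power interpolation in $q$, which the paper carries out implicitly inside a single chain of inequalities rather than as explicit endpoint bounds. The only cosmetic difference is that you reuse the $t^{-\alpha}$ bound from Lemma \ref{lem:tildeu-0} as one endpoint, where the paper rederives it inline.
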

\begin{proof}
By \eref{eqn:sol-back} and \eref{eqn:sol-back-tilde} we obtain
%\begin{align}F(T)u_0=g-\int_0^T E(T-s)f(s)ds;\\
%(\gamma I+F(T))\tilde{u}(0)=g-\int_0^T E(T-s)f(s)ds
%.
%\end{equation}
%So we have
\begin{equation*}
  \tilde{u}(0)-u(0) =- \big(\gamma I+F(T)\big)^{-1} \gamma u_0.
\end{equation*}
Now applying \eref{eqn:op} and positivity of $E_{\al,1}(z)$ with $z\le0$,
we derive that for any $q\in[0,2]$,%and the property of Mittag-Leffler functions in Lemma \ref{lem1}, we derive that
\begin{equation*}
\eqalign{
\|\tilde{u}(0)-u(0)\|_{L^2\II}^2&=\|(\gamma I+F(T))^{-1}\gamma u_0\|_{L^2\II}^2\\
&= \sum_{j=1}^\infty \Big(\frac{\gamma}{\gamma+E_{\al,1}(-\lambda_jT^\al)}\Big)^2(u_0,\fy_j)^2\\
&\leq \sum_{j=1}^\infty \frac{\gamma^q}{\lambda_j^q |E_{\al,1}(-\lambda_jT^\al)|^q} \lambda_j^q(u_0,\fy_j)^2.}
\end{equation*}
The property of Mittag-Leffler functions in Lemma \ref{lem1} implies that
\begin{equation*}
 \frac{\gamma^q}{\lambda_j^q|E_{\al,1}(-\lambda_jT^\al)|^q} \le \frac{c\gamma^q(1+\lambda_j T)^q}{\lambda_j^q} \le c_T \gamma^q.
\end{equation*}
and hence
%From \eref{boundofE} and \eref{boundofE-1} we have
\begin{equation*}
\|\tilde{u}(0)-u(0)\|_{L^2\II}^2\leq c \gamma^q \|u_0\|_{\dH q}^2.
\end{equation*}
Now we turn to the second estimate, which follows from the representation
\begin{equation*}
\tilde{u}(t)-u(t) = - F(t) \big(\gamma I+F(T)\big)^{-1} \gamma u_0.
\end{equation*}
Here we apply the definition of the solution operator and obtain
\begin{equation*}
\eqalign{
\|\tilde{u}(t)-u(t)\|_{L^2\II}^2&=\|{F(t)}(\gamma I+F(T))^{-1}\gamma u_0\|_{L^2\II}^2\\
 &=   \sum_{j=1}^\infty \left(\frac{\gamma E_{\al,1}(-\lambda_j t^\al)}{\gamma+E_{\al,1}(-\lambda_j T^\al)}\right)^2 (u_0,\fy_j)^2\\
&\le\gamma^2 \sum_{j=1}^\infty \left(\frac{  E_{\al,1}(-\lambda_j t^\al)}{ \lambda_j^{q/2} E_{\al,1}(-\lambda_j T^\al)}\right)^2 \lambda_j^q (u_0,\fy_j)^2}
\end{equation*}
Then Lemma \ref{lem1} leads to the estimate
\begin{equation*}
 \frac{  E_{\al,1}(-\lambda_j t^\al)}{ \lambda_j^{{q/2}} E_{\al,1}(-\lambda_j T^\al)}\le \frac{c(1+\lambda_jT^\alpha)}{\lambda_j^{q/2}(1+\lambda_jt^\alpha)}
 \le c_T \frac{\lambda_j^{1-q/2}}{1+\lambda_jt^\alpha}\le c_T t^{-(1-q/2)\alpha}, \\
\end{equation*}
and therefore there holds
\begin{equation*}
\|\tilde{u}(t)-u(t)\|_{L^2\II}^2 \le c \gamma^2 t^{-(2-q)\alpha} \| u_0 \|_{\dH q}^2.\\
\end{equation*}
This completes the proof of the lemma.
\end{proof}

If $u_0\in L^2\II = \dH0$, the preceding lemma does not imply a convergence rate. However, one can still show the convergence
in case of nonsmooth data.
\begin{corollary}\label{cor:err-tildeu-L2}
Assume that $u_0\in L^2\II$. Let $u$ and $\tilde u$ be solutions to problems \eref{eqn:fde} and \eref{eqn:tildeu}, respectively. Then
there holds that
\begin{equation*}
\lim_{\gamma\rightarrow0}\|\tilde{u}(0) -  u(0)\|_{L^2\II} =0 .
\end{equation*}
\end{corollary}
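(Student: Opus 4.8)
The plan is to exploit the spectral representation already obtained in the proof of Lemma~\ref{lem:err-tildeu}, namely
\[
\tilde u(0) - u(0) = -(\gamma I + F(T))^{-1}\gamma u_0,
\]
so that
\[
\|\tilde u(0)-u(0)\|_{L^2\II}^2 = \sum_{j=1}^\infty \Big(\frac{\gamma}{\gamma+E_{\al,1}(-\lambda_j T^\al)}\Big)^2 (u_0,\fy_j)^2.
\]
The difficulty is that for nonsmooth data $u_0\in L^2\II=\dH0$ the estimate of Lemma~\ref{lem:err-tildeu} degenerates (at $q=0$) into a $\gamma$-independent bound, so \emph{no} convergence rate is available; the limit must therefore be extracted by a softer argument rather than by a direct estimate.

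First I would record two elementary facts about the spectral multiplier $m_j(\gamma) := \gamma/(\gamma+E_{\al,1}(-\lambda_j T^\al))$. By Lemma~\ref{lem1} we have $E_{\al,1}(-\lambda_j T^\al)>0$ for every $j$, hence $m_j(\gamma)\to 0$ as $\gamma\to0$ for each fixed $j$; moreover $0\le m_j(\gamma)\le 1$ uniformly in $j$ and $\gamma$. Consequently each summand $m_j(\gamma)^2(u_0,\fy_j)^2$ is dominated by $(u_0,\fy_j)^2$, and $\sum_j (u_0,\fy_j)^2 = \|u_0\|_{L^2\II}^2<\infty$. The conclusion then follows by dominated convergence on the counting measure: since the summand tends to $0$ pointwise in $j$ and is dominated by a summable sequence, the limit and the sum interchange, giving $\lim_{\gamma\to0}\|\tilde u(0)-u(0)\|_{L^2\II}^2 = \sum_j 0 = 0$.

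Alternatively, and in the spirit of the classical regularization literature, one can argue by density. Given $\ep>0$, choose a finite eigenfunction expansion $v=\sum_{j\le N}(u_0,\fy_j)\fy_j$ with $\|u_0-v\|_{L^2\II}<\ep$. Since the operator $(\gamma I+F(T))^{-1}\gamma$ has $L^2\II$-operator norm at most $1$ (because $m_j(\gamma)\le1$), splitting $u_0=(u_0-v)+v$ controls the rough piece by $\ep$ uniformly in $\gamma$, while Lemma~\ref{lem:err-tildeu} with $q=2$ bounds the smooth piece $v\in\dH2$ by $c\gamma\|v\|_{\dH2}\to0$. Letting $\gamma\to0$ yields $\limsup_{\gamma\to0}\|\tilde u(0)-u(0)\|_{L^2\II}\le\ep$, and arbitrariness of $\ep$ finishes the proof.

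The only subtle point on either route is the strict positivity $E_{\al,1}(-\lambda_j T^\al)>0$: this is precisely what forces the pointwise decay $m_j(\gamma)\to0$ (equivalently, what makes $v\in\dH2$ converge with rate), and it is supplied directly by the lower bound in Lemma~\ref{lem1}. I expect this to be the main conceptual obstacle, in the sense that passing from a mere uniform bound to genuine convergence is the whole content of the nonsmooth-data case; once the domination plus pointwise-decay structure is identified, the remaining steps are routine.
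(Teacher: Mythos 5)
Your proof is correct, but it follows a genuinely different route from the paper's. The paper does not argue spectrally at $t=0$: it uses the qualitative continuity $\tilde u,\,u\in C([0,T];L^2\II)$ to pick a small $t_0>0$ with $\|\tilde u(t_0)-\tilde u(0)\|_{L^2\II}+\|u(t_0)-u(0)\|_{L^2\II}<\ep/2$, then invokes Lemma \ref{lem:err-tildeu} at the fixed positive time $t_0$ (where the $q=0$ bound $c\gamma t_0^{-\al}\|u_0\|_{L^2\II}$ does decay in $\gamma$), and finishes with the triangle inequality. You instead stay at $t=0$ and extract the limit directly from the multiplier representation, either by dominated convergence on the counting measure or by density plus the uniform bound $m_j(\gamma)\le 1$ and the $q=2$ rate on the smooth part; both variants are complete, and you correctly identify the strict positivity $E_{\al,1}(-\lambda_j T^\al)>0$ from Lemma \ref{lem1} as the engine of the pointwise decay. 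Your route actually buys something beyond elegance: in the paper's argument the function $\tilde u$ depends on $\gamma$, so the choice of $t_0$ in the continuity step must in principle be made uniformly in $\gamma$ (the quantifiers are ``choose $t_0$, then choose $\gamma_0$''), and establishing that uniformity requires precisely your domination argument with the bound $E_{\al,1}(-\lambda_j T^\al)/(\gamma+E_{\al,1}(-\lambda_j T^\al))\le 1$; your proof sidesteps this subtlety entirely. What the paper's soft-analysis template buys in exchange is that it works purely at the level of stated estimates (continuity of solutions plus Lemma \ref{lem:err-tildeu}) without reopening the eigenfunction expansion, a style the paper reuses when deducing the analogous nonsmooth-data convergence for the discrete schemes in Remark \ref{rem:weak-semi} and Theorem \ref{thm:fully-error-rev} by simply concatenating componentwise bounds with this corollary.
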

\begin{proof}
In case that $u_0\in L^2\II$, we know that $\tilde u, u\in C([0,T];L^2\II)$. Then for any small $\epsilon$, we choose $t_0$ small enough such that
$$  \|  \tilde u(t_0) - \tilde u(0) \|_{L^2\II}  + \|  u(t_0) -  u(0) \|_{L^2\II} < \epsilon/2. $$
Then by Lemma \ref{lem:err-tildeu}, we may find $\gamma_0$ small enough such that
$$  \|  \tilde u(t_0)  -  u(t_0) \|_{L^2\II} < \epsilon/2\quad \mbox{for all} ~\gamma<\gamma_0. $$
By triangle inequality , we obtain that for any $\gamma<\gamma_0$
$$  \|  \tilde u(0)  -  u(0) \|_{L^2\II} < \epsilon. $$
Therefore, $\tilde u(0)$ converges to $u(0)$ in $L^2$-sense, as $\gamma \rightarrow 0$.
\end{proof}

{\begin{remark}\label{rem:err-tildeu-L2}
The estimate in Corollary \ref{cor:err-tildeu-L2} seems to be a special case of (\ref{eqn:back-err-cont-nonsm-0}) in case that $\delta=0$.
However, the  proof of \cite[Theorem 3.4]{YangLiu:2013} is not directly applicable in this case. Besides,
the estimate of $\| \widetilde u - u \|$ is missing in the literature, but it is important
in the error analysis of the numerical solution in the next section.
\end{remark}}

\section{Spatial semidiscrete method by finite element method}\label{sec:semi}
In this section, we shall propose and analyze a spatially semidiscrete scheme for solving the backward subdiffusion problem (\ref{eqn:fde}).
{Even though the semidiscrete scheme is not directly implementable and rarely used in practical computation, it is important for
understanding the role of the regularity of problem data and also for the analysis of fully discrete schemes.}

\subsection{Semidiscrete scheme for solving direct problem.}
Now we describe the spatial discretization by finite element method.
For $h\in(0,h_0]$,  we denote
by $\mathcal{T}_h = \{K_j\}$ a triangulation of $\Omega_h=$ {Int}$(\cup \overline K_j)$
into mutually disjoint open face-to-face simplices $K_j$. Assume that all boundary vertices of $\Omega_h$
locate on $\partial\Omega$. We also assume that
$\{\mathcal{T}_h\}$  is globally quasi-uniform, i.e., $|K_j|\ge c h^d$ with a given $c>0$.
%Then there holds that $\dis$
Let $X_h$ be the finite dimensional space of continuous piecewise
linear functions associated with $\mathcal{T}_h$, that vanish outside $\Omega_h$.

The semidiscrete Galerkin FEM for problem \eref{eqn:fde-0}
is: find $ u_h (t)\in X_h$ such that
\begin{equation}\label{fem}
 \eqalign{
{( \Dal u_{h},\chi)}+ (\nabla u_h,\nabla \chi) &= {(f, \chi)},
\qquad \forall ~\chi\in X_h,\ T \ge t >0,\\
 \hskip1.2in u_h(0)&=P_h u_0,}
\end{equation}
To describe the schemes, we need the $L^2(\Omega)$ projection $P_h:L^2(\Omega)\to X_h$ and
Ritz projection $R_h:\dH1\to X_h$, respectively, defined by (recall that $(\cdot, \cdot)$
denotes the $L^2(\Omega)$ inner product)
\begin{eqnarray*}%\label{eqn:proj}
 \hskip.3in     (P_h \psi,\chi) & =(\psi,\chi) \qquad\forall~ \chi\in X_h,\psi\in L^2(\Omega),\\
    (\nabla R_h \psi,\nabla\chi) & =(\nabla \psi,\nabla\chi) \qquad \forall ~\chi\in X_h, \psi\in \dot H^1(\Omega).
\end{eqnarray*}
%Then by means of duality, the operator $P_h$ can be boundedly extended to $\dH s$, $s \in [-1,0]$.
The following approximation properties of $R_h$ and $P_h$ are well known \cite[Chapter 1]{Thomee:2006}:
\begin{eqnarray}
\fl\|P_h\psi-\psi\|_{L^2(\Omega)}+h\|\nabla(P_h\psi-\psi)\|_{L^2(\Omega)}& \leq ch^q\|\psi\|_{H^q(\Omega)}\qquad \forall\psi\in \dH q, q=1,2,\label{eqn:err-Ph}\\
\fl\|R_h\psi-\psi\|_{L^2(\Omega)}+h\|\nabla(R_h\psi-\psi)\|_{L^2(\Omega)}& \leq ch^q\|\psi\|_{H^q(\Omega)}\qquad \forall\psi\in \dH q, q=1,2.\label{eqn:err-Rh}
\end{eqnarray}
%The SG method is obtained from \eref{eqn:semi}
%when the approximate inner product $[\cdot,\cdot]$ is chosen to be the usual $L^2(\Omega)$ inner product $(\cdot,\cdot)$.
%The SG method was first developed and rigorously analyzed for nonsmooth data in \cite{JinLazarovZhou:SIAM2013,JinLazarovPasciakZhou:2015}
%for problem \eref{eqn:fde-0} on convex polygonal domains, and in
%\cite{LeMcLeanLamichhane:2017} for nonconvex domains.

Upon introducing the discrete Laplacian $\Delta_h: X_h\to X_h$ defined by
\begin{equation*}
  -(\Delta_h\psi,\chi)=(\nabla\psi,\nabla\chi)\quad\forall\psi,\,\chi\in X_h,
\end{equation*}
and $f_h= P_h f$, we may write the spatially semidiscrete problem \eref{fem} as
\begin{equation}\label{fem-operator}
   \Dal u_{h}(t)-\Delta_h u_h(t) =f_h(t) \mbox{ for }t\ge0 \quad \mbox{with} \quad  u_h(0)=P_h u_h.
\end{equation}
Now we give a representation of the solution of \eref{fem-operator} using the
eigenvalues and eigenfunctions
$\{\lambda^h_j\}_{j=1}^K$ and $\{\varphi_j^h\}_{j=1}^K$ of the discrete Laplacian
$-\Delta_h$. Here
we introduce the discrete analogue of \eref{eqn:op} for $t>0 $:
\begin{equation}\label{eqn:sol-op-semi-0}
\fl F_h(t)\chi=\sum_{j=1}^K E_{\al,1}(-\lambda_j^h t^\al)(\chi,\fy_j^h)\fy_j^h~~\mbox{and}~~
E_h(t)\chi=\sum_{j=1}^K t^{\al-1} E_{\al,\al}(-\lambda_j^h t^\al)(\chi,\fy_j^h)\fy_j^h,
\end{equation}
Then the solution $u_h(t)$ of the semidiscrete problem \eref{fem-operator} can be expressed by:
\begin{equation}\label{Duhamel_o}
     u_h(t)= F_h(t) {u_h(0)} + \int_0^t E_h(t-s) f_h(s)\,\d s.
\end{equation}

%\begin{lemma}\label{lem3}
%We have $F_h(t)$ and $u_h(0)\in X_h$. Then
%\begin{equation}
%\||(\Dal)^\ell u_h(t)\||_p=\||(\Dal)^\ell F_h(t)u_h(0)\||_p\leq Ct^{-\al(\ell+\frac{p-q}{2})}\||u_h(0)\||_q,\quad t>0,
%\end{equation}
%where for $\ell =0$, $q\leq p$ and $0\leq p-q \leq 2$, and for $\ell =1$, $p\leq q\leq p+2$.
%\end{lemma}
The discrete solution operator $E_h(t)$ satisfies the following smoothing property. See {\cite[Lemma 3.2]{JinLazarovZhou:SIAM2013}}
for proof.
\begin{lemma}\label{lem4}
We have $E_h(t)$ and $\psi\in S_h$. Then we have for all $t>0$ and $q\in[0,1]$
\begin{equation*}
\|\Delta_h^q E_h(t) \psi\|_{L^2\II}\leq c t^{(1-q)\alpha -1} \|  \psi \|_{L^2\II}.
\end{equation*}
\end{lemma}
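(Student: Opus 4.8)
The plan is to represent $E_h(t)$ through its inverse Laplace transform and reduce the estimate to a resolvent bound for the positive, self-adjoint operator $-\Delta_h$. Since $t^{\al-1}E_{\al,\al}(-\lambda t^\al)$ is the inverse Laplace transform of $(z^\al+\lambda)^{-1}$ (compare the representation \eref{eqn:sol-ode} for $E_{\al,1}$), the operator $E_h(t)$ admits the contour representation
\begin{equation*}
E_h(t)\psi = \frac{1}{2\pi\mathrm{i}}\int_{\Gamma_{\theta,\sigma}} e^{zt}(z^\al-\Delta_h)^{-1}\psi\,\d z ,
\end{equation*}
with $\Gamma_{\theta,\sigma}$ as in \eref{eqn:sol-ode}. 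Because $E_h(t)$ is a finite sum over the discrete eigenpairs $\{\lambda_j^h,\fy_j^h\}_{j=1}^K$, I may apply $(-\Delta_h)^q$ (interpreted spectrally, so that $\|(-\Delta_h)^q v\|_{L^2\II}^2=\sum_j(\lambda_j^h)^{2q}(v,\fy_j^h)^2$; this is what the norm $\|\Delta_h^q\cdot\|$ in the statement records) under the integral sign without difficulty, and the claim follows once the resulting integral is bounded.

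The key step is the resolvent estimate
\begin{equation*}
\|(-\Delta_h)^q(z^\al-\Delta_h)^{-1}\|\le c\,|z|^{\al(q-1)},\qquad z\in\Sigma_\theta,\ 0\le q\le1 .
\end{equation*}
Since $-\Delta_h$ is self-adjoint with positive eigenvalues, the operator norm equals $\sup_j (\lambda_j^h)^q/|z^\al+\lambda_j^h|$. The choice of $\theta\in(\frac\pi2,\pi)$ guarantees $z^\al\in\Sigma_{\al,\theta}$ with $\al\theta<\pi$, so the positive reals $\lambda_j^h$ are bounded away from $-z^\al$ and $|z^\al+\lambda_j^h|\ge c(|z|^\al+\lambda_j^h)$; writing $s=\lambda_j^h/|z|^\al$ and using that $s^q/(1+s)$ is uniformly bounded for $s\ge0$ when $q\in[0,1]$ then yields the bound.

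With the resolvent estimate in hand, I would insert it into the contour integral and exploit the scaling freedom in $\Gamma_{\theta,\sigma}$ by taking $\sigma=1/t$. On the two rays $z=\rho e^{\pm\mathrm{i}\theta}$ one has $|e^{zt}|=e^{\rho t\cos\theta}$ with $\cos\theta<0$, and on the arc $|z|=1/t$ the exponential is bounded; substituting $\rho=s/t$ turns both contributions into $t^{(1-q)\al-1}$ times a convergent, $t$-independent integral, the convergence being ensured by the negative exponential dominating the algebraic factor $|z|^{\al(q-1)}$. Collecting the pieces gives $\|(-\Delta_h)^q E_h(t)\psi\|_{L^2\II}\le c\,t^{(1-q)\al-1}\|\psi\|_{L^2\II}$.

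The main obstacle is securing the resolvent estimate with a constant independent of $h$ and valid uniformly along the whole contour; once the sectoriality of $-\Delta_h$ is used to control $|z^\al+\lambda_j^h|$ from below, the remaining contour integration is routine. An alternative, entirely elementary route avoids contours: writing $x=\lambda_j^h t^\al$ reduces the eigenvalue-wise estimate to the uniform bound $x^q|E_{\al,\al}(-x)|\le c$ for $x\ge0$, which follows from the standard decay $|E_{\al,\al}(-x)|\le c/(1+x)$. This is shorter, but it relies on a Mittag-Leffler bound not recorded in the excerpt (Lemma \ref{lem1} covers only $E_{\al,1}$), whereas the contour argument uses only the sectoriality already available; I would therefore present the contour proof as primary.
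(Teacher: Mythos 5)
Your proof is correct, but your primary route is genuinely different from the paper's. The paper in fact contains no proof of Lemma \ref{lem4}: it simply cites \cite[Lemma 3.2]{JinLazarovZhou:SIAM2013}, and the argument in that reference is precisely your ``alternative elementary route'' --- expand $E_h(t)\psi$ in the discrete eigenbasis $\{\lambda_j^h,\fy_j^h\}$ and invoke the decay bound $|E_{\al,\al}(-x)|\le c/(1+x)$ for $x\ge0$, so that
$(\lambda_j^h)^q\, t^{\al-1}|E_{\al,\al}(-\lambda_j^h t^\al)| = t^{(1-q)\al-1}(\lambda_j^h t^\al)^q |E_{\al,\al}(-\lambda_j^h t^\al)| \le c\, t^{(1-q)\al-1}$ uniformly in $j$ when $q\in[0,1]$. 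Your contour argument is a sound, self-contained alternative: the representation $E_h(t)=\frac{1}{2\pi\mathrm{i}}\int_{\Gamma_{\theta,\sigma}}e^{zt}(z^\al-\Delta_h)^{-1}\,\d z$ is the correct Laplace inversion of $(z^\al+\lambda)^{-1}$; the resolvent bound $\|(-\Delta_h)^q(z^\al-\Delta_h)^{-1}\|\le c|z|^{\al(q-1)}$ holds with $c$ independent of $h$ because your supremum runs over all $\lambda>0$ and $|z^\al+\lambda|\ge c(|z|^\al+\lambda)$ follows from $\al\theta<\pi$; and the choice $\sigma=1/t$ indeed yields $t^{(1-q)\al-1}$ from both the rays and the arc. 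As for what each approach buys: the spectral proof is essentially three lines, but it rests on a Mittag--Leffler estimate for $E_{\al,\al}$ that this paper never records (its Lemma \ref{lem1} treats only $E_{\al,1}$), a gap you correctly identified; your contour proof needs nothing beyond sectoriality, would extend to non-self-adjoint settings, and is the same machinery the paper itself uses for the fully discrete operators in \eref{eqn:FE_ht-0}--\eref{op:disc} and Lemma \ref{lem:fully-sol}, at the cost of being longer than the one-line spectral estimate of the cited reference.
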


\subsection{Semidiscrete scheme for solving backward problem.}
In this part, we consider the semidiscrete solution $\tilde u_h^\delta(t) \in S_h$ such that
\begin{equation}\label{DBWP}
\eqalign{
\Dal\tildu_h(t)-\Delta_h\tildu_h(t)&=0,\quad \forall t\in(0,T]\\
\hskip0.22in\gamma\tildu_h(0)+\tildu_h(T)&=P_hg_\delta.}
\end{equation}
Then the function {$\tilde{u}_h^\delta$} can be written as
\begin{equation}\label{eqn:semi-sol-1}
 \tilde{u}_h^\delta(t) = F_h(t) {  \tilde{u}^\delta_h(0)} = F_h(t) (\gamma + F_h(T))^{-1} P_hg_\delta.
\end{equation}
Meanwhile, we shall use an axillary function  $\tilde{u}_h(t)$, which is the semidiscrete solution to \eref{eqn:tildeu}, i.e.,
satisfying
\begin{equation}\label{DBWP-1}
\eqalign{
\Dal\tilde{u}_h(t)-\Delta_h\tilde{u}_h(t)&=0,\quad \forall t\in(0,T]\\
\hskip0.22in\gamma\tilde{u}_h(0)+\tilde{u}_h(T)&=P_hg,}
\end{equation}
Similarly, we have the representation
\begin{equation}\label{eqn:semi-sol-2}
\tilde u_h(t) = F_h(t) \tilde u_h(0) = F_h(t) (\gamma I+ F_h(T))^{-1} P_hg.
\end{equation}
%
%Next we shall derive a bound of $u_h(t) - \tilde{u}_h(t)$. Similar results (with $q=2$ and $q=2$) at the continuous level has
%been proved in \cite{YangLiu:2013, WeiWang:2014}. We shall briefly sketch the proof for completeness,
%and note that the error is independent of the mesh size $h$.

Analogue to Lemma \ref{lem:tildeu-0}, we have the following estimate of the operator $F_h(t) (\gamma I+ F_h(T))^{-1}$.
Note that the error is independent of the mesh size $h$.
\begin{lemma}\label{lem:tildeuh-0}
Let $F_h(t)$ be operator defined in \eref{eqn:sol-op-semi-0}, then there holds that
\begin{equation*}
\|F_h(t) \big(\gamma I+ F_h(T)\big)^{-1}v\|_{L^2\II}\le c \min(\gamma^{-1},t^{-\alpha}) \| v\|_{L^2\II}\quad \forall ~ v\in X_h,
\end{equation*}
where the constant $c$ may depends on $T$, but is always independent of $h$, $\gamma$ and $t$.
\end{lemma}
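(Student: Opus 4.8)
The plan is to reproduce the proof of Lemma~\ref{lem:tildeu-0} almost verbatim, the only difference being that the continuous eigenpairs $\{\la_j,\fy_j\}$ are replaced throughout by the discrete eigenpairs $\{\la_j^h,\fy_j^h\}$ of $-\Delta_h$. The structural fact I would lean on is that $\{\fy_j^h\}_{j=1}^K$ is an $L^2\II$-orthonormal basis of $X_h$, so that $F_h(t)$ and $F_h(T)$ are simultaneously diagonalized in this basis. Since $E_{\al,1}(-\la_j^h T^\al)>0$ by Lemma~\ref{lem1}, the operator $\gamma I+F_h(T)$ has strictly positive eigenvalues $\gamma+E_{\al,1}(-\la_j^h T^\al)$ and is therefore invertible on $X_h$. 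Expanding $v=\sum_{j=1}^K(v,\fy_j^h)\fy_j^h$, I would obtain the diagonal identity
$$\|F_h(t)\big(\gamma I+F_h(T)\big)^{-1}v\|_{L^2\II}^2=\sum_{j=1}^K\left[\frac{E_{\al,1}(-\la_j^h t^\al)}{\gamma+E_{\al,1}(-\la_j^h T^\al)}\right]^2(v,\fy_j^h)^2,$$
the exact discrete analogue of the first identity in the proof of Lemma~\ref{lem:tildeu-0}.

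From here the two bounds follow by estimating the multiplier $m_j:=E_{\al,1}(-\la_j^h t^\al)/(\gamma+E_{\al,1}(-\la_j^h T^\al))$ in two ways. For the $\gamma^{-1}$ bound I would use $0\le E_{\al,1}(-z)\le1$ for $z\ge0$ (which follows from Lemma~\ref{lem1}) on the numerator together with positivity of the denominator, giving $m_j\le\gamma^{-1}$. For the $t^{-\al}$ bound I would discard $\gamma$ in the denominator and apply the lower/upper Mittag--Leffler bounds of Lemma~\ref{lem1} to numerator and denominator respectively,
$$m_j\le\frac{E_{\al,1}(-\la_j^h t^\al)}{E_{\al,1}(-\la_j^h T^\al)}\le\frac{1+\Gamma(1-\al)\la_j^h T^\al}{1+\Gamma(1+\al)^{-1}\la_j^h t^\al}\le c_T\,t^{-\al},$$
exactly the chain used for the continuous operator. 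Inserting $m_j\le\min(\gamma^{-1},c_T t^{-\al})$ into the diagonal identity and summing against $\sum_j(v,\fy_j^h)^2=\|v\|_{L^2\II}^2$ then yields the assertion.

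The only point genuinely worth checking, and the feature the lemma advertises, is that the constant is independent of $h$. This is immediate from the above: the Mittag--Leffler bounds of Lemma~\ref{lem1} hold for every nonnegative argument with the fixed constants $\Gamma(1\pm\al)^{\mp1}$, so the estimate of each $m_j$ is insensitive to the actual value of the discrete eigenvalue $\la_j^h$ and hence to the number $K=K(h)$ of eigenmodes. After the reductions the bound no longer contains $\la_j^h$ at all, so the resulting constant depends only on $T$. I do not anticipate a real obstacle: the whole content is that the discrete spectral calculus mirrors the continuous one and that the decay estimates for $E_{\al,1}$ are uniform in the eigenvalue.
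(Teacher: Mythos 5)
Your proposal is correct and is exactly the argument the paper intends: the paper omits the proof of this lemma, stating it as the ``analogue'' of Lemma~\ref{lem:tildeu-0}, and your adaptation---diagonalizing in the discrete eigenbasis $\{\fy_j^h\}$ of $X_h$ and reusing the two Mittag--Leffler bounds of Lemma~\ref{lem1}, whose constants are uniform in the eigenvalue and hence in $h$---is precisely that analogue. Your closing observation about why the constant is $h$-independent (the eigenvalue cancels in the $t^{-\al}$ chain, and the $\gamma^{-1}$ bound never uses it) is the whole point of the lemma and is argued correctly.
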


This Lemma together with \eref{eqn:semi-sol-1} and \eref{eqn:semi-sol-2}
immediately leads to the following estimate of $\tilde{u}_h^\delta(t)-\tilde{u}_h(t)$.
\begin{corollary}\label{cor:err-2}
Let $\tilde{u}_h^\delta$ and $\tilde{u}_h$ be the solution to the semidiscrete problems \eref{DBWP} and \eref{DBWP-1},
respectively. Then, there holds that
\begin{equation*}
\| (\tilde{u}_h^\delta -\tilde{u}_h)(t) \|_{L^2\II} \leq c \delta \min(\gamma^{-1}, t^{-\alpha}) \quad \forall~t\in [0,T],
\end{equation*}
where the generic constant $c$ is independent of $\gamma$, $\delta$, $h$ and $t$.
\end{corollary}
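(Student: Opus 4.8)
The plan is to reduce the estimate to a direct application of Lemma \ref{lem:tildeuh-0}, exploiting the fact that the two semidiscrete backward solutions differ only through their terminal data. First I would subtract the solution representations \eref{eqn:semi-sol-1} and \eref{eqn:semi-sol-2}. Since both $\tilde u_h^\delta$ and $\tilde u_h$ are produced by applying the \emph{same} operator $F_h(t)\big(\gamma I + F_h(T)\big)^{-1}$ to the projected terminal data, linearity of all the operators involved gives
\begin{equation*}
(\tilde u_h^\delta - \tilde u_h)(t) = F_h(t)\big(\gamma I + F_h(T)\big)^{-1} P_h(g_\delta - g).
\end{equation*}

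Next I would take the $L^2\II$ norm of both sides and invoke Lemma \ref{lem:tildeuh-0} with $v = P_h(g_\delta - g) \in X_h$, which yields
\begin{equation*}
\|(\tilde u_h^\delta - \tilde u_h)(t)\|_{L^2\II} \le c\,\min(\gamma^{-1}, t^{-\alpha})\,\|P_h(g_\delta - g)\|_{L^2\II}.
\end{equation*}
To finish, I would control the remaining factor using the fact that the $L^2(\Omega)$ projection $P_h$ is a contraction in the $L^2$ norm, so that $\|P_h(g_\delta - g)\|_{L^2\II} \le \|g_\delta - g\|_{L^2\II}$, and then apply the noise assumption $\|g_\delta - g\|_{L^2\II} \le \delta$. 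Combining the three displays gives precisely the claimed bound $\|(\tilde u_h^\delta - \tilde u_h)(t)\|_{L^2\II} \le c\,\delta\,\min(\gamma^{-1}, t^{-\alpha})$, uniformly in $t \in [0,T]$, with $c$ inheriting its independence of $h$, $\gamma$, $\delta$ and $t$ directly from Lemma \ref{lem:tildeuh-0}.

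Since every ingredient is already established, there is no substantial obstacle here; the argument is purely a matter of combining linearity, the operator bound of Lemma \ref{lem:tildeuh-0}, and the stability of $P_h$. The only point deserving a moment's care is the endpoint $t=0$, where $F_h(0)=I$ and the factor $\min(\gamma^{-1},t^{-\alpha})$ collapses to its $\gamma^{-1}$ branch; this is consistent with the statement of Lemma \ref{lem:tildeuh-0}, whose $\gamma^{-1}$ bound rests only on $0 \le E_{\al,1}(-z) \le 1$ and therefore remains valid at $t=0$.
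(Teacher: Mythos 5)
Your proposal is correct and is exactly the argument the paper intends: the paper presents this corollary as an immediate consequence of Lemma \ref{lem:tildeuh-0} combined with the representations \eref{eqn:semi-sol-1} and \eref{eqn:semi-sol-2}, which is precisely your subtraction, operator bound, $L^2$-stability of $P_h$, and noise assumption. Your remark about the endpoint $t=0$ is a sensible sanity check but adds nothing beyond what the lemma already covers.
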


Next, we shall derive a bound of $\tilde{u}_h -\tilde{u}$.
\begin{lemma}\label{lem:err-smooth}
Assume that $u_0\in \dH2$. Let $\tilde{u}$ be the solution to the regularized backward subdiffusion problem \eref{eqn:tildeu},
and $\tilde{u}_h$ be the solution to the corresponding semidiscrete problem \eref{DBWP-1}. Then there holds
%\begin{equation*}
%\|(\tilde{u}_h -\tilde{u})(0) \|_{L^2\II} \leq ch^2\ell_h \gamma^{-2} \| u_0 \|_{\dH 2} \quad \forall~~t\in(0,T],
%\end{equation*}
%Moreover,
\begin{equation*}
\|(\tilde{u}_h -\tilde{u})(t) \|_{L^2\II} \leq ch^2\min(\gamma^{-1},t^{-\alpha}) \| u_0 \|_{\dH 2} \quad \forall~t\in [0,T],
\end{equation*}
where $c$ might depend on $T$, but is always independent of $h$, $\gamma$ and $t$.
\end{lemma}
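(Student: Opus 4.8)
The plan is to estimate the semidiscrete error $\tilde u_h - \tilde u$ by splitting it into a part that involves the difference between the discrete and continuous solution operators applied to a common initial value, and a part that captures the error in the initial value itself. Writing $\tilde u(t) = F(t)\tilde u(0)$ with $\tilde u(0) = (\gamma I + F(T))^{-1}(F(T)u_0)$ and $\tilde u_h(t) = F_h(t)\tilde u_h(0)$ with $\tilde u_h(0) = (\gamma I + F_h(T))^{-1}P_h(F(T)u_0)$, I would first reduce the problem to comparing the two backward solution operators. The cleanest route is to introduce an intermediate quantity, the fully-continuous-initial-value fed through the discrete operator, and write
\begin{equation*}
\tilde u_h(t) - \tilde u(t) = \big(F_h(t)(\gamma I + F_h(T))^{-1}P_h - F(t)(\gamma I + F(T))^{-1}\big) g,
\end{equation*}
where $g = F(T)u_0$ lives in $\dH{2}$ (since $u_0 \in \dH{2}$ and $F(T)$ maps $\dH{2}$ to $\dH{2}$ boundedly).

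Next I would exploit the fact that the direct semidiscrete problem already has a standard error estimate. The key observation is that the operator $F_h(t)(\gamma I + F_h(T))^{-1}$ can be viewed as a discrete solution operator for a forward subdiffusion problem with a modified (regularized) initial condition, so the error between $\tilde u_h$ and $\tilde u$ should inherit the $O(h^2)$ spatial convergence rate of the FEM for the direct problem, modulated by the smoothing/regularizing factor $\min(\gamma^{-1}, t^{-\alpha})$ supplied by Lemma~\ref{lem:tildeu-0} and Lemma~\ref{lem:tildeuh-0}. Concretely, I would write $\tilde u_h(t) - \tilde u(t)$ as a sum of three terms: the error coming from $(\gamma I + F_h(T))^{-1} - (\gamma I + F(T))^{-1}$ via the resolvent identity
\begin{equation*}
(\gamma I + F_h(T))^{-1}P_h - (\gamma I + F(T))^{-1} = (\gamma I + F_h(T))^{-1}\big(P_h(\gamma I + F(T)) - (\gamma I + F_h(T))P_h\big)(\gamma I + F(T))^{-1},
\end{equation*}
plus the error from $F_h(t) - F(t)$ acting on the regularized data. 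The regularized resolvents are uniformly bounded by $\gamma^{-1}$, while the operator differences $F_h(T) - F(T)$ and $F_h(t)P_h - F(t)$ carry the $O(h^2)$ FEM error.

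The main obstacle, and the step that requires care, will be establishing the error estimate for the difference of the discrete and continuous backward solution operators with the correct weight $\min(\gamma^{-1}, t^{-\alpha})$ rather than a cruder bound that only yields $\gamma^{-1}$ or only yields $t^{-\alpha}$. The $\gamma^{-1}$ bound is routine since each regularized resolvent is bounded by $\gamma^{-1}$ and the direct FEM error is $O(h^2)$; the delicate part is recovering the $t^{-\alpha}$ decay for $t$ bounded away from zero, which must come from the smoothing of $F(t)$ and $F_h(t)$ for positive $t$ combined with the lower bound on $E_{\al,1}(-\lambda T^\al)$ from Lemma~\ref{lem1}. I would handle this spectrally whenever possible, but since $F(t)$ and $F_h(t)$ act on different eigenbases one cannot diagonalize simultaneously; hence the argument must route through the direct-problem error estimate for $F_h(t)P_h - F(t)$ in the $\dH{p}$-to-$L^2$ scale (the nonstandard estimates cited from \cite{JinLazarovZhou:SIAM2013, JinLiZhou:2017sisc, JinLiZhou:var}), together with the uniform resolvent bounds. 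Balancing the powers of $t$ and $\gamma$ so that the product of the smoothing factor and the FEM error collapses to exactly $h^2 \min(\gamma^{-1}, t^{-\alpha})\|u_0\|_{\dH2}$ is where the technical weight of the proof lies.
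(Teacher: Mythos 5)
Your plan takes a genuinely different route from the paper, and the route is workable, but as written it stops exactly at the point that constitutes the lemma's content. The paper never forms a resolvent identity: it uses the classical Ritz--projection splitting $\tilde{u}_h-\tilde{u}=(\tilde{u}_h-R_h\tilde{u})+(R_h\tilde{u}-\tilde{u})=:\zeta+\rho$, observes via $\Delta_hR_h=P_h\Delta$ that $\zeta$ solves a semidiscrete problem with source $-P_h\Dal\rho$ and quasi-boundary data $\gamma\zeta(0)+\zeta(T)=(P_h-R_h)g$, solves that condition for $\zeta(0)$, and bounds the resulting three terms using Lemma \ref{lem:tildeuh-0}, the smoothing of $E_h$ (Lemma \ref{lem4}), and $\|\Dal\tilde{u}(s)\|_{\dH{2}}\le cs^{-\al}\|u_0\|_{\dH{2}}$. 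Your proposal instead reuses direct-problem FEM estimates as black boxes, which is legitimate and arguably shorter; the gap is that you declare the production of the weight $\min(\gamma^{-1},t^{-\al})$ to be ``where the technical weight of the proof lies'' and never produce it, so what is actually proved are only the crude $\gamma^{-1}$- or $t^{-\al}$-type bounds you yourself concede are insufficient. Moreover your diagnosis of the difficulty is off: on $[0,T]$ the factor $t^{-\al}$ is bounded below by $T^{-\al}$, so the $t^{-\al}$ decay never needs to be ``recovered''; the real content of the $\min$ is the cap $\gamma^{-1}$ as $t\to0$, and that comes from a damping property of the regularized initial value which your proposal does not identify. (Also, your commutator identity is misstated: its left side should be $(\gamma I+F_h(T))^{-1}P_h-P_h(\gamma I+F(T))^{-1}$, with $P_h$ in front of the continuous resolvent.)

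Here is how your argument closes, with the ingredients you name plus the missing observation. With $\tilde{u}(0)=(\gamma I+F(T))^{-1}F(T)u_0$, the corrected identity gives
\begin{equation*}
\tilde{u}_h(t)-\tilde{u}(t)=F_h(t)\big(\gamma I+F_h(T)\big)^{-1}\big[P_hF(T)-F_h(T)P_h\big]\tilde{u}(0)+\big[F_h(t)P_h-F(t)\big]\tilde{u}(0)=:A+B.
\end{equation*}
The key fact is spectral: on each mode the multiplier of $(\gamma I+F(T))^{-1}F(T)$ equals $E_{\al,1}(-\lambda_jT^\al)/(\gamma+E_{\al,1}(-\lambda_jT^\al))$, which is at most $1$ \emph{and} at most $\gamma^{-1}$ (since $E_{\al,1}\le1$); hence $\|\tilde{u}(0)\|_{\dH{2}}\le\min(1,\gamma^{-1})\,\|u_0\|_{\dH{2}}$. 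For $B$, the smooth-data direct estimate $\|F_h(t)P_hw-F(t)w\|_{L^2\II}\le ch^2\|w\|_{\dH{2}}$ (uniform in $t$) then yields $\|B\|_{L^2\II}\le ch^2\min(1,\gamma^{-1})\|u_0\|_{\dH{2}}\le c_Th^2\min(\gamma^{-1},t^{-\al})\|u_0\|_{\dH{2}}$, where the last step is the elementary inequality $\min(1,\gamma^{-1})\le\max(1,T^\al)\min(\gamma^{-1},t^{-\al})$ valid for $t\in[0,T]$. For $A$, Lemma \ref{lem:tildeuh-0} supplies the full factor $\min(\gamma^{-1},t^{-\al})$ outright, and the bracket is bounded by $\|(P_h-I)F(T)\tilde{u}(0)\|_{L^2\II}+\|F(T)\tilde{u}(0)-F_h(T)P_h\tilde{u}(0)\|_{L^2\II}\le ch^2\|\tilde{u}(0)\|_{\dH{2}}\le ch^2\|u_0\|_{\dH{2}}$, using \eref{eqn:err-Ph}, $\|F(T)w\|_{\dH{2}}\le\|w\|_{\dH{2}}$, and the same smooth-data direct estimate; no nonsmooth-data machinery or regime splitting is needed anywhere. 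Completed this way, your proof is shorter than the paper's and is uniform in $\gamma$ (the paper's final absorption of $\rho$ and $I_3$, which are bounded by $ch^2$ alone, tacitly uses an upper bound on $\gamma$); what the paper's Ritz-projection argument buys in exchange is self-containedness and a template that carries over directly to the nonsmooth case of Lemma \ref{lem:err-nonsmooth}.
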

\begin{proof}
%Let $R_h$ be the Ritz projection defined in \eref{eqn:proj}.
We split $\tilde{u}_h(t)-\tilde{u}(t)$ into two components such that
\begin{equation*}
\tilde{u}_h(t)-\tilde{u}(t)=(\tilde{u}_h(t)-R_h\tilde{u}(t))+(R_h\tilde{u}(t)-\tilde{u}(t))=: { \zeta(t)} + \rho(t),
\end{equation*}
By the approximation property of the Ritz projection in \eref{eqn:err-Rh}, we have
%\begin{equation*}
% \|\rho(t)\|_{L^2\II} \le c h^2 \|  \tilde{u}(t) \|_{\dH2}.
%\end{equation*}
%Then the solution representation \eref{eqn:sol-back-tilde} and Lemma \ref{lem:tildeu-0} (with $t=T$) imply $\|  \tilde{u}(t) \|_{\dH2}\le c\| u_0 \|_{\dH2}$,
%and therefore
\begin{equation}\label{eqn:rho-bound-1}
 \| \rho(t)\|_{L^2\II} \le c h^2 \|  \tilde{u}(t) \|_{\dH2} \le c h^2  \| u_0 \|_{\dH2} .
\end{equation}
where the last inequality follows from  \eref{eqn:sol-back-tilde} and Lemma \ref{lem:tildeu-0} (with $t=T$).

Now we turn to the bound of $\zeta  = \tilde{u}_h - R_h\tilde{u}$, where $\tilde{u}_h$ and $R_h\tilde{u}$ satisfy
\begin{equation*}
\gamma \tilde{u}_h(0)+\tilde{u}_h(T)=P_h g \qquad\mbox{and}\qquad
\gamma R_h\tilde{u}(0)+R_h\tilde{u}(T)=R_hg,
\end{equation*}
respectively. By noting the fact $\Delta_h R_h=P_h\Delta$, we have
\begin{equation}\label{eqn:err-0}
\Dal\zeta(t)-\Delta_h\zeta(t) = -P_h \Dal\rho(t) \quad\mbox{with}\quad \gamma\zeta(0)+\zeta(T) = (P_h-R_h) g.
\end{equation}
Then we arrive at
\begin{equation*}
\zeta(T)=F_h(T)\zeta(0)-\int_0^T E_h(T-s)P_h\partial_s^\al\rho(s)ds.
\end{equation*}
We add $\gamma\zeta(0)$ at both sides of the equation and use  \eref{eqn:err-0} to derive that
\begin{equation*}
 P_hg-R_hg=(\gamma I+F_h(T))\zeta(0)-\int_0^T E_h(T-s)P_h\partial_s^\al\rho(s)ds,
\end{equation*}
and therefore
\begin{eqnarray*}
 \zeta(t) &= F_h(t)\big(\gamma I+F_h(T)\big)^{-1}\Big[{(P_h-R_h)}g +\int_0^T E_h(T-s)P_h\partial_s^\al\rho(s)ds\Big]\\
 &\qquad
{ - \int_0^t E_h(t-s)P_h\partial_s^\al\rho(s)ds} \\&=:I_1+I_2+I_3.
\end{eqnarray*}
The properties  \eref{eqn:err-Ph} and  \eref{eqn:err-Rh}, and Lemma \ref{lem:tildeuh-0} lead to the estimate that
\begin{eqnarray*}
 \| I _1 \|_{L^2\II} &\le c \min(\gamma^{-1},t^{-\alpha}) \|{(P_h-R_h)}g\|_{L^2\II} \le c h^2 \min(\gamma^{-1},t^{-\alpha}) \| g\|_{\dH2}\\ &\le c h^2 \min(\gamma^{-1},t^{-\alpha}) \| u_0 \|_{\dH2}.
\end{eqnarray*}
The last inequality is the direct result of the solution regularity in Theorem \ref{thm:reg-u}.
Similarly, we apply Lemmas \ref{lem4} and \ref{lem:tildeuh-0}, and stability of $L^2$ projection $P_h$ to arrive at
\begin{equation*}
\| I_2 \|_{L^2\II} \le c \min(\gamma^{-1},t^{-\alpha})   \int_0^T (T-s)^{\alpha-1} \| \partial_s^\al\rho(s)\|_{L^2\II} ds  .
\end{equation*}
Then \eref{eqn:err-Rh} and the solution regularity in Theorem \ref{thm:reg-u} immediately imply that
\begin{eqnarray*}
\| I_2 \|_{L^2\II}&\le c h^2 \min(\gamma^{-1},t^{-\alpha}) \int_0^T (T-s)^{\alpha-1} \| \partial_s^\al u(s) \|_{\dH2}\,ds\\
 &\le c h^2 \min(\gamma^{-1},t^{-\alpha}) \int_0^T (T-s)^{\alpha-1} s^{-\alpha}\,ds\|  u_0 \|_{\dH2}\\
 &\le c h^2 \min(\gamma^{-1},t^{-\alpha})\|  u_0 \|_{\dH2}.
\end{eqnarray*}
Similar argument also leads to a bound of the term $I_3$:
\begin{eqnarray*}
\| I_3\|_{L^2\II} &\le c h^2   \int_0^T (T-s)^{\alpha-1} \| \partial_s^\al u(s) \|_{\dH2}\,ds\\
 &\le c h^2 \int_0^T (T-s)^{\alpha-1} s^{-\alpha}\,ds\|  u_0 \|_{\dH2}\le c h^2  \|  u_0 \|_{\dH2}.
\end{eqnarray*}
As a result, we arrive at the desired estimate.
\end{proof}

Then, Lemmas \ref{lem:err-tildeu} and \ref{lem:err-smooth} and Corollary \ref{cor:err-2} together lead to the following theorem which providing an error estimate of the numerical solution $\widetilde u_h^\delta$,
in case of smooth initial data, i.e., $u_0\in D(\Delta)=\dH2$.

\begin{theorem}\label{thm:err-smooth}
Assume that $u_0\in \dH2$.
Let $u$ be the solution to the problem \eref{eqn:fde} and $\widetilde u_h^\delta$ be the solution to the (regularized)  semidiscrete problem \eref{DBWP}.
Then there holds
\begin{equation*}
\|\tildu_h(t)-u(t)\|_{L^2\II}\le c \big(\gamma +(h^2 + \delta)\min(\gamma^{-1},t^{-\alpha})\big)\quad \forall~t\in [0,T],
\end{equation*}
where $c$ might depend   on $T$ and $u_0$, but is always independent of  $h$, $\gamma$, $\delta$ and $t$.
\end{theorem}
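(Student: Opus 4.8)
The plan is to control the full error $\tilde u_h^\delta(t)-u(t)$ by inserting the two natural intermediate functions, namely the continuous regularized solution $\tilde u(t)$ of \eref{eqn:tildeu} and the semidiscrete regularized solution $\tilde u_h(t)$ of \eref{DBWP-1} (which carries the exact projected terminal data $P_h g$ rather than the noisy $P_h g_\delta$), and then invoke the triangle inequality:
\begin{equation*}
\tilde u_h^\delta(t)-u(t)=\big(\tilde u_h^\delta(t)-\tilde u_h(t)\big)+\big(\tilde u_h(t)-\tilde u(t)\big)+\big(\tilde u(t)-u(t)\big).
\end{equation*}
This splitting is engineered so that each difference isolates exactly one source of error: the first captures the data noise, the second the finite element (spatial) discretization, and the third the regularization. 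Since all three have already been estimated in the preceding results, the proof reduces to collecting and adding those bounds.

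First I would bound the noise term by Corollary \ref{cor:err-2}, which gives $\|\tilde u_h^\delta(t)-\tilde u_h(t)\|_{L^2\II}\le c\delta\min(\gamma^{-1},t^{-\alpha})$. Next, using the smoothness hypothesis $u_0\in\dH2$, Lemma \ref{lem:err-smooth} supplies the discretization bound $\|\tilde u_h(t)-\tilde u(t)\|_{L^2\II}\le ch^2\min(\gamma^{-1},t^{-\alpha})\|u_0\|_{\dH2}$. Finally, for the regularization term I would apply Lemma \ref{lem:err-tildeu} with the extreme choice $q=2$: at $t=0$ this directly yields $c\gamma\|u_0\|_{\dH2}$, while for $t\in(0,T)$ the temporal exponent $-(1-q/2)\alpha$ vanishes when $q=2$, so the bound collapses to $c\gamma\|u_0\|_{\dH2}$ uniformly in $t$. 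Summing the three contributions and absorbing the fixed factor $\|u_0\|_{\dH2}$ into the generic constant $c$ (which the statement already allows to depend on $u_0$) produces the asserted estimate $c\big(\gamma+(h^2+\delta)\min(\gamma^{-1},t^{-\alpha})\big)$; one checks the formula is also valid at $t=0$, where $\min(\gamma^{-1},t^{-\alpha})=\gamma^{-1}$.

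The step deserving the most care is the regularization term, and it is the only place where the data regularity genuinely enters. Precisely the admissibility of $q=2$ in Lemma \ref{lem:err-tildeu} removes the otherwise singular factor $t^{-(1-q/2)\alpha}$ and delivers a regularization error that is $O(\gamma)$ \emph{uniformly} on all of $[0,T]$, including at $t=0$; with rougher data this contribution would blow up as $t\to0^+$, which is exactly the degradation seen in the nonsmooth-data analysis elsewhere in the paper. Beyond correctly matching terminal data through the chain of intermediate functions, no genuine obstacle remains: Corollary \ref{cor:err-2}, Lemma \ref{lem:err-smooth} and Lemma \ref{lem:err-tildeu} already carry all of the analytic weight, so the theorem is essentially an assembly of these pieces via the triangle inequality.
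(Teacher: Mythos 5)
Your proposal is correct and matches the paper's own argument exactly: the paper states that Theorem \ref{thm:err-smooth} follows by combining Lemma \ref{lem:err-tildeu}, Lemma \ref{lem:err-smooth}, and Corollary \ref{cor:err-2}, which is precisely your three-way triangle-inequality splitting through $\tilde u$ and $\tilde u_h$. Your observation that the choice $q=2$ in Lemma \ref{lem:err-tildeu} is what makes the regularization term $O(\gamma)$ uniformly on $[0,T]$ is also the correct reading of where the smoothness hypothesis enters.
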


{
\begin{remark}\label{rem:rate-semi-smooth}
The error estimate in Theorem \ref{thm:err-smooth} is useful, since
it specifies the scale to balance the discrization
error, regularization parameter and noise level. For example, if we decide
the \textit{a priori} choice of parameters: $h = O(\sqrt\delta)$ and $\gamma= O(\sqrt\delta)$, then there holds
$$ \| \tilde{u}_h^\delta(0) - u_0   \|_{L^2\II} \le c\sqrt{\delta}.  $$
%by the \textit{a priori} choice of parameters: $h = O(\sqrt\delta)$ and $\gamma= O(\sqrt\delta)$.
On the other hand, for any $t>0$, we have
$$ \| \tilde{u}_h^\delta(t) - u(t)   \|_{L^2\II} \le c\delta t^{-\alpha},  $$
by the \textit{a priori} choice of parameters: $h=O(\sqrt\delta)$ and $ \gamma ={O(\delta)}$. This is the first study of the discretized problem,
and the result is consistent with the estimate in the continuous level, see e.g. \cite[Theorem 3.4]{YangLiu:2013}.
The analysis relies heavily on the nonstandard error estimate for the direct problem in terms of problem data regularity
\cite{JinLazarovZhou:SIAM2013}.
%As far as we know, this is the first work providing rigorous error analysis of numerical
%methods for solving the time-fractional backward problem.
\end{remark}}

Next, we shall consider the worse case that $u_0\in L^2\II$.
\begin{lemma}\label{lem:err-nonsmooth}
Assume that $u_0\in L^2\II$. Let $\tilde{u}$ be the solution to the regularized backward subdiffusion problem \eref{eqn:tildeu},
and $\tilde{u}_h$ be the solution to the corresponding semidiscrete problem \eref{DBWP-1}. Then there holds for all $t\in[0,T]$ and $\ell_h=\max(1,|\ln h|)$
%\begin{equation*}
%\|(\tilde{u}_h -\tilde{u})(0) \|_{L^2\II} \leq ch^2\ell_h \gamma^{-2} \| u_0 \|_{\dH 2} \quad \forall~~t\in(0,T],
%\end{equation*}
%Moreover,
\begin{equation*}
\|(\tilde{u}_h -\tilde{u})(t) \|_{L^2\II} \leq  c \gamma^{-1}\min(\gamma^{-1},t^{-\alpha}) h^{2} \ell_h \| u_0 \|_{L^2\II},
\end{equation*}
where the constant $c$ might depend on $T$, but is always independent of $h$, $\gamma$ and $t$.
\end{lemma}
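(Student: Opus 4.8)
The plan is to follow the proof of Lemma \ref{lem:err-smooth} almost verbatim, the only genuinely new input being a regularity estimate for the regularized backward solution $\tilde u$ valid under the weaker hypothesis $u_0\in L^2\II$. First I would reuse the Ritz splitting $\tilde{u}_h(t)-\tilde{u}(t)=\zeta(t)+\rho(t)$ with $\zeta=\tilde u_h-R_h\tilde u$ and $\rho=R_h\tilde u-\tilde u$. Exactly as before, $\zeta$ solves $\Dal\zeta-\Delta_h\zeta=-P_h\Dal\rho$ with $\gamma\zeta(0)+\zeta(T)=(P_h-R_h)g$, so Duhamel's principle and the algebra already carried out in the proof of Lemma \ref{lem:err-smooth} give the same decomposition $\zeta=I_1+I_2+I_3$, where $I_1=F_h(t)(\gamma I+F_h(T))^{-1}(P_h-R_h)g$ and $I_2,I_3$ are the two convolution terms built from $E_h$ and $P_h\Dal\rho$. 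The task thus reduces entirely to re-estimating $\rho$, $\Dal\rho$ and $(P_h-R_h)g$ for $u_0\in L^2\II$.

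The heart of the matter is that, in contrast to the smooth case, $\tilde u$ acquires its spatial smoothness only through the regularization. I would first record the spectral bound $\|\tilde u(0)\|_{\dH2}\le c\gamma^{-1}\|u_0\|_{L^2\II}$: by \eref{eqn:sol-back-tilde} the $\fy_j$-coefficient of $\tilde u(0)$ equals $E_{\al,1}(-\la_jT^\al)/(\gamma+E_{\al,1}(-\la_jT^\al))$ times $(u_0,\fy_j)$, and Lemma \ref{lem1} gives $\la_j\,E_{\al,1}(-\la_jT^\al)/(\gamma+E_{\al,1}(-\la_jT^\al))\le\gamma^{-1}\la_j E_{\al,1}(-\la_jT^\al)\le c_T\gamma^{-1}$, i.e. the regularization acts as a spectral cut-off at $\la_j\sim\gamma^{-1}$. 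Combining this with the $\dH2$-smoothing of $F(t)$ from Lemma \ref{lem:tildeu-0} (applied to $g=F(T)u_0$) yields $\|\tilde u(t)\|_{\dH2}\le c\min(\gamma^{-1},t^{-\al})\|u_0\|_{L^2\II}$; and since $\tilde u$ solves \eref{eqn:tildeu}, one has $\Dal\tilde u(s)=\Delta\tilde u(s)=F(s)\Delta\tilde u(0)$, so the smoothing property in Theorem \ref{thm:reg-u}(i) (with $q=0$, $p=2$, $m=0$) gives the time-weighted bound $\|\Dal\tilde u(s)\|_{\dH2}\le c\,s^{-\al}\|\tilde u(0)\|_{\dH2}\le c\gamma^{-1}s^{-\al}\|u_0\|_{L^2\II}$. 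These are exactly the bounds used in Lemma \ref{lem:err-smooth}, with $\|u_0\|_{\dH2}$ replaced by $\gamma^{-1}\|u_0\|_{L^2\II}$.

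With these in hand the remaining steps are identical in form. The Ritz estimate \eref{eqn:err-Rh} gives $\|\rho(t)\|_{L^2\II}\le ch^2\|\tilde u(t)\|_{\dH2}$ and $\|\Dal\rho(s)\|_{L^2\II}\le ch^2\|\Dal\tilde u(s)\|_{\dH2}$. For $I_1$ the approximation properties \eref{eqn:err-Ph}--\eref{eqn:err-Rh} and Lemma \ref{lem:tildeuh-0} give $\|I_1\|_{L^2\II}\le c\min(\gamma^{-1},t^{-\al})h^2\|g\|_{\dH2}\le c\min(\gamma^{-1},t^{-\al})h^2\|u_0\|_{L^2\II}$, already dominated by the claimed right-hand side. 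For $I_2,I_3$ I would apply Lemma \ref{lem4} with $q=0$, the $L^2$-stability of $P_h$, and Lemma \ref{lem:tildeuh-0}, reducing both to the integral $\int_0^T(T-s)^{\al-1}\|\Dal\rho(s)\|_{L^2\II}\,ds$; inserting the bound on $\|\Dal\rho\|$ leaves the scalar integral $\int_0^T(T-s)^{\al-1}s^{-\al}\,ds=B(\al,1-\al)<\infty$, whence $\|I_2\|_{L^2\II}\le c\gamma^{-1}\min(\gamma^{-1},t^{-\al})h^2\|u_0\|_{L^2\II}$ and $\|I_3\|_{L^2\II}\le c\gamma^{-1}h^2\|u_0\|_{L^2\II}$. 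Collecting the four contributions, the dominant one being $I_2$, gives $c\gamma^{-1}\min(\gamma^{-1},t^{-\al})h^2\|u_0\|_{L^2\II}$; since $\ell_h\ge1$ this is bounded by the asserted right-hand side, the factor $\ell_h$ being retained for consistency with the finite element error estimates used in the later fully discrete analysis.

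I expect the main obstacle to be the regularity step, specifically the reduced smoothing of the subdiffusion operator. Because $E_{\al,1}(-\la t^\al)$ decays only like $\la^{-1}$ (Lemma \ref{lem1}), $F(t)$ gains merely two spatial derivatives, so the second-order Ritz estimate for $\Dal\rho$ genuinely requires $\tilde u(s)\in\dH4$; this regularity is unavailable from $u_0\in L^2\II$ alone and must be manufactured by balancing the algebraic Mittag-Leffler decay against the effective spectral truncation at $\la\sim\gamma^{-1}$ supplied by the regularization, which is precisely the origin of the factor $\gamma^{-1}$. The accompanying subtlety is that the resulting bound $\|\Dal\tilde u(s)\|_{\dH2}\le c\gamma^{-1}s^{-\al}$ is singular as $s\to0$, so one must check that this singularity stays integrable against the memory kernel $(T-s)^{\al-1}$ --- which it does, the two exponents combining into the finite Beta integral above.
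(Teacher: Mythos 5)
Your proof is correct, but it is not the route the paper takes, and the difference is instructive. The paper does \emph{not} reuse the Ritz splitting of Lemma \ref{lem:err-smooth}: it switches to the $L^2$-projection splitting $\zeta = \tilde u_h - P_h\tilde u$, $\rho = P_h\tilde u - \tilde u$, precisely so that the error equation for $\zeta$ becomes $\Dal\zeta-\Delta_h\zeta=\Delta_h(P_h-R_h)\tilde u$ with the \emph{homogeneous} side condition $\gamma\zeta(0)+\zeta(T)=0$; the source then involves no time derivative of $\tilde u$, and only the regularity $\|\tilde u(s)\|_{\dH2}\le c\gamma^{-1}T^{-\alpha}\|u_0\|_{L^2\II}$ (the paper's \eref{eqn:est-01}) is needed. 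The price is that $\Delta_h$ sits in front of the non-smooth quantity $(P_h-R_h)\tilde u$, which the paper handles by the fractional smoothing $\|\Delta_h^\epsilon E_h(t)\|\le ct^{(1-\epsilon)\alpha-1}$ of Lemma \ref{lem4} together with an inverse inequality, producing a factor $h^{2-2\epsilon}\epsilon^{-1}$ that is optimized at $\epsilon=1/\ell_h$ — this is the origin of the logarithmic factor $\ell_h$ in the statement. You instead keep the Ritz splitting and supply the missing ingredient as extra regularity of $\tilde u$ itself: your bound $\|\Dal\tilde u(s)\|_{\dH2}=\|F(s)\Delta\tilde u(0)\|_{\dH2}\le cs^{-\alpha}\|\tilde u(0)\|_{\dH2}\le c\gamma^{-1}s^{-\alpha}T^{-\alpha}\|u_0\|_{L^2\II}$ is valid (the spectral multiplier $\lambda_j E_{\alpha,1}(-\lambda_jT^\alpha)/(\gamma+E_{\alpha,1}(-\lambda_jT^\alpha))\le c_T\gamma^{-1}$ is exactly the regularization-as-spectral-cutoff mechanism, and the $s^{-\alpha}$ singularity is absorbed by the Beta integral $\int_0^T(T-s)^{\alpha-1}s^{-\alpha}\,ds$), so all three terms $I_1,I_2,I_3$ close. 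Your argument therefore yields the bound $c\gamma^{-1}\min(\gamma^{-1},t^{-\alpha})h^2\|u_0\|_{L^2\II}$ \emph{without} the factor $\ell_h$, which is slightly sharper than the stated lemma (and implies it, since $\ell_h\ge1$). Two small points to keep honest: (i) your $\rho$- and $I_3$-bounds lack the factor $\min(\gamma^{-1},t^{-\alpha})$ and are absorbed into the final estimate only because $\min(\gamma^{-1},t^{-\alpha})\ge\min(\gamma^{-1},T^{-\alpha})$ is bounded below when $\gamma\le T^{\alpha}$ — but the paper's own proof (and its Lemma \ref{lem:err-smooth}) relies on the same implicit normalization; (ii) the identity $\Dal\rho=(R_h-I)\Dal\tilde u$ and membership $\Delta\tilde u(s)\in\dH2$ should be read in the spectral sense of the $\dH{q}$ spaces, which your representation $F(s)\Delta\tilde u(0)$ makes legitimate.
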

\begin{proof}
By using the $L^2$-projection $P_h$, we split
 $\tilde{u}_h(t)-\tilde{u}(t)$ into two components:% such that
\begin{equation*}
\tilde{u}_h(t)-\tilde{u}(t)=(\tilde{u}_h(t)-P_h\tilde{u}(t))+(P_h\tilde{u}(t)-\tilde{u}(t))=: \zeta(t) + \rho(t),
\end{equation*}
By the approximation property of the $L^2$-projection in \eref{eqn:err-Rh}, we have
\begin{equation*}
 \|\rho(t)\|_{L^2\II} \le c h^2 \|  \tilde{u}(t) \|_{\dH{2}}\le c_T h^2 \gamma^{-1} \| u_0 \|_{L^2\II},
\end{equation*}
where the last inequality follows from
the solution representation \eref{eqn:sol-back-tilde}, Lemma \ref{lem:tildeu-0}  and Theorem \ref{thm:reg-u}, such that
\begin{equation}\label{eqn:est-01}
\|  \tilde{u}(t) \|_{\dH{2}}\le c \gamma^{-1} \| F(T) u_0 \|_{\dH{2}} \le c \gamma^{-1} T^{-\alpha} \| u_0 \|_{L^2\II}.
\end{equation}

Now we turn to the bound of $\zeta  = \tilde{u}_h - P_h\tilde{u}$, where $\tilde{u}_h$ and $P_h\tilde{u}$ satisfy
\begin{equation*}
\gamma \tilde{u}_h(0)+\tilde{u}_h(T)=P_h g \qquad\mbox{and}\qquad
\gamma P_h\tilde{u}(0)+ P_h\tilde{u}(T)=P_hg,
\end{equation*}
respectively. By noting the fact $\Delta_h R_h=P_h\Delta$, we have
\begin{equation}\label{eqn:err-1}
\Dal\zeta(t)-\Delta_h\zeta(t) = \Delta_h(P_h-R_h)  \widetilde u(t) \quad\mbox{with}\quad \gamma\zeta(0)+\zeta(T) = 0.
\end{equation}
Then we arrive at
\begin{equation*}
\zeta(T)=F_h(T)\zeta(0)+\int_0^T E_h(T-s)\Delta_h(P_h-R_h)  \widetilde u(s)ds.
\end{equation*}
We add $\gamma\zeta(0)$ at both sides of the equation and derive that
\begin{equation*}
\zeta(0) = -(\gamma I+F_h(T))^{-1}\int_0^T E_h(T-s) \Delta_h (P_h -R_h) \widetilde u(s)ds,
\end{equation*}
and hence
\begin{eqnarray*}
\zeta(t) &=F_h(t)\zeta(0)  +\int_0^t E_h(t-s)\Delta_h(P_h-R_h)  \widetilde u(s)ds\\
& = -F_h(t) \big(\gamma I+F_h(T)\big)^{-1} \int_0^T E_h(T-s) \Delta_h (P_h -R_h) \widetilde u(s)ds\\
&\qquad { + \int_0^t E_h(t-s)\Delta_h(P_h-R_h)  \widetilde u(s)ds}\\
& =: I_1 + I_2.
\end{eqnarray*}
Similarly, we apply Lemmas \ref{lem4} and \ref{lem:tildeuh-0}, to arrive at
\begin{eqnarray*}
\| I_1 \|_{L^2\II} &\le c   \min(\gamma^{-1},t^{-\alpha})   \int_0^T (T-s)^{\alpha\epsilon-1} \| \Delta_h^\epsilon(P_h -R_h) \widetilde u(s) \|_{L^2\II}  ds\\
&\le  c  \min(\gamma^{-1},t^{-\alpha})  h^{-2\epsilon}  \int_0^T (T-s)^{\alpha\epsilon-1} \| (P_h -R_h)\widetilde u(s) \|_{L^2\II}  ds
\end{eqnarray*}
where we apply the inverse estimate for FEM functions in the second inequality.  The approximation properties \eref{eqn:err-Rh} and \eref{eqn:err-Ph} lead to
\begin{equation*}
\| I_1 \|_{L^2\II}  \le  c   \min(\gamma^{-1},t^{-\alpha})  h^{2-2\epsilon}  \int_0^T (T-s)^{\alpha\epsilon-1} \| \widetilde u(s) \|_{\dH2}  ds,
\end{equation*}
and then the regularity estimate of $\widetilde u$ in \eref{eqn:est-01} implies that
\begin{eqnarray*}
\| I_1 \|_{L^2\II} &\le  c   \gamma^{-1}\min(\gamma^{-1},t^{-\alpha})  h^{2-2\epsilon}  \int_0^T (T-s)^{\alpha\epsilon-1} T^{-\alpha}  ds  \| u_0 \|_{L^2\II}\\
&\le c\gamma^{-1}\min(\gamma^{-1},t^{-\alpha})  h^{2-2\epsilon} \epsilon^{-1} \| u_0 \|_{L^2\II}.
\end{eqnarray*}
Similar argument also leads to a bound of the term $I_2$:
\begin{eqnarray*}
\| I_2 \|_{L^2\II} &\le  c h^{2-2\epsilon} \epsilon^{-1} \| u_0 \|_{L^2\II}.
\end{eqnarray*}
Then the desired assertion follows immediately by choosing $\epsilon=1/\ell_h$.
%Then \eref{eqn:err-Rh} and the solution regularity in Theorem \ref{thm:reg-u} immediately imply that
%\begin{equation*}
%\| \theta(t) \|_{L^2\II} \le   c   \gamma^{-1}\min(\gamma^{-1},t^{-\alpha}) h^{2} \ell_h \| u_0 \|_{L^2\II}.
%\end{equation*}
%As a result, we arrive at the desired estimate.
\end{proof}

Then, Lemmas \ref{lem:err-tildeu} and \ref{lem:err-nonsmooth} and Corollary \ref{cor:err-2}
together lead to the following error estimate,
in case of nonsmooth initial data.
\begin{theorem}\label{thm:nonsmooth}
Assume that $u_0\in \dH{q}$ with $q\in[0,2]$.
Let $u$ be the solution to the problem \eref{eqn:fde} and $\widetilde u_h^\delta$ be the solution to the
(regularized) semidiscrete problem \eref{DBWP}.
Then there holds for all $t\in[0,T]$ and $\ell_h=\max(1,|\ln h|)$
\begin{equation*}
\hskip-0.7in\|\tildu_h(t)-u(t)\|_{L^2\II}\le c\Big( \min(\gamma^{q/2},\gamma t^{-(1-q/2)\alpha})
+(\gamma^{-(1-q/2)}h^2\ell_h^{1-q/2} + \delta)\min(\gamma^{-1},t^{-\alpha})\Big).
\end{equation*}
where the constant $c$ depends on $T$ and $u_0$, but is always independent of $h$, $\gamma$, $\delta$ and $t$.
\end{theorem}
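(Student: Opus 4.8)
The plan is to reduce everything to estimates already established by inserting the two intermediate solutions $\tilde u_h$ (the noise-free semidiscrete solution of \eref{DBWP-1}) and $\tilde u$ (the continuous regularized solution of \eref{eqn:tildeu}) and applying the triangle inequality,
\begin{equation*}
\tildu_h(t)-u(t)=\big(\tildu_h(t)-\tilde u_h(t)\big)+\big(\tilde u_h(t)-\tilde u(t)\big)+\big(\tilde u(t)-u(t)\big).
\end{equation*}
The first term is the propagation of the data noise through the semidiscrete solver and is bounded directly by Corollary \ref{cor:err-2}, giving $c\delta\min(\gamma^{-1},t^{-\alpha})$. The third term is the continuous regularization error: Lemma \ref{lem:err-tildeu} supplies $c\gamma^{q/2}\|u_0\|_{\dH q}$ at $t=0$ and $c\gamma t^{-(1-q/2)\alpha}\|u_0\|_{\dH q}$ for $t\in(0,T)$; since $F(t)$ is a contraction on $L^2\II$, the former in fact persists for every $t$ (because $\tilde u(t)-u(t)=-F(t)(\gamma I+F(T))^{-1}\gamma u_0$), so the two together produce the leading term $\min(\gamma^{q/2},\gamma t^{-(1-q/2)\alpha})$. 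Thus the real content lies in the middle term, the spatial discretization error of the regularized problem.

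For that middle term I would interpolate between the two endpoint cases already proved. Lemma \ref{lem:err-smooth} treats $q=2$ with bound $ch^2\min(\gamma^{-1},t^{-\alpha})\|u_0\|_{\dH2}$ (no logarithmic factor and no negative power of $\gamma$), while Lemma \ref{lem:err-nonsmooth} treats $q=0$ with bound $c\gamma^{-1}h^2\ell_h\min(\gamma^{-1},t^{-\alpha})\|u_0\|_{L^2\II}$. By the representations \eref{eqn:semi-sol-2} and \eref{eqn:sol-back-tilde}, for fixed $t,h,\gamma$ the map $u_0\mapsto(\tilde u_h-\tilde u)(t)$ is linear, hence a bounded operator $T$ both from $\dH0$ to $L^2\II$ and from $\dH2$ to $L^2\II$ with the operator norms read off above. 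Since $\{\dH r\}$ is the spectral interpolation scale with $\dH q=[\dH0,\dH2]_{q/2}$, the interpolation inequality $\|T\|_{\dH q\to L^2\II}\le\|T\|_{\dH0\to L^2\II}^{\,1-q/2}\|T\|_{\dH2\to L^2\II}^{\,q/2}$ yields exactly
\begin{equation*}
\|(\tilde u_h-\tilde u)(t)\|_{L^2\II}\le c\,\gamma^{-(1-q/2)}h^2\ell_h^{\,1-q/2}\min(\gamma^{-1},t^{-\alpha})\|u_0\|_{\dH q},
\end{equation*}
because the factors $\gamma^{-1}$ and $\ell_h$ occur only in the $q=0$ endpoint (so are raised to $1-q/2$) while $h^2$ and $\min(\gamma^{-1},t^{-\alpha})$ are common to both endpoints. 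Summing the three contributions gives the stated estimate.

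The step I expect to be the crux is this interpolation, and in particular the sharp logarithmic exponent $\ell_h^{1-q/2}$. Simply repeating the proof of Lemma \ref{lem:err-nonsmooth} with $q$ kept free does \emph{not} suffice: replacing \eref{eqn:est-01} by the sharpened regularity bound $\|\tilde u(t)\|_{\dH2}\le c\gamma^{-(1-q/2)}\|u_0\|_{\dH q}$ (a direct spectral estimate in the spirit of Lemma \ref{lem:err-tildeu}, using that $F(t)$ is a contraction on $\dH2$) does recover the correct power $\gamma^{-(1-q/2)}$, but the inverse-estimate argument there forces the choice $\epsilon=1/\ell_h$ and so always leaves the factor $\epsilon^{-1}=\ell_h$, i.e.\ the non-sharp power $\ell_h^{1}$. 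It is precisely the operator interpolation against the log-free endpoint Lemma \ref{lem:err-smooth} that lowers $\ell_h^{1}$ to $\ell_h^{1-q/2}$. Hence the care required is to verify that the \emph{same} linear operator $T$ satisfies both endpoint bounds with a single parameter $\theta=q/2$, and that the identification $[\dH0,\dH2]_{q/2}=\dH q$ holds for the spectrally defined spaces used throughout; after this, the three bounds combine by the triangle inequality to finish the proof.
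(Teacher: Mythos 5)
Your proposal is correct and follows essentially the same route as the paper: the paper's (one-line) proof of Theorem \ref{thm:nonsmooth} is exactly the triangle-inequality decomposition through the intermediate solutions $\tilde{u}_h$ and $\tilde{u}$, with Corollary \ref{cor:err-2} handling the noise term, Lemma \ref{lem:err-tildeu} the regularization term, and the spatial-discretization lemmas the middle term. The only difference is explicitness: the paper cites only Lemmas \ref{lem:err-tildeu}, \ref{lem:err-nonsmooth} and Corollary \ref{cor:err-2} and never spells out how the intermediate-$q$ factor $\gamma^{-(1-q/2)}h^2\ell_h^{1-q/2}$ arises (real interpolation is invoked only later, in Remark \ref{rem:fully-interm}), so your operator-interpolation argument between the $q=0$ endpoint (Lemma \ref{lem:err-nonsmooth}) and the log-free $q=2$ endpoint (Lemma \ref{lem:err-smooth}) --- together with your observation that rerunning the $q=0$ proof with general $q$ would only give the non-sharp factor $\ell_h$ --- is precisely the detail the paper leaves implicit.
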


{
\begin{remark}\label{rem:weak-semi}
In case that $u_0\in L^2\II$, the above estimate does not imply a convergence rate of $\tildu_h(0)$.
However, we can still show the convergence, provided suitable scales of parameters. The proof is a direct result of Corollaries \ref{cor:err-tildeu-L2} and \ref{cor:err-2},
and  Lemma  \ref{lem:err-nonsmooth}.

Let $u$ be the solution to the problem \eref{eqn:fde} and $\widetilde u_h^\delta$ be the solution to the semidiscrete problem \eref{DBWP}.
Then there holds
\begin{equation*}
 \|\tildu_h(0)-u(0)\|_{L^2\II} \rightarrow 0,\quad \mbox{as}~~\gamma\rightarrow0, ~\frac\delta\gamma\rightarrow0~\mbox{and}
  ~ \frac{h\ell_h^{1/2}}{\gamma}\rightarrow0.
\end{equation*}
\end{remark}}

%\subsection{Convergence in case of weak initial data.}

\section{Fully discrete solution and error estimate}\label{sec:fully}

\subsection{Fully discrete scheme and solution operators.}
Now we study the time discretization of problem \eref{eqn:fde}.
We divide the time interval $[0,T]$ into a uniform grid, with $ t_n=n\tau$, $n=0,\ldots,N$, and $\tau=T/N$ being
the time step size. In case that $\fy(0)=0$, we approximate the Riemann-Liouville fractional derivative
\begin{equation*}
^{RL}\Dal \varphi(t)=\frac{1}{\Gamma(1-\alpha)}\frac{\d}{\d t}\int_0^t(t-s)^{-\alpha}\varphi(s)\d s
\end{equation*}
by the backward Euler (BE) convolution quadrature (with $\varphi_j=\varphi(t_j)$) \cite{Lubich:1986,JinLiZhou:2017sisc}:
\begin{equation*}
  ^{RL}\Dal \varphi(t_n) \approx \tau^{-\alpha} \sum_{j=0}^nb_j\varphi_{n-j}:=\bar\partial_\tau^\alpha \varphi_n,\quad\mbox{ with } \sum_{j=0}^\infty b_j\xi^j = (1-\xi)^\alpha.
\end{equation*}

The fully discrete scheme for problem \eref{eqn:fde-0} reads: find ${U_n}\in S_h$ such that
\begin{equation}\label{eqn:fully}
\bDal (U_n-U_0)-\Delta_h U_n= P_h f(t_n),\quad n=1,2,\ldots,N,
\end{equation}
with the initial condition $U_0=P_h u_0\in S_h$.
{Here we use the relation between Riemann-Liouville and Caputo fractional derivatives  \cite[p. 91]{KilbasSrivastavaTrujillo:2006}:
$$ \Dal u (t_n) = \Dal (u(t_n) - u_0) = {^{RL}\Dal} (u(t_n) - u_0) \approx \bar \partial_\tau^\alpha (u(t_n) - u_0). $$
}
 By means of discrete Laplace transform, the fully discrete solution $U_n\in S_h$ is given by
\begin{equation}\label{eqn:Sol-expr-uhtau}
  U_n = F_{h,\tau}^n U_0 + \tau \sum_{k=1}^{n} E_{h,\tau}^{n-k} P_hf(t_k),  \quad n=1,2,\ldots,N,
\end{equation}
where the fully discrete operators $F_{h,\tau}^n$ and $E_{h,\tau}^n$ are respectively defined by (see e.g., \cite{JinLiZhou:2017sisc})%(with $\delta_\tau(\xi)=(1-\xi)/\tau$)
\begin{eqnarray}\label{eqn:FE_ht-0}
F_{h,\tau}^n &= \frac{1}{2\pi\mathrm{i}}\int_{\Gamma_{\theta,\sigma}^\tau } e^{zt_n} \delta_\tau(e^{-z\tau})^{\alpha-1}({ \delta_\tau(e^{-z\tau})^\alpha}-\Delta_h)^{-1}\,\d z ,\\
E_{h,\tau}^n &= \frac{1}{2\pi\mathrm{i}}\int_{\Gamma_{\theta,\sigma}^\tau } e^{zt_n} ({ \delta_\tau(e^{-z\tau})^\alpha}-\Delta_h)^{-1}\,\d z ,\label{op:disc}
\end{eqnarray}
with $\delta_\tau(\xi)=(1-\xi)/\tau$ and the contour
$\Gamma_{\theta,\sigma}^\tau :=\{ z\in \Gamma_{\theta,\sigma}:|\Im(z)|\le {\pi}/{\tau} \}$
(oriented with an increasing imaginary part).

The next lemma gives elementary properties of the kernel $\delta_\tau(e^{-z\tau})$ \cite[Lemma B.1]{JinLiZhou:2017sisc}.
\begin{lemma}\label{lem:delta}
For any $\theta\in (\pi/2,\pi)$, there exists $\theta' \in (\pi/2,\pi)$  and
positive constants $c,c_1,c_2$ $($independent of $\tau$$)$ such that for all $z\in \Gamma_{\theta,\sigma}^\tau$
\begin{equation*}
  \eqalign{
& c_1|z|\leq
|\delta_\tau(e^{-z\tau})|\leq c_2|z|,\qquad
\delta_\tau(e^{-z\tau})\in \Sigma_{\theta'}. \\
& |\delta_\tau(e^{-z\tau})-z|\le c\tau |z|^{2},\qquad
 |\delta_\tau(e^{-z\tau})^\alpha-z^\alpha|\leq c\tau |z|^{1+\alpha}.
}
\end{equation*}
\end{lemma}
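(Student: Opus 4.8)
The plan is to reduce all four estimates to the study of a single entire function. Setting $w=z\tau$ and writing
\begin{equation*}
\delta_\tau(e^{-z\tau})=\frac{1-e^{-w}}{\tau}=z\,g(w),\qquad g(w):=\frac{1-e^{-w}}{w},
\end{equation*}
I note that $g$ extends to an entire function with $g(0)=1$ whose only zeros sit at $w=2\pi\mathrm{i}k$, $k\neq0$. The whole lemma then becomes a statement about $g$ evaluated on the admissible $w$, and the factorization $\delta_\tau(e^{-z\tau})=z\,g(w)$ separates the ``size'' $|z|$ from the ``symbol'' $g(w)$.

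The first and decisive observation — and the reason the truncation $|\Im(z)|\le\pi/\tau$ is built into $\Gamma_{\theta,\sigma}^\tau$ — is that the admissible values $w=z\tau$ range over a compact set $K$ \emph{independent of $\tau$}. Indeed, on the two rays $\arg z=\pm\theta$ one has $|\Im(z)|=|z|\sin\theta\le\pi/\tau$, hence $|w|=|z|\tau\le\pi/\sin\theta$; on the circular arc $|z|=\sigma$ one has $|w|=\sigma\tau\le\sigma\tau_0$. Thus $K\subset\{\,|\arg w|=\theta,\ |w|\le\pi/\sin\theta\,\}\cup\{\,|\arg w|\le\theta,\ |w|\le\sigma\tau_0\,\}$, a compact subset of the strip $|\Im w|\le\pi$ that avoids the nonzero zeros $2\pi\mathrm{i}k$ of $g$.

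Granting this, the magnitude bounds $c_1|z|\le|\delta_\tau(e^{-z\tau})|\le c_2|z|$ are immediate: $g$ is continuous and nonvanishing on the compact $K$, so $0<c_1\le|g(w)|\le c_2<\infty$ there, and $|\delta_\tau(e^{-z\tau})|=|z|\,|g(w)|$. The consistency estimates are equally quick once one notes that $g(w)-1$ is entire with a zero at $w=0$, so $g(w)-1=w\,\tilde g(w)$ with $\tilde g$ entire; bounding $\tilde g$ on $K$ gives $|g(w)-1|\le C|w|$, whence $|\delta_\tau(e^{-z\tau})-z|=|z|\,|g(w)-1|\le C\tau|z|^2$. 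For the $\alpha$-power, once the sector inclusion below guarantees that $\delta_\tau(e^{-z\tau})$, $z$ and $g(w)$ all lie in the cut plane $\mathbb{C}\setminus(-\infty,0]$ with $\delta_\tau(e^{-z\tau})^\alpha=z^\alpha g(w)^\alpha$ under the principal branch, the smoothness of $s\mapsto s^\alpha$ near $s=1$ yields $|g(w)^\alpha-1|\le C|g(w)-1|\le C|w|$, and multiplying by $|z|^\alpha$ gives $|\delta_\tau(e^{-z\tau})^\alpha-z^\alpha|\le C\tau|z|^{1+\alpha}$.

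The delicate point, which I expect to be the main obstacle, is the sector inclusion $\delta_\tau(e^{-z\tau})\in\Sigma_{\theta'}$, since the crude bound $|\arg z|+|\arg g(w)|$ could a priori exceed $\pi$. Here I would argue directly with $1-e^{-w}$ rather than splitting the argument. Writing $w=x+\mathrm{i}y$ with $|y|\le\pi$, one has $\Im(1-e^{-w})=e^{-x}\sin y$ and $\Re(1-e^{-w})=1-e^{-x}\cos y$; on $K\setminus\{0\}$ the value $1-e^{-w}$ is never a nonpositive real number, because $\Im(1-e^{-w})=0$ forces $y\in\{0,\pm\pi\}$, and in each such case a direct check shows $\Re(1-e^{-w})>0$, while $w=0$ is the unique zero. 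By compactness of $K$ and continuity, $\sup_{w\in K\setminus\{0\}}|\arg(1-e^{-w})|$ is attained and strictly less than $\pi$; together with $\arg(1-e^{-w})=\arg w+o(1)$ as $w\to0$, this produces a $\theta'\in(\pi/2,\pi)$ with $|\arg\delta_\tau(e^{-z\tau})|=|\arg(1-e^{-w})|\le\theta'$ uniformly in $\tau$. This sectoriality is essentially the A-stability of the backward Euler scheme in disguise, and it is precisely what legitimizes the branch manipulation used in the $\alpha$-power estimate.
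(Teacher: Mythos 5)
The paper itself does not prove this lemma: it is quoted verbatim from \cite[Lemma B.1]{JinLiZhou:2017sisc}, so there is no internal proof to compare against. Your argument is essentially the standard one behind that cited result — rescale to $w=z\tau$, observe that the truncation $|\Im(z)|\le\pi/\tau$ confines $w$ to a fixed compact set avoiding the zeros $2\pi\mathrm{i}k$ of $1-e^{-w}$, and then get the two-sided bound, the sector inclusion, and the consistency estimates from continuity, compactness and the Taylor expansion of $g(w)=(1-e^{-w})/w$ at $w=0$. Two small caveats: on the circular arc the bound $|w|\le\sigma\tau_0$ treats $\sigma$ as fixed, whereas in the paper's applications $\sigma=t_n^{-1}$ varies; what actually holds there is $\sigma\tau=1/n\le1$, so the compact set should be described using the standing assumption $\sigma\tau\le c$ rather than $\sigma$ fixed. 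This is cosmetic, since the lemma is only ever invoked under that assumption.

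There is, however, one genuine soft spot in the last estimate. You justify $\delta_\tau(e^{-z\tau})^\alpha=z^\alpha g(w)^\alpha$ by the fact that $z$, $g(w)$ and their product all lie in the cut plane $\mathbb{C}\setminus(-\infty,0]$. That is not sufficient: with principal branches, $(ab)^\alpha=a^\alpha b^\alpha$ requires $\arg a+\arg b\in(-\pi,\pi]$, and cut-plane membership of $a$, $b$, $ab$ does not guarantee this (take $a=b=e^{3\mathrm{i}\pi/4}$ and $\alpha=1/2$: all three factors avoid the cut, yet $(ab)^{1/2}=e^{-\mathrm{i}\pi/4}\neq e^{3\mathrm{i}\pi/4}=a^{1/2}b^{1/2}$). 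Fortunately the missing condition follows from the very sign computation you already made: for $|\Im w|\le\pi$ one has $\Im(1-e^{-w})=e^{-\Re w}\sin(\Im w)$, which has the same sign as $\Im w$. Hence $\arg z=\arg w$ and $\arg\bigl(z\,g(w)\bigr)=\arg(1-e^{-w})$ lie in the same closed half-plane (both in $[0,\pi)$ when $\Im w\in(0,\pi]$, both in $(-\pi,0]$ when $\Im w\in[-\pi,0)$, and all quantities positive real when $\Im w=0$ on $K$), so $\arg g(w)=\arg(1-e^{-w})-\arg w$ is already the principal argument and $\arg z+\arg g(w)=\arg(1-e^{-w})\in(-\pi,\pi)$, which legitimizes the factorization. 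You should also note that the Lipschitz bound $|g(w)^\alpha-1|\le C|g(w)-1|$ needs $s^\alpha$ to be Lipschitz on the segments $[1,g(w)]$, which holds uniformly because these segments stay in a compact subset of the cut plane (their imaginary part is $t\,\Im g(w)$, so they meet the real axis only at $1$ or in $(0,\infty)$). With these two patches, spelled out from computations you already have, the proof is complete and matches the standard argument in the cited reference.
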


The fully discrete solution operators has been fully understood in  \cite{JinLiZhou:2017sisc}, by using the expression \eref{eqn:FE_ht-0} and \eref{op:disc}, resolvent estimate and properties of the kernel $\delta_\tau(e^{-z\tau})$
in Lemma \ref{lem:delta}. With the spectral decomposition, we can write
\begin{equation}\label{eqn:Sol-expr-uhtau-2}
  U_n = F_{h,\tau}^n U_0 = \sum_{j=1}^K F_{\tau}^n(\lambda_j^h) (u_0,\fy_j^h){\varphi_j^h}
\end{equation}
where $F_{\tau}^n(\lambda_j^h)$ is the solution to the discrete initial value problem
$$\bDal [F_{\tau}^n(\lambda_j^h)-F_{\tau}^0(\lambda_j^h)] + \lambda_j^h F_{\tau}^n(\lambda_j^h) = 0,\quad \mbox{with}\quad F_{\tau}^0(\lambda_j^h) = 1.$$
{From \eref{eqn:FE_ht-0}}, we know that ${F_{\tau}^n(\lambda_j^h)}$ could be written as
\begin{equation}\label{eqn:FE_ht}
F_{\tau}^n(\lambda_j^h) = \frac{1}{2\pi\mathrm{i}}\int_{\Gamma_{\theta,\sigma}^\tau } e^{zt_n} \delta_\tau(e^{-z\tau})^{\alpha-1}({ \delta_\tau(e^{-z\tau})^\alpha}+\lambda_j^h)^{-1}\,\d z.
\end{equation}

\begin{lemma}\label{lem:fully-sol}
Let  $F_{\tau}^n(\lambda)$ be defined as in \eref{eqn:FE_ht}. Then for $\lambda>0$, there holds
\begin{equation}\label{eqn:es-01}
\big|E_{\alpha,1}(-\lambda t_n^\alpha) - F_{\tau}^n(\lambda)\big|\le \frac{c}{(1+\lambda t_n^\alpha)}n^{-1}.
\end{equation}
  Meanwhile, there holds
\begin{equation}\label{eqn:es-02}
\lambda^{-1} \big|E_{\alpha,1}(-\lambda t_n^\alpha) - F_{\tau}^n(\lambda)\big|\le c\tau t_{n}^{\alpha-1}.
\end{equation}
 where $c$ is a generic number independent of $\lambda$, $t$ and $\tau$.
\end{lemma}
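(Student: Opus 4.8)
The plan is to represent both $E_{\alpha,1}(-\lambda t_n^\alpha)$ and $F_\tau^n(\lambda)$ as contour integrals over a common truncated Hankel contour and to estimate the difference of the integrands using the kernel bounds of Lemma \ref{lem:delta}. Using \eref{eqn:sol-ode} with $t=t_n$ and Cauchy's theorem, I first deform the continuous representation so that the circular arc of $\Gamma_{\theta,\sigma}$ has radius $\sigma=1/t_n$; the discrete representation \eref{eqn:FE_ht} is treated on the same contour $\Gamma_{\theta,\sigma}^\tau$ with $\sigma=1/t_n$. This deformation is legitimate because the discrete integrand is analytic in the strip $|\Im z|<\pi/\tau$ and, by Lemma \ref{lem:delta}, $\delta_\tau(e^{-z\tau})^\alpha$ lies in a sector $\Sigma_{\alpha\theta'}$ avoiding $-\lambda$ for $\lambda>0$. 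Writing $\delta_\tau:=\delta_\tau(e^{-z\tau})$ for brevity, I split
\[
\eqalign{
E_{\alpha,1}(-\lambda t_n^\alpha)-F_\tau^n(\lambda)
&=\frac{1}{2\pi\mathrm{i}}\int_{\Gamma_{\theta,\sigma}^\tau}e^{zt_n}\big[z^{\alpha-1}(z^\alpha+\lambda)^{-1}-\delta_\tau^{\alpha-1}(\delta_\tau^\alpha+\lambda)^{-1}\big]\,\d z\\
&\quad+\frac{1}{2\pi\mathrm{i}}\int_{\Gamma_{\theta,\sigma}\setminus\Gamma_{\theta,\sigma}^\tau}e^{zt_n}z^{\alpha-1}(z^\alpha+\lambda)^{-1}\,\d z
=:\mathrm{I}+\mathrm{II},}
\]
the consistency error on the common contour plus the truncation tail.

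For $\mathrm{I}$, I combine the kernel bounds $|z^{\alpha-1}-\delta_\tau^{\alpha-1}|\le c\tau|z|^{\alpha}$ and $|\delta_\tau^\alpha-z^\alpha|\le c\tau|z|^{1+\alpha}$ and $c_1|z|\le|\delta_\tau|\le c_2|z|$ from Lemma \ref{lem:delta} with the resolvent identity $(z^\alpha+\lambda)^{-1}-(\delta_\tau^\alpha+\lambda)^{-1}=(z^\alpha+\lambda)^{-1}(\delta_\tau^\alpha-z^\alpha)(\delta_\tau^\alpha+\lambda)^{-1}$ and the sectorial resolvent bounds $|z^\alpha+\lambda|^{-1},|\delta_\tau^\alpha+\lambda|^{-1}\le c(|z|^\alpha+\lambda)^{-1}$. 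This yields a pointwise bound on the integrand difference of the form $c\tau\big[\tfrac{|z|^\alpha}{|z|^\alpha+\lambda}+\tfrac{|z|^{2\alpha}}{(|z|^\alpha+\lambda)^2}\big]\le c\tau\tfrac{|z|^\alpha}{|z|^\alpha+\lambda}$. On the arc $|z|=1/t_n$ this factor equals exactly $(1+\lambda t_n^\alpha)^{-1}$ and the arc length is $O(1/t_n)$, producing an $O\!\big(n^{-1}(1+\lambda t_n^\alpha)^{-1}\big)$ contribution; on the rays $z=\rho e^{\pm\mathrm{i}\theta}$ the substitution $\rho=s/t_n$, the exponential factor $|e^{zt_n}|=e^{\rho t_n\cos\theta}$ (with $\cos\theta<0$), and the elementary inequality $\tfrac{s^\alpha}{s^\alpha+\lambda t_n^\alpha}\le\tfrac{s^\alpha}{1+\lambda t_n^\alpha}$ for $s\ge1$ give the same bound, using $\tau/t_n=n^{-1}$.

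For the tail $\mathrm{II}$ one has $|z|\gtrsim1/\tau$, so $\rho t_n\gtrsim n$. Bounding the integrand by $\min\!\big(|z|^{-1},\lambda^{-1}|z|^{\alpha-1}\big)$ and integrating against $e^{\rho t_n\cos\theta}$ produces a super-exponentially small factor $e^{-cn}$; distinguishing the cases $\lambda t_n^\alpha\le 1$ and $\lambda t_n^\alpha>1$ (using the first, resp. second, of the two bounds) shows $\mathrm{II}\le c\,n^{-1}(1+\lambda t_n^\alpha)^{-1}$. Adding $\mathrm{I}$ and $\mathrm{II}$ gives \eref{eqn:es-01}. The decisive device here is the choice $\sigma=1/t_n$, which makes the arc reproduce precisely the resolvent factor $(1+\lambda t_n^\alpha)^{-1}$, and the care needed to retain that factor in the tail, where crude bounds would lose it; this uniform-in-$\lambda$ bookkeeping is the main obstacle.

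For \eref{eqn:es-02} the extra $\lambda^{-1}$ is handled by the algebraic identity $z^{\alpha-1}(z^\alpha+\lambda)^{-1}=z^{-1}-\lambda z^{-1}(z^\alpha+\lambda)^{-1}$, hence $\lambda^{-1}z^{\alpha-1}(z^\alpha+\lambda)^{-1}=\lambda^{-1}z^{-1}-z^{-1}(z^\alpha+\lambda)^{-1}$, and likewise with $\delta_\tau$ in place of $z$. Since $\tfrac{1}{2\pi\mathrm{i}}\int_{\Gamma_{\theta,\sigma}}e^{zt_n}z^{-1}\,\d z=E_{\alpha,1}(0)=1$ and $\tfrac{1}{2\pi\mathrm{i}}\int_{\Gamma_{\theta,\sigma}^\tau}e^{zt_n}\delta_\tau^{-1}\,\d z=F_\tau^n(0)=1$, the singular $\lambda^{-1}$-parts cancel, leaving $\lambda^{-1}(E_{\alpha,1}(-\lambda t_n^\alpha)-F_\tau^n(\lambda))$ as $-1$ times the contour difference of the integrand $z^{-1}(z^\alpha+\lambda)^{-1}$. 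Repeating the split and estimates above — now using $|z^{-1}-\delta_\tau^{-1}|\le c\tau$ in place of the $z^{\alpha-1}$-difference — the arc and rays each contribute $O(\tau t_n^{\alpha-1})$ (the substitution $\rho=s/t_n$ leaves $c\tau t_n^{\alpha-1}\int_1^\infty e^{-cs}s^{-\alpha}\,\d s$), and the tail is again exponentially small, which establishes \eref{eqn:es-02}. The cancellation enabled by $E_{\alpha,1}(0)=F_\tau^n(0)=1$ is the key observation that makes the second bound tractable.
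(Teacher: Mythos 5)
Your proof is correct, and for \eref{eqn:es-02} it is essentially the paper's own argument: the same device of pulling out the constant $1$ (equivalently, the cancellation $E_{\alpha,1}(0)=F_\tau^n(0)=1$) removes the singular $\lambda^{-1}$ parts, and the remaining difference of integrals of $z^{-1}(z^\alpha+\lambda)^{-1}$-type integrands is bounded exactly as you do, via a $c\tau|z|^{-\alpha}$ estimate on the common contour plus an exponentially small truncation tail.

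For \eref{eqn:es-01}, however, your route genuinely differs from the paper's, and in a worthwhile way. The paper never proves the factor $(1+\lambda t_n^\alpha)^{-1}$ in one piece: it cites \cite{Jin:positive} for the bound $|E_{\alpha,1}(-\lambda t_n^\alpha)-F_\tau^n(\lambda)|\le cn^{-1}$ and then proves only the complementary estimate $c\lambda^{-1}t_n^{-\alpha}n^{-1}$, using the pointwise integrand bound $c\tau\lambda^{-1}|z|^{\alpha}$, which deliberately discards the resolvent denominator; the two bounds combine since $\min(1,\lambda^{-1}t_n^{-\alpha})\simeq(1+\lambda t_n^\alpha)^{-1}$. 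You instead retain the resolvent factor throughout --- the sharper pointwise bound $c\tau|z|^{\alpha}(|z|^{\alpha}+\lambda)^{-1}$, the observation that on the arc $|z|=1/t_n$ this factor equals exactly $(1+\lambda t_n^\alpha)^{-1}$, and the case split $\lambda t_n^\alpha\le 1$ versus $\lambda t_n^\alpha>1$ in the tail --- which yields the unified bound in a single self-contained pass, with no appeal to the external estimate from \cite{Jin:positive}. What your version buys is self-containedness; what it costs is the extra uniform-in-$\lambda$ bookkeeping you yourself flag as the main obstacle. One small point of rigor, which your write-up shares with the paper's own proof: the bound $|z^{\alpha-1}-\delta_\tau(e^{-z\tau})^{\alpha-1}|\le c\tau|z|^{\alpha}$ is not literally among the statements of Lemma \ref{lem:delta}; it follows from $|\delta_\tau(e^{-z\tau})-z|\le c\tau|z|^2$, $c_1|z|\le|\delta_\tau(e^{-z\tau})|\le c_2|z|$ and sectoriality by a mean-value argument along the segment joining $z$ and $\delta_\tau(e^{-z\tau})$, and this one-line justification should be stated rather than attributed to the lemma (the paper's estimate of its term $I_2$ tacitly uses the same extension).
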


\begin{proof}
It has been proved in \cite{Jin:positive} that
$$\big|E_{\alpha,1}(-\lambda t_n^\alpha) - F_{\tau}^n(\lambda)\big|\le c n^{-1}.$$
Therefore it suffices to show that
$$\big|E_{\alpha,1}(-\lambda t_n^\alpha) - F_{\tau}^n(\lambda)\big|\le c \lambda^{-1} t_n^{-\alpha} n^{-1}.$$
From \eref{eqn:FE_ht} and \eref{eqn:sol-ode}, we know
\begin{equation*}
 \eqalign{
&\big|E_{\alpha,1}(-\lambda_j^h t^\alpha) - F_{\tau}^n(\lambda_j^h)\big|
\leq \big|\frac{1}{2\pi i}\int_{\Gamma_{\theta,\sigma}\backslash\Gamma_{\theta,\sigma}^\tau}e^{zt_n}z^{\al-1}(z^\alpha+\lambda)^{-1} dz\big| \\
&+\big|\frac{1}{2\pi i}\int_{\Gamma_{\theta,\sigma}^\tau}e^{zt_n}\left[z^{\al-1}(z^\al+\lambda)^{-1}-\delta_\tau(e^{-z\tau})^{\alpha-1}(\delta_\tau(e^{-z\tau})^\alpha+\lambda)^{-1}\right]dz  \big|\\
&=:I_1+I_2.
}
\end{equation*}
First of all, we shall establish a bound of $I_1$, which follows from the direct calculation:
\begin{equation*}%\label{fulcontour1}
  \eqalign{
I_1&\leq c \int_{\Gamma_{\theta,\sigma}\backslash\Gamma_{\theta,\sigma}^\tau} |e^{zt_n}| |z|^{\al-1} |z^\alpha+\lambda|^{-1} \, |d z|
\le c \lambda^{-1} \int_{\pi/\tau\sin\theta}^\infty  e^{\rho(\cos\theta)t_n} \rho^{\alpha-1} d\rho \\
&\le c \lambda^{-1} t_n^{-\alpha}\int_{cn}^\infty  e^{-c \rho} \rho^{\alpha-1} d\rho \le  c \lambda^{-1} t_n^{-\alpha} n^{-1} \int_{cn}^\infty  e^{-c \rho} \rho^{\alpha} d\rho
\le c \lambda^{-1} t_n^{-\alpha} n^{-1}  .
}
\end{equation*}
Next we turn to $I_2$ . By lemma \ref{lem:delta}, we have for all $z\in \Gamma_{\theta,\sigma}^\tau$
\begin{equation*}%\label{fulcontour1}
  \eqalign{
&\Big|\frac{z^{\al-1}}{z^\al+\lambda}-\frac{\delta_\tau(e^{-z\tau})^{\alpha-1}}{\delta_\tau(e^{-z\tau})^\alpha+\lambda}\Big|\\
=&\Big|\frac{z^{\al-1}\delta_\tau(e^{-z\tau})^{\alpha-1}(\delta_\tau(e^{-z\tau})-z)}{(z^\al+\lambda)(\delta_\tau(e^{-z\tau})^\alpha+\lambda)}\Big|
 +\Big|\frac{(z^{\al-1} - \delta_\tau(e^{-z\tau})^{\alpha-1})\lambda}{(z^\al+\lambda)(\delta_\tau(e^{-z\tau})^\alpha+\lambda)}\Big| \\\le& c\tau \lambda^{-1} |z|^{\alpha}.
}
\end{equation*}
Therefore, with $\sigma=t_n^{-1}$, the  term $I_2$ can be bounded as
\begin{equation*}%\label{fulcontour1}
  \eqalign{
I_2&\leq c \tau \lambda^{-1} \int_{\Gamma_{\theta,\sigma}^\tau} |e^{zt_n}| |z|^{\al}  \,|dz| \\
&\le c\tau\lambda^{-1} \Big(\int_{\sigma}^{\infty} e^{-c\rho t_n} \rho^\alpha \,d\rho+ \sigma^{1+\alpha} \int_{-\theta}^\theta \, d \psi\Big)\\
&\le c\tau\lambda^{-1} t_n^{-\alpha-1} \le c\lambda^{-1} t_n^{-\alpha} n^{-1}.
}
\end{equation*}

Next, we turn to the estimate \eref{eqn:es-02}, which can be derived from the expressions:
\begin{equation*}%\label{fulcontour1}
\eqalign{
&E_{\alpha,1}(-\lambda t_n^\alpha) = 1 - \frac{ \lambda}{2\pi\mathrm{i}}\int_{\Gamma_{\theta,\sigma}} e^{zt_n}
z^{-1} (z^\alpha+\lambda)^{-1}\,\d z, \\
&F_{\tau}^n(\lambda) = 1 -  \frac{ \lambda}{2\pi\mathrm{i}}\int_{\Gamma_{\theta,\sigma}^\tau } e^{zt_n}
\delta_\tau(e^{-z\tau})^{-1}({ \delta_\tau(e^{-z\tau})^\alpha}+\lambda)^{-1}\,\d z,
}
\end{equation*}
with $n\ge1$.
Then we arrive at
\begin{equation*}%\label{fulcontour1}
  \eqalign{
&\lambda^{-1} \big|E_{\alpha,1}(-\lambda t_n^\alpha) - F_{\tau}^n(\lambda)\big|\\
\le& \big|\frac{1}{2\pi i}\int_{\Gamma_{\theta,\sigma}^\tau}e^{zt_n}\left[z^{-1}(z^\al+\lambda)^{-1}-
\delta_\tau(e^{-z\tau})^{-1}(\delta_\tau(e^{-z\tau})^\alpha+\lambda)^{-1}\right]dz  \big| \\
\le&  \big|\frac{1}{2\pi i}\int_{\Gamma_{\theta,\sigma}\backslash\Gamma_{\theta,\sigma}^\tau}e^{zt_n}z^{-1}(z^\alpha+\lambda)^{-1} dz\big|
=:II_1+II_2.
}
\end{equation*}
By Lemma \ref{lem:delta}, we have for all $z\in \Gamma_{\theta,\sigma}^\tau$
\begin{equation*}%\label{fulcontour1}
  \eqalign{
\left|z^{-1}(z^\al+\lambda)^{-1}-
\delta_\tau(e^{-z\tau})^{-1}(\delta_\tau(e^{-z\tau})^\alpha+\lambda)^{-1}\right|
\le c\tau |z|^{-\alpha},
}
\end{equation*}
and therefore with the setting $\sigma=t_n^{-1}$ we have the bound  for $n\ge1$
\begin{equation*}%\label{fulcontour1}
  \eqalign{
\fl II_1 &\leq c \tau   \int_{\Gamma_{\theta,\sigma}^\tau} |e^{zt_n}| |z|^{-\al}  \,|dz|
\le c\tau \Big(\int_{\sigma}^{\infty} e^{-c\rho t_n} \rho^{-\alpha} \,d\rho+ \sigma^{1-\alpha} \int_{-\theta}^\theta \, d \psi\Big)\le c\tau t_n^{\alpha-1}.
}
\end{equation*}
Similarly, to bound $II_2$, we apply Lemma \ref{lem:delta} to derive that  for $n\ge1$
\begin{equation*}%\label{fulcontour1}
  \eqalign{
II_2 &\leq  c \int_{\Gamma_{\theta,\sigma}\backslash\Gamma_{\theta,\sigma}^\tau} |e^{zt_n}| |z|^{-\al-1} \, |d z|
\le c   \int_{\pi/\tau\sin\theta}^\infty  e^{\rho(\cos\theta)t_n} \rho^{-\alpha-1} d\rho \\
&\le c  t_n^{\alpha}\int_{cn}^\infty  e^{-c \rho} \rho^{-\alpha-1} d\rho \le  c t_n^{\alpha} n^{-1} \int_{0}^\infty  e^{-c \rho} \rho^{-\alpha} d\rho
\le c  t_n^{\alpha} n^{-1}\le c  \tau t_n^{\alpha-1} .
}
\end{equation*}
Both the estimates together with the fact that $E_{\alpha,1}(0) = F_{\tau}^0(\lambda)=1$ lead to the desired result.
 \end{proof}

The above lemma and Lemma \ref{lem1}  lead to the following corollary.
%The next lemma shows the asymptotic property, whose proof
\begin{corollary}\label{cor:fully-sol}
For any $1\leq n\leq N$, $\Fht^n(\lambda)$ is positive, and
there exist positive constants $c_0$, $c_1$ such that
\begin{equation*}
\frac{c_0}{1+\lambda t_n^\alpha }\leq F_{\tau}^n(\lambda_j^h)\leq \frac{c_1}{1+\lambda  t_n^\alpha},
\end{equation*}
\end{corollary}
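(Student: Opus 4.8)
The plan is to regard $F_{\tau}^n(\lambda)$ as a perturbation of the exact Mittag--Leffler value $E_{\alpha,1}(-\lambda t_n^\alpha)$, for which Lemma~\ref{lem1} already gives sharp two-sided control, and then to absorb the perturbation quantified by \eref{eqn:es-01}. Writing $z=\lambda t_n^\alpha\ge0$, Lemma~\ref{lem1} yields explicit constants $\tilde c_0,\tilde c_1>0$ (depending only on $\alpha$) with $\tilde c_0(1+z)^{-1}\le E_{\alpha,1}(-z)\le \tilde c_1(1+z)^{-1}$; concretely one may take $\tilde c_1=1$ (since $\Gamma(1+\alpha)^{-1}\ge1$) and $\tilde c_0=\Gamma(1-\alpha)^{-1}$, because $(1+z)E_{\alpha,1}(-z)\ge (1+z)/(1+\Gamma(1-\alpha)z)$ is decreasing in $z$ with infimum $\Gamma(1-\alpha)^{-1}$ (here $\Gamma(1-\alpha)\ge1$ on $(0,1)$). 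Estimate \eref{eqn:es-01} then says exactly that $F_{\tau}^n(\lambda)$ differs from this value by at most $c\,n^{-1}(1+\lambda t_n^\alpha)^{-1}$, so both bounds should follow once $n^{-1}$ is controlled.

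For the upper bound this is immediate: since $n\ge1$ gives $n^{-1}\le1$, one has $$F_{\tau}^n(\lambda)\le E_{\alpha,1}(-\lambda t_n^\alpha)+\frac{c\,n^{-1}}{1+\lambda t_n^\alpha}\le \frac{\tilde c_1+c}{1+\lambda t_n^\alpha},$$ so $c_1=\tilde c_1+c$ works. The same splitting gives $$F_{\tau}^n(\lambda)\ge \frac{\tilde c_0-c\,n^{-1}}{1+\lambda t_n^\alpha},$$ which is a genuine lower bound once $\tilde c_0-c\,n^{-1}>0$. Choosing $N_0$ so large that $c\,n^{-1}\le \tilde c_0/2$ for all $n\ge N_0$, this already proves $F_{\tau}^n(\lambda)\ge \tfrac12\tilde c_0(1+\lambda t_n^\alpha)^{-1}$ for every $n\ge N_0$, uniformly in $\lambda>0$.

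The main obstacle is the lower bound in the remaining finite range $1\le n<N_0$, where $n^{-1}$ is not small and \eref{eqn:es-01} is too crude; here I would argue directly from the defining recurrence $\bDal[F_{\tau}^n(\lambda)-1]+\lambda F_{\tau}^n(\lambda)=0$. Setting $w_n=F_{\tau}^n(\lambda)$ and using the weights $\sum_j b_j\xi^j=(1-\xi)^\alpha$, multiplication by $\tau^\alpha$ rearranges the recurrence to $$(1+\lambda\tau^\alpha)\,w_n=s_n+\sum_{j=1}^n|b_j|\,w_{n-j},\qquad s_n:=\sum_{j=0}^n b_j,$$ where I use $b_0=1$ and $b_j<0$ for $j\ge1$ (the sign pattern of the binomial coefficients of $(1-\xi)^\alpha$). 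Since $\sum_{j=0}^\infty b_j=(1-1)^\alpha=0$, the partial sums satisfy $s_n=\sum_{j>n}|b_j|>0$; an induction on $n$ then yields $w_n>0$ for all $n$, which is precisely the asserted positivity, and moreover the $\lambda$-uniform bound $w_n\ge s_n/(1+\lambda\tau^\alpha)$. Finally $n<N_0$ forces $\tau^\alpha=t_n^\alpha n^{-\alpha}\le t_n^\alpha$, hence $1+\lambda\tau^\alpha\le 1+\lambda t_n^\alpha$, so $w_n\ge s_n/(1+\lambda t_n^\alpha)$; taking the minimum $c''=\min\{s_1,\dots,s_{N_0-1}\}>0$ over the finitely many indices closes this range. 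Combining the two regimes gives the lower bound with $c_0=\min(\tfrac12\tilde c_0,\,c'')$, completing the proof. The only delicate point is preserving uniformity in $\lambda$ throughout the small-$n$ range, which is exactly why I avoid \eref{eqn:es-01} there and instead exploit the $\lambda$-uniform recurrence estimate.
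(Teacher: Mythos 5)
Your proof is correct, and it is actually more complete than the paper's own treatment. The paper obtains this corollary in one line, by combining the weighted error estimate \eref{eqn:es-01} of Lemma \ref{lem:fully-sol} with the two-sided Mittag--Leffler bounds of Lemma \ref{lem1} --- exactly your perturbation argument --- and it implicitly leans on the nonnegativity-preservation result of \cite{Jin:positive} for the positivity claim. You follow the same route for the upper bound and for the lower bound when $n$ is large, but you correctly identify the point the paper glosses over: since the constant $c$ in \eref{eqn:es-01} need not be smaller than the lower Mittag--Leffler constant, the perturbation argument alone cannot produce a positive lower bound for the finitely many small $n$ with $c\,n^{-1}\ge \tilde c_0$. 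Your resolution --- rewriting the CQ-BE recurrence as $(1+\lambda\tau^\alpha)w_n=s_n+\sum_{j=1}^n|b_j|w_{n-j}$ with $b_0=1$, $b_j<0$ for $j\ge1$, $s_n=\sum_{j>n}|b_j|>0$, which yields positivity by induction and the $\lambda$- and $\tau$-uniform bound $w_n\ge s_n/(1+\lambda\tau^\alpha)\ge s_n/(1+\lambda t_n^\alpha)$ --- is elementary, self-contained (it also replaces the appeal to \cite{Jin:positive} for positivity), and closes the small-$n$ gap cleanly; the only cost is the extra bookkeeping with the quadrature weights, whereas the paper's derivation is shorter but, as written, incomplete on exactly this point.
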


Then the next {corollary} follows immediately.
\begin{corollary}\label{cor:fully-sol-2}
Let $\Fht^n(\lambda)$ be defined as  \eref{eqn:FE_ht}, then there holds
\begin{equation*}
| \Fht^n(\lambda) \big(\gamma + \Fht^N(\lambda)\big)^{-1} |\le c \min(\gamma^{-1},t_n^{-\alpha}),
\end{equation*}
where the generic constant $c$ may depends on $T$, but is always independent of $\gamma$, $\lambda$, $\tau$, $n$ and $h$.
\end{corollary}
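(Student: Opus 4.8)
The plan is to mimic, at the fully discrete level, the argument used to prove Lemma \ref{lem:tildeu-0} at the continuous level, simply replacing the Mittag--Leffler bounds of Lemma \ref{lem1} by the discrete two-sided bounds and positivity supplied by Corollary \ref{cor:fully-sol}. Since Corollary \ref{cor:fully-sol} already encapsulates all the delicate contour analysis of $\Fht^n(\lambda)$, what remains is purely elementary. Throughout, positivity of $\Fht^n(\lambda)$ and $\Fht^N(\lambda)$ guarantees that the denominator $\gamma+\Fht^N(\lambda)$ is strictly positive, so the quotient is well defined.

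First I would establish the $\gamma^{-1}$ bound. Corollary \ref{cor:fully-sol} gives $\Fht^n(\lambda)\le c_1/(1+\lambda t_n^\alpha)\le c_1$, because $1+\lambda t_n^\alpha\ge 1$ for $\lambda\ge0$. Since $\Fht^N(\lambda)>0$ forces $\gamma+\Fht^N(\lambda)\ge\gamma$, one immediately gets
\begin{equation*}
\left| \frac{\Fht^n(\lambda)}{\gamma+\Fht^N(\lambda)} \right| \le \frac{c_1}{\gamma}.
\end{equation*}

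Next I would derive the $t_n^{-\alpha}$ bound. Dropping the nonnegative $\gamma$ from the denominator reduces the task to bounding the ratio $\Fht^n(\lambda)/\Fht^N(\lambda)$. Applying the upper bound for the numerator and the lower bound for the denominator in Corollary \ref{cor:fully-sol}, and recalling $t_N=T$, yields
\begin{equation*}
\frac{\Fht^n(\lambda)}{\gamma+\Fht^N(\lambda)} \le \frac{\Fht^n(\lambda)}{\Fht^N(\lambda)} \le \frac{c_1}{c_0}\cdot\frac{1+\lambda T^\alpha}{1+\lambda t_n^\alpha}.
\end{equation*}
The only point requiring a small computation is the elementary inequality $\frac{1+\lambda T^\alpha}{1+\lambda t_n^\alpha}\le c_T\, t_n^{-\alpha}$, uniformly in $\lambda\ge0$: splitting the quotient as $\frac{1}{1+\lambda t_n^\alpha}+\frac{\lambda T^\alpha}{1+\lambda t_n^\alpha}$, the first term is at most $1$ and the second is at most $T^\alpha/t_n^\alpha=(T/t_n)^\alpha$; since $t_n\le T$ we have $(T/t_n)^\alpha\ge1$, whence the sum is at most $2(T/t_n)^\alpha=2T^\alpha t_n^{-\alpha}$. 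This gives the second bound with a constant $c=2c_1T^\alpha/c_0$ depending only on $T$.

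Combining the two estimates produces the minimum and completes the proof. I do not anticipate any genuine obstacle here: the substantive work has already been carried out in Lemma \ref{lem:fully-sol} and Corollary \ref{cor:fully-sol}, and the present statement is a routine consequence. The only care needed is to verify that the ratio bound $\frac{1+\lambda T^\alpha}{1+\lambda t_n^\alpha}\le c_T t_n^{-\alpha}$ holds uniformly in $\lambda$, which is exactly the discrete analogue of the step appearing in the proof of Lemma \ref{lem:tildeu-0}.
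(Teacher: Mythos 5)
Your proof is correct and follows essentially the same route as the paper: both bounds come from the two-sided estimate and positivity in Corollary \ref{cor:fully-sol}, first using $0\le \Fht^n(\lambda)\le c_1$ and $\gamma+\Fht^N(\lambda)\ge\gamma$ to get the $c\gamma^{-1}$ bound, then dropping $\gamma$ and bounding $\Fht^n(\lambda)/\Fht^N(\lambda)\le c(1+\lambda T^\alpha)/(1+\lambda t_n^\alpha)\le c_T t_n^{-\alpha}$. The only difference is that you spell out the elementary last inequality explicitly, which the paper leaves implicit.
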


\begin{proof}
By Corollary \ref{cor:fully-sol}, we know that  $0 \le F_{\tau}^n (\lambda) \le c_1$, we arrive at
\begin{equation*}
| \Fht^n(\lambda) \big(\gamma + \Fht^N(\lambda)\big)^{-1} | \le c \gamma^{-1}.
\end{equation*}
On the other hand, we apply Corollary \ref{cor:fully-sol} again to obtain
\begin{equation*}
\frac{\Fht^n(\lambda)}{\gamma + \Fht^N(\lambda)} \le \frac{\Fht^n(\lambda)}{\Fht^N(\lambda)}
\le \frac{c(1+\lambda T^\alpha)}{1+\lambda t_n^\alpha} \le c_T t_n^{-\alpha}.
\end{equation*}
This completes the proof of the {corollary}.
\end{proof}

\subsection{fully discrete scheme for backward problem and error estimate.} Now we shall propose a fully discrete scheme for
solving the backward subdiffusion problem. Here we apply
the semidiscrete scheme and the convolution quadrature generated by backward Euler scheme. Then the  fully discrete scheme reads:
find $\tilde U_n^\delta\in X_h$, $n=1,2,\ldots,N$, such that

\begin{equation}\label{eqn:fully-back}
\eqalign{
\bDal (\tilde U_n^\delta - \tilde U_0^\delta)-\Delta_h\tilde U_n^\delta&=0,\quad \forall ~n=1,2,\ldots,N.\\
\hskip0.8in \gamma\tilde U_0^\delta+\tilde U_N^\delta&=P_hg_\delta.}
\end{equation}
Then the solution could be written as
\begin{equation}\label{eqn:tildeUdn}
  \tilde U_n^\delta = F_{h,\tau}^n \tilde{U}_0^\delta = F_{h,\tau}^n (\gamma I + F_{h,\tau}^N)^{-1} P_hg_\delta
= \sum_{j=1}^K \frac{F_{\tau}^n(\lambda_j^h)}{\gamma  + F_{\tau}^N(\lambda_j^h)} (P_h g_\delta, \fy_j^h){\varphi_j^h}.
\end{equation}

Similarly, we shall use the auxiliary solution $\tilde U_n$ satisfying
\begin{equation}\label{eqn:fully-back-2}
 \eqalign{
\bDal (\tilde U_n - \tilde U_0)-\Delta_h\tilde U_n&=0,\quad \forall ~n=1,2,\ldots,N.\\
\hskip0.8in \gamma\tilde U_0 +\tilde U_N &=P_hg.}
\end{equation}
Then $\tilde U_n$ could be written as
\begin{equation}\label{eqn:tildeUn}
 \tilde U_n = F_{h,\tau}^n (\gamma I + F_{h,\tau}^N)^{-1} P_hg
= \sum_{j=1}^K \frac{F_{\tau}^n(\lambda_j^h)}{\gamma  + F_{\tau}^N(\lambda_j^h)} ({P_hg}, \fy_j^h){\varphi_j^h}.
\end{equation}

The same as Corollary \ref{cor:err-2}, we may show the following estimate of $\tilde U_n^\delta - \tilde U_n$.
\begin{lemma}\label{lem:fully-lem1}
Let $\tilde U_n^\delta$ and $\tilde U_n $ be solutions to \eref{eqn:fully-back} and \eref{eqn:fully-back-2}, respectively.
Then there holds that
$$\| \tilde U_n^\delta - \tilde U_n  \|_{L^2\II} \le c \delta \min (\gamma^{-1}, t_n^{-\alpha}), \quad\mbox{for all}~ 0 \le n\le N.$$
where the generic constant $c$ is independent of  $\gamma$, $\delta$, $\tau$, $n$ and $h$.
\end{lemma}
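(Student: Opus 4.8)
The plan is to follow the template of Corollary \ref{cor:err-2}, simply replacing the semidiscrete multiplier bound (Lemma \ref{lem:tildeuh-0}) by its fully discrete analogue (Corollary \ref{cor:fully-sol-2}). First I would subtract the two spectral representations \eref{eqn:tildeUdn} and \eref{eqn:tildeUn}. Since both share the identical multiplier $F_{\tau}^n(\lambda_j^h)/(\gamma + F_{\tau}^N(\lambda_j^h))$ and differ only through the data, this gives
\begin{equation*}
\tilde U_n^\delta - \tilde U_n = \sum_{j=1}^K \frac{F_{\tau}^n(\lambda_j^h)}{\gamma + F_{\tau}^N(\lambda_j^h)} \big(P_h(g_\delta - g), \fy_j^h\big)\, \fy_j^h.
\end{equation*}

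Next, because $\{\fy_j^h\}_{j=1}^K$ is an $L^2\II$-orthonormal set, Parseval's identity yields
\begin{equation*}
\| \tilde U_n^\delta - \tilde U_n \|_{L^2\II}^2 = \sum_{j=1}^K \left( \frac{F_{\tau}^n(\lambda_j^h)}{\gamma + F_{\tau}^N(\lambda_j^h)} \right)^2 \big(P_h(g_\delta - g), \fy_j^h\big)^2 .
\end{equation*}
The key input is Corollary \ref{cor:fully-sol-2}, which bounds each multiplier by $c\min(\gamma^{-1}, t_n^{-\al})$ \emph{uniformly} in the discrete eigenvalue $\lambda_j^h$ (and in $h$, $\tau$, $n$). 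Pulling this constant out of the sum, the remaining factor collapses to $\|P_h(g_\delta - g)\|_{L^2\II}^2$. Finally I would use the $L^2\II$-stability of the projection $P_h$ together with the noise bound, namely $\|P_h(g_\delta - g)\|_{L^2\II} \le \|g_\delta - g\|_{L^2\II} \le \delta$, and take square roots to conclude. For the boundary index $n=0$ one has $F_{\tau}^0 \equiv 1$ and $F_{\tau}^N(\lambda)\ge 0$ by Corollary \ref{cor:fully-sol}, so the multiplier is at most $\gamma^{-1} = \min(\gamma^{-1}, t_0^{-\al})$, and the same argument applies.

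I do not anticipate any genuine obstacle here: the entire analytic difficulty, namely establishing that the regularized fully discrete multiplier is controlled by $\min(\gamma^{-1}, t_n^{-\al})$ irrespective of the eigenvalue, has already been dispatched in Corollary \ref{cor:fully-sol-2} (resting in turn on the two-sided estimate of Corollary \ref{cor:fully-sol} and Lemma \ref{lem:fully-sol}). The only point requiring care is that this multiplier bound is \emph{uniform} in $j$, so that it can legitimately be factored out of the spectral sum; this uniformity is precisely the stated independence of $\lambda$ in Corollary \ref{cor:fully-sol-2}, and it is also what makes the final estimate independent of the mesh size $h$ and step size $\tau$.
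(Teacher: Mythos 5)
Your proof is correct and follows essentially the same route as the paper's: difference of the spectral representations, the multiplier bound of Corollary \ref{cor:fully-sol-2} applied uniformly in $j$, Parseval's identity, and $L^2\II$-stability of $P_h$ together with the noise bound $\|g_\delta-g\|_{L^2\II}\le\delta$. Your explicit treatment of the endpoint $n=0$ (using $F_\tau^0\equiv 1$ and the nonnegativity of $F_\tau^N$, so that the multiplier is at most $\gamma^{-1}=\min(\gamma^{-1},t_0^{-\alpha})$) is in fact slightly more careful than the paper, which asserts the bound for all $0\le n\le N$ without comment.
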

\begin{proof}
From Corollary \ref{cor:fully-sol-2}, we have (${ \forall v\in X_h}$)
\begin{equation*}
\fl\|F_{h,\tau}^n \big(\gamma + F_{h,\tau}^N(T)\big)^{-1}v\|_{L^2\II}^2
=\sum_{j=1}^K \left[\frac{F_\tau^n(-\lambda_j^h)}{\gamma+F_\tau^N(-\lambda_j^h)}\right]^2 (v,\fy_j)^2\le c \min(\gamma^{-1},t_n^{-\alpha}) \|  v \|_{L^2\II}
\end{equation*}
Therefore for all $0 \le n\le N$
$$\| \tilde U_n^\delta - \tilde U_n  \|_{L^2\II} \le c  \min (\gamma^{-1}, t_n^{-\alpha}) \| g-g^\delta \| \le c \delta \min (\gamma^{-1}, t_n^{-\alpha}).$$
\end{proof}

\begin{lemma}\label{lem:fully-lem2}
Let  $\tilde U_n $  and $\tilde u_h(t)$ be solutions to  \eref{eqn:fully-back-2} and \eref{DBWP-1}, respectively.
Then there holds that
$$\| \tilde U_0 - \tilde u_h(0)  \|_{L^2\II} \le c \Big(\tau \gamma^{-1-(1-q/2)}\| u_0 \|_{\dH q} + h^2 \gamma^{-1} \| u_0 \|_{L^2\II}\Big).$$
\end{lemma}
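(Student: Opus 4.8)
The plan is to reduce everything to the common eigenbasis $\{\fy_j^h\}$ of $-\Delta_h$ and to play off the two complementary estimates of Lemma \ref{lem:fully-sol}. Putting $n=0$ in \eref{eqn:tildeUn} and $t=0$ in \eref{eqn:semi-sol-2}, and using $\Fht^0=F_h(0)=I$, I would first record the clean representations $\tilde U_0=(\gamma I+\Fht^N)^{-1}P_hg$ and $\tilde u_h(0)=(\gamma I+F_h(T))^{-1}P_hg$, so that the two solutions differ only through the time discretization of the terminal operator. The decisive move is to split the common datum as $P_hg=F_h(T)P_hu_0+e$, where $e:=(P_hF(T)-F_h(T)P_h)u_0$ is exactly the (projected) semidiscrete error of the \emph{forward} problem at $t=T$. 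This separates a smooth principal part from a rough remainder and will produce precisely the two terms of the claim.

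For the rough remainder I would deliberately \emph{not} use the smallness of $\tau$: both resolvents $(\gamma I+\Fht^N)^{-1}$ and $(\gamma I+F_h(T))^{-1}$ have $L^2\II$ operator norm at most $\gamma^{-1}$ (their symbols are positive by Corollary \ref{cor:fully-sol} and Lemma \ref{lem1}), so their difference applied to $e$ is bounded by $2\gamma^{-1}\|e\|_{L^2\II}$. The standard nonsmooth-data finite element estimate for forward subdiffusion then gives $\|e\|_{L^2\II}\le ch^2\|u_0\|_{L^2\II}$ at the fixed time $T$, yielding the contribution $ch^2\gamma^{-1}\|u_0\|_{L^2\II}$. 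For the principal part I would instead invoke the resolvent identity
\begin{equation*}
(\gamma I+\Fht^N)^{-1}-(\gamma I+F_h(T))^{-1}=(\gamma I+\Fht^N)^{-1}\big(F_h(T)-\Fht^N\big)(\gamma I+F_h(T))^{-1},
\end{equation*}
so the error equals $(\gamma I+\Fht^N)^{-1}(F_h(T)-\Fht^N)v_1$ with $v_1:=(\gamma I+F_h(T))^{-1}F_h(T)P_hu_0$.

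Estimate \eref{eqn:es-02} with $n=N$, $t_N=T$ reads, on the $j$-th mode, $|E_{\al,1}(-\lambda_j^hT^\al)-F_\tau^N(\lambda_j^h)|\le c\tau\lambda_j^h$, i.e.\ $\|(F_h(T)-\Fht^N)w\|_{L^2\II}\le c\tau\|\Delta_h w\|_{L^2\II}$; together with $\|(\gamma I+\Fht^N)^{-1}\|\le\gamma^{-1}$ this leaves me to bound $\|\Delta_h v_1\|_{L^2\II}$. Writing $E=E_{\al,1}(-\lambda_j^hT^\al)$, the symbol of $\Delta_h v_1$ is $\lambda_j^hE/(\gamma+E)$. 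The upper bound of Lemma \ref{lem1} gives $E\le c_T/\lambda_j^h$, whence $\lambda_j^hE/(\gamma+E)\le c_T(\lambda_j^h)^{q/2}E^{q/2}/(\gamma+E)\le c_T\gamma^{-(1-q/2)}(\lambda_j^h)^{q/2}$ after the elementary one-variable maximization $\sup_{E>0}E^{q/2}/(\gamma+E)\le\gamma^{-(1-q/2)}$, valid for $q\in[0,2]$. Summing over $j$ and using the discrete stability $\|(-\Delta_h)^{q/2}P_hu_0\|_{L^2\II}\le c\|u_0\|_{\dH q}$ (which for $q\in(1,2]$ uses the global quasi-uniformity via $\Delta_hR_h=P_h\Delta$ and an inverse inequality) gives $\|\Delta_h v_1\|_{L^2\II}\le c\gamma^{-(1-q/2)}\|u_0\|_{\dH q}$, hence the contribution $c\tau\gamma^{-2+q/2}\|u_0\|_{\dH q}$. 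Adding the two parts completes the proof.

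I expect the main obstacle to be organizational rather than technical: one must resist applying the resolvent identity to the \emph{entire} datum $P_hg$, because that forces the factor $\|\Delta_h(\gamma I+F_h(T))^{-1}e\|_{L^2\II}$ and, through the inverse inequality $\|\Delta_h e\|_{L^2\II}\le ch^{-2}\|e\|_{L^2\II}$, the spurious term $\tau\gamma^{-2}\|u_0\|_{L^2\II}$ in place of the claimed $h^2\gamma^{-1}\|u_0\|_{L^2\II}$ (and $\tau\gamma^{-2}$ would destroy convergence under the intended scaling $\tau=O(\gamma^2)$). Splitting off the rough forward error $e$ first, and controlling it by the crude $\gamma^{-1}$ resolvent-difference bound with \emph{no} $\tau$, is exactly what matches the stated right-hand side; the only genuinely delicate computation is the mode-by-mode maximization delivering the sharp regularization power $\gamma^{-(1-q/2)}$ together with the discrete $\dH q$-stability of $P_h$ for $q\in[0,2]$.
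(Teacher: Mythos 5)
Your proof is correct, but it follows a genuinely different route from the paper's. The paper splits the common datum by the Ritz projection, $P_hg=(P_h-R_h)g+R_hg$: the rough part $(P_h-R_h)g$ is killed by the crude $\gamma^{-1}$ resolvent bounds plus the approximation property and $\|g\|_{\dH2}\le cT^{-\alpha}\|u_0\|_{L^2\II}$ (no forward FEM error estimate needed), while the principal part $R_hg$ is treated mode-by-mode exactly as you treat yours, except that the powers of $\lambda_j^h$ are shifted onto the \emph{continuous} regularity of $g=F(T)u_0$ via $\Delta_hR_h=P_h\Delta$; this forces the paper into the two endpoint cases $q=0$ and $q=2$ (the latter requiring the somewhat delicate bound $\|\Delta_h^2R_hg\|_{L^2\II}\le c\|u_0\|_{\dH2}$ through an inverse inequality) followed by interpolation. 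You instead split off $F_h(T)P_hu_0$, absorb the $\lambda_j^h$ powers by the \emph{discrete} smoothing $\lambda_j^hE_{\al,1}(-\lambda_j^hT^\al)\le c_T$, and get all $q\in[0,2]$ at once from the elementary maximization $\sup_{E>0}E^{q/2}/(\gamma+E)\le\gamma^{-(1-q/2)}$ together with the discrete stability $\|(-\Delta_h)^{q/2}P_hu_0\|_{L^2\II}\le c\|u_0\|_{\dH q}$ — a cleaner and more uniform treatment of the principal term, and your closing remark correctly identifies why the resolvent identity must not be applied to the whole datum.

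The one caveat is your rough term: you invoke the nonsmooth-data forward estimate $\|u(T)-u_h(T)\|_{L^2\II}\le ch^2T^{-\alpha}\|u_0\|_{L^2\II}$, which the paper never states or proves; its own internal technique (the inverse-inequality trick with $\epsilon=1/\ell_h$ in Lemma \ref{lem:err-nonsmooth}) would only yield $ch^2\ell_h$, i.e.\ an extra logarithmic factor, which would then pollute the $h^2\gamma^{-1}$ term here and the log-free bound in Theorem \ref{thm:fully-error-rev}(a). The log-free version is indeed available in the cited literature (via the Laplace-transform/resolvent approach underlying \cite{JinLiZhou:2017sisc}), so your argument stands, but it imports an external result where the paper's Ritz-projection splitting keeps the proof self-contained.
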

where the generic constant $c$ is independent of  $\gamma$, $\delta$, $\tau$, $n$ and $h$.
\begin{proof}
By \eref{eqn:semi-sol-2}, we know the semidiscrete function $\tilde u_h(t)$ can be represented as
\begin{equation*}
\tilde u_h(0) = (\gamma I + F_h(T))^{-1} P_h g=\sum_{j=1}^K \frac{( g, \fy_j^h)}
{\gamma  + E_{\alpha,1}(-\lambda_j^hT^{\alpha})} {\varphi_j^h}.
\end{equation*}
This combined with \eref{eqn:tildeUn} results in the splitting
\begin{eqnarray*}
\tilde U_0 - \tilde u_h(0) &=  \Big((\gamma I + F_{h,\tau}^N)^{-1} (P_h-R_h)g  +  (\gamma I + F_h(T))^{-1} (R_h-P_h)g\Big) \\
&\quad + \Big((\gamma I + F_{h,\tau}^N)^{-1}  -  (\gamma I + F_h(T))^{-1}\Big)R_h g\\
& = I_1 + I_2.
\end{eqnarray*}
Using the approximation property of $P_h$ and $R_h$, Lemma \ref{lem:tildeuh-0}, Corollary \ref{cor:fully-sol-2}, and the regularity result in Theorem \ref{thm:reg-u},
we have an estimate of the term $I_1$:
\begin{equation*}
 \| I_1 \|_{L^2\II}\le c h^2 \gamma^{-1} \| u_0 \|_{L^2\II}.
\end{equation*}
To bound the term $I_2$, we note that
\begin{eqnarray*}
 \| I_2 \|_{L^2\II}^2 &= \sum_{j=1}^K \Big[\frac{1}{\gamma  + F_{\tau}^N(\lambda_j^h)}-
 \frac{1}
{\gamma  + E_{\alpha,1}(-\lambda_j^h T^{\alpha})}\Big]^2 (R_h g, \fy_j^h)^2 \\
&=\sum_{j=1}^K
 \Big|\frac{ [E_{\alpha,1}(-\lambda_j^hT^{\alpha})-F_{\tau}^N(\lambda_j^h)](\lambda_j^h)^{-1}}
{(\gamma  + F_{\tau}^N(\lambda_j^h))(\gamma  + E_{\alpha,1}(-\lambda_j^h T^{\alpha}))}\Big|^2 (\lambda_j^h)^{2} (R_h g, \fy_j^h)^2.%\\
%&\le c\tau^{2\alpha}\gamma^{-2}\sum_{j=1}^N
% \Big|\frac{1}
%{(\gamma  + E_{\alpha,1}(-\lambda_j^h T^{\alpha}))(\lambda_j^h)^q}\Big|^2 (\lambda_j^h)^{2+q} (R_h g, \fy_j^h)^2
%&\le c\tau^{2\alpha}\gamma^{-4} \|  \Delta_h^{q/2}\Delta_h R_h g \|_{L^2\II}^2 \le c\tau^{2\alpha}\gamma^{-4}  \| P_h \Delta g \|_{L^2\II}^2\\
%& \le c\tau^{2\alpha}\gamma^{-4}  \|  g \|_{\dH2}^2 \le c\tau^{2\alpha}\gamma^{-4} T^{-2\alpha}\|  u_0 \|_{L^2\II}.
\end{eqnarray*}
Then we apply Lemma \ref{lem:fully-sol} to obtain
\begin{equation}\label{eqn:der-1}
 \hskip-35pt \|I_2\|_{L^2\II}^2
 \le c \tau^{2}\gamma^{-2}\sum_{j=1}^K
 \Big|\frac{1}
{(\gamma  + E_{\alpha,1}(-\lambda_j^h T^{\alpha}))(\lambda_j^h)^{q/2}}\Big|^2 (\lambda_j^h)^{2+q} (R_h g, \fy_j^h)^2.
%&\le c\tau^{2\alpha}\gamma^{-4} \|  \Delta_h^{q/2}\Delta_h R_h g \|_{L^2\II}^2 \le c\tau^{2\alpha}\gamma^{-4}  \| P_h \Delta g \|_{L^2\II}^2\\
%& \le c\tau^{2\alpha}\gamma^{-4}  \|  g \|_{\dH2}^2 \le c\tau^{2\alpha}\gamma^{-4} T^{-2\alpha}\|  u_0 \|_{L^2\II}.
\end{equation}
For $q=0$, we use Lemma \ref{lem1} to deduce that
\begin{eqnarray*}
 \| I_2 \|_{L^2\II}^2
 &\le c\tau^{2}\gamma^{-4} \sum_{j=1}^K (\lambda_j^h)^{2} (R_h g, \fy_j^h)^2
 = c\tau^{2}\gamma^{-4} \|  \Delta_h R_h g \|_{L^2\II}^2 .
% \\
%&= c\tau^{2\alpha}\gamma^{-4}  \| P_h \Delta g \|_{L^2\II}^2=c\tau^{2\alpha}\gamma^{-4}  \| \Delta g \|_{L^2\II}^2
%\le c\tau^{2\alpha}\gamma^{-4} T^{-\alpha} \| u_0 \|_{L^2\II}.
%& \le c\tau^{2\alpha}\gamma^{-4}  \|  g \|_{\dH2}^2 \le c\tau^{2\alpha}\gamma^{-4} T^{-2\alpha}\|  u_0 \|_{L^2\II}.
\end{eqnarray*}
Using fact that $P_h \Delta= \Delta_h R_h$ and applying Theorem \ref{thm:reg-u}, we obtain
\begin{eqnarray*}
\hskip-30pt\| I_2 \|_{L^2\II}^2
&= c\tau^{2}\gamma^{-4}  \| P_h \Delta g \|_{L^2\II}^2=c\tau^{2}\gamma^{-4}  \| \Delta g \|_{L^2\II}^2
\le c\tau^{2}\gamma^{-4} T^{-\alpha} \| u_0 \|_{L^2\II}.
\end{eqnarray*}

Next we turn to the case that $q=2$. The estimate \eref{eqn:der-1} and Lemma \ref{lem1} imply that
\begin{eqnarray*}
 \| I_2\|_{L^2\II}^2
&\le c\tau^{2}\gamma^{-2}\sum_{j=1}^K
 \Big|\frac{1}
{ E_{\alpha,1}(-\lambda_j^h T^{\alpha}) \lambda_j^h}\Big|^2 (\lambda_j^h)^{4} (R_h g, \fy_j^h)^2\\
&\le c\tau^{2}\gamma^{-2} \sum_{j=1}^K  (\lambda_j^h)^{4} (R_h g, \fy_j^h)^2 = c\tau^{2}\gamma^{-2} \| \Delta_h^2 R_h g \|_{L^2\II}.
% \\
%&= c\tau^{2\alpha}\gamma^{-4}  \| P_h \Delta g \|_{L^2\II}^2=c\tau^{2\alpha}\gamma^{-4}  \| \Delta g \|_{L^2\II}^2
%\le c\tau^{2\alpha}\gamma^{-4} T^{-\alpha} \| u_0 \|_{L^2\II}.
%& \le c\tau^{2\alpha}\gamma^{-4}  \|  g \|_{\dH2}^2 \le c\tau^{2\alpha}\gamma^{-4} T^{-2\alpha}\|  u_0 \|_{L^2\II}.
\end{eqnarray*}
Now we use the fact that $P_h \Delta= \Delta_h R_h$ and triangle's inequality to derive
\begin{eqnarray}\label{eqn:der-2}
\eqalign{
  \| \Delta_h^2 R_h g \|_{L^2\II} &=  \| \Delta_h P_h \Delta g \|_{L^2\II} \\&\le \| \Delta_h (P_h-R_h) \Delta g \|_{L^2\II} + \| \Delta_h R_h \Delta g\|_{L^2\II}.}
\end{eqnarray}
The second term in \eref{eqn:der-2} can be bounded by
\begin{eqnarray}\label{eqn:der-3}
\eqalign{
  \| \Delta_h R_h \Delta g\|_{L^2\II} &=   \| P_h\Delta^2 g\|_{L^2\II} \\&=  \|  \Delta^2 g\|_{L^2\II}  = \| g \|_{\dH4} \le cT^{-\alpha} \| u_0 \|_{\dH2}.}
\end{eqnarray}
while the first term in \eref{eqn:der-2} can be bounded by using the standard inverse inequality and the approximation properties
\eref{eqn:err-Ph} and \eref{eqn:err-Rh} as
\begin{equation}\label{eqn:der-4}
\eqalign{
\| \Delta_h (P_h-R_h) \Delta g \|_{L^2\II} &\le ch^{-2}\| (P_h-R_h) \Delta g \|_{L^2\II} \\&\le c\| \Delta g \|_{H^2\II} \le cT^{-\alpha}\| u_0 \|_{\dH2}.}
\end{equation}
This leads to the desired estimate with $q=2$. Finally, the estimate for $q\in(0,2)$ follows immediately from interpolation.
\end{proof}

Using the similar argument, one can also derive an estimate of $  \tilde U_n - \tilde u_h(t_n)$ for $n\ge1$.
\begin{lemma}\label{lem:fully-lem3}
Let  $\tilde U_n $  and $\tilde u_h(t)$ be solutions to  \eref{eqn:fully-back-2} and \eref{DBWP-1}, respectively.
Then there holds that
$$\eqalign{\| \tilde U_n - \tilde u_h(t_n)  \|_{L^2\II} \le  c \Big(&\gamma^{-(1-q/2)}  (\tau t_{n}^{\alpha-1}+  \tau \min (\gamma^{-1}, t_n^{-\alpha})) \|  u_0 \|_{\dH{q}}\\
&+h^2\min(\gamma^{-1}, t_n^{-\alpha})\| u_0 \|_{L^2\II}\Big)}.$$
where the generic constant $c$ is independent of  $\gamma$, $\delta$, $\tau$, $n$ and $h$.
\end{lemma}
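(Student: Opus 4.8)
The plan is to mirror the proof of Lemma \ref{lem:fully-lem2} (the case $n=0$), the difference being that the numerators now carry the extra fully discrete factor $F_{h,\tau}^n$ (respectively $F_h(t_n)$), which will produce both the $\min(\gamma^{-1},t_n^{-\alpha})$ gains and the new term $\tau t_n^{\alpha-1}$. Using the representations \eref{eqn:tildeUn} and \eref{eqn:semi-sol-2}, I first insert the Ritz projection $R_h g$ and split
\begin{equation*}
\eqalign{
\tilde U_n - \tilde u_h(t_n) &= \Big(F_{h,\tau}^n(\gamma I + F_{h,\tau}^N)^{-1} (P_h-R_h)g + F_h(t_n)(\gamma I + F_h(T))^{-1}(R_h-P_h)g\Big)\\
&\quad + \Big(F_{h,\tau}^n(\gamma I + F_{h,\tau}^N)^{-1} - F_h(t_n)(\gamma I + F_h(T))^{-1}\Big)R_h g =: I_1 + I_2.
}
\end{equation*}
For $I_1$ I would bound both solution operators by $c\min(\gamma^{-1},t_n^{-\alpha})$ using Corollary \ref{cor:fully-sol-2} and Lemma \ref{lem:tildeuh-0}, and estimate $\|(P_h-R_h)g\|_{L^2\II}\le ch^2\|g\|_{\dH2}\le ch^2 T^{-\alpha}\|u_0\|_{L^2\II}$ via \eref{eqn:err-Ph}, \eref{eqn:err-Rh} and the smoothing of $g=F(T)u_0$ (Theorem \ref{thm:reg-u}); this gives exactly the spatial term $h^2\min(\gamma^{-1},t_n^{-\alpha})\|u_0\|_{L^2\II}$.

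The heart of the argument is $I_2$, which I would treat spectrally. Writing $a_m=F_\tau^m(\lambda_j^h)$ and $E_m=E_{\al,1}(-\lambda_j^h t_m^\al)$, the scalar multiplier decomposes as
\begin{equation*}
\frac{a_n}{\gamma+a_N}-\frac{E_n}{\gamma+E_N} = \frac{a_n-E_n}{\gamma+a_N} + \frac{E_n}{\gamma+E_N}\cdot\frac{E_N-a_N}{\gamma+a_N}.
\end{equation*}
For the denominator I would combine $0<a_N$ with Corollary \ref{cor:fully-sol} to get $(\gamma+a_N)^{-1}\le\min(\gamma^{-1},c(1+\lambda_j^h T^\al))$, and then interpolate $\min(A,B)\le A^{1-q/2}B^{q/2}$ together with $\lambda_j^h\ge c$ to obtain $(\gamma+a_N)^{-1}\le c\gamma^{-(1-q/2)}(\lambda_j^h)^{q/2}$. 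The time-discretization estimate \eref{eqn:es-02} of Lemma \ref{lem:fully-sol}, namely $(\lambda_j^h)^{-1}|E_m-a_m|\le c\tau t_m^{\al-1}$, applied with $m=n$ in the first summand (producing $\tau t_n^{\al-1}$) and with $m=N$, $t_N=T$ in the second (producing a clean $\tau$), together with the pointwise bound $E_n/(\gamma+E_N)\le c\min(\gamma^{-1},t_n^{-\al})$ from the proof of Lemma \ref{lem:tildeu-0}, yields the multiplier estimates $c\gamma^{-(1-q/2)}(\lambda_j^h)^{1+q/2}\tau t_n^{\al-1}$ and $c\gamma^{-(1-q/2)}(\lambda_j^h)^{1+q/2}\tau\min(\gamma^{-1},t_n^{-\al})$. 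Squaring, summing against $(R_h g,\fy_j^h)^2$, and recognizing $\sum_j(\lambda_j^h)^{2+q}(R_h g,\fy_j^h)^2=\|\Delta_h^{(2+q)/2}R_h g\|_{L^2\II}^2$ reduces each piece to a single discrete regularity bound for $R_h g$.

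The main obstacle is precisely this discrete high-order estimate $\|\Delta_h^{(2+q)/2}R_h g\|_{L^2\II}\le cT^{-\al}\|u_0\|_{\dH q}$, since for $q=2$ the order exceeds the approximation range of $X_h$. I would settle the endpoints exactly as in Lemma \ref{lem:fully-lem2}: for $q=0$, use $\Delta_h R_h=P_h\Delta$ to write $\|\Delta_h R_h g\|_{L^2\II}=\|P_h\Delta g\|_{L^2\II}\le\|g\|_{\dH2}$; for $q=2$, write $\|\Delta_h^2 R_h g\|_{L^2\II}=\|\Delta_h P_h\Delta g\|_{L^2\II}$ and split by the triangle inequality into $\|\Delta_h R_h\Delta g\|_{L^2\II}=\|g\|_{\dH4}$ and $\|\Delta_h(P_h-R_h)\Delta g\|_{L^2\II}$, the latter controlled by the inverse inequality and \eref{eqn:err-Ph}, \eref{eqn:err-Rh}; in both cases $\|g\|_{\dH{2+q}}\le cT^{-\al}\|u_0\|_{\dH q}$ for $g=F(T)u_0$ (Theorem \ref{thm:reg-u}) closes the bound. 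The intermediate range $q\in(0,2)$ then follows by interpolation, and assembling $I_1$ with the two pieces of $I_2$ gives the claimed estimate.
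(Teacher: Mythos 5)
Your proposal is correct and follows essentially the same route as the paper's proof: the same splitting into a projection-difference term $I_1$ and an operator-difference term $I_2$ applied to $R_h g$, the same spectral treatment of $I_2$ via Lemma \ref{lem:fully-sol} (estimate \eref{eqn:es-02}) and the smoothing bounds of Corollaries \ref{cor:fully-sol}--\ref{cor:fully-sol-2}, and the same endpoint arguments at $q=0,2$ (using $\Delta_h R_h = P_h\Delta$ and the inverse inequality) with interpolation in between. Your two-term algebraic decomposition of the scalar multiplier is just a condensed form of the paper's three-term splitting (whose first two terms merge into $(a_n-E_n)/(\gamma+E_N)$), so the difference is purely cosmetic.
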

%$$\uppercase\expandafter{\romannumeral1}$$
\begin{proof}
First of all, we split $\tilde U_n - \tilde u_h(t_n)$ into two terms
\begin{eqnarray*}
\tilde U_0 - \tilde u_h(0) &=  \Big(F_{h,\tau}^n(\gamma I + F_{h,\tau}^N)^{-1} (P_h-R_h)g  + F_h(t_n) (\gamma I + F_h(T))^{-1} (R_h-P_h)g\Big) \\
&\quad + \Big(F_{h,\tau}^n(\gamma I + F_{h,\tau}^N)^{-1}  - F_h({t_n}) (\gamma I + F_h(T))^{-1}\Big)R_h g\\
& = I_1 + I_2.
\end{eqnarray*}
The approximation property of $P_h$ and $R_h$, Theorem \ref{thm:reg-u}, Lemma \ref{lem:tildeuh-0} and Corollary \ref{cor:fully-sol-2}
lead to an estimate of the term $I_1$:
\begin{equation*}
 \| I_1 \|_{L^2\II}\le c h^2 \min(\gamma^{-1},t_n^{-\alpha}) \| u_0 \|_{L^2\II}.
\end{equation*}

Next, we turn to the $I_2$, which can be split into  three components:
\begin{eqnarray*} 	 	
\hskip-0.3in \| I_2 \|_{L^2\II}^2 &= \sum_{j=1}^K \Big[\frac{F_{\tau}^n(\lambda_j^h)}{\gamma  + F_{\tau}^N(\lambda_j^h)}-
 \frac{E_{\alpha,1}(-\lambda_j^h t_n^{\alpha})}
{\gamma  + E_{\alpha,1}(-\lambda_j^h T^{\alpha})}\Big]^2 (R_h g, \fy_j^h)^2 \\
&\leq c\sum_{j=1}^K \big|\frac{\gamma[F_\tau^n(\lambda_j^h)-E_{\al,1}(-\lambda_j^h {t_n^\al})](\lambda_j^h)^{-1}}
{(\gamma+F_\tau^N(\lambda_j^h))(\gamma+E_{\al,1}(-\lambda_j^h T^\al)}\big|^2(\lambda_j^h)^2(R_h g, \fy_j^h)^2\\
&+c\sum_{j=1}^K \big|\frac{F_\tau^N(\lambda_j^h)[F_\tau^n(\lambda_j^h)-E_{\al,1}(-\lambda_j^h{t_n^\al})](\lambda_j^h)^{-1}}
{(\gamma+F_\tau^N(\lambda_j^h))(\gamma+E_{\al,1}(-\lambda_j^h T^\al)}\big|^2(\lambda_j^h)^2(R_h g, \fy_j^h)^2\\
&+c\sum_{j=1}^K \big|\frac{F_\tau^n(\lambda_j^h)[(E_{\al,1}(-\lambda_j^hT^\al)-F_\tau^N(\lambda_j^h)](\lambda_j^h)^{-1}}
{(\gamma+F_\tau^N(\lambda_j^h))(\gamma+E_{\al,1}(-\lambda_j^h T^\al)}\big|^2(\lambda_j^h)^2(R_h g, \fy_j^h)^2\\
&=:\sum_{k=1}^3 I_{2,k}.
\end{eqnarray*}
The estimates of $I_{2,1}$ and $I_{2,2}$ follows directly from the proof of Lemma \ref{lem:fully-lem2}, i.e.,
$$ I_{2,1} + I_{2,2} \le  c\tau^2 t_{n}^{2\alpha-2} \gamma^{-(2-q)}\| u_0 \|_{\dH q} .$$
Now it remains to bound $I_3$. Here we apply Lemma \ref{lem:fully-sol} and Corollary \ref{cor:fully-sol-2}, and obtain
\begin{equation*} 	 	
\hskip-0.7in  I_{2,3} \leq  c\tau^2T^{2\al-2}\min\{\gamma^{-2},{t_n^{-2\al}}\}\sum_{j=1}^K \big|\frac{1}
{(\gamma+E_{\al,1}(-\lambda_j^h T^\al))(\lambda_j^h)^{q/2}}\big|^2(\lambda_j^h)^{2+q}(R_h g, \fy_j^h)^2.
\end{equation*}
Then the estimates \eref{eqn:der-1}--\eref{eqn:der-4} imply
\begin{equation*}
I_{2,3} \leq c \tau^2 \gamma^{-(2-q)}\min\{\gamma^{-2},t_n^{-2\al}\}\| u_0 \|_{\dH q}^2 .
\end{equation*}
This completes the proof of the lemma.
\end{proof}

Then Lemmas \ref{lem:fully-lem1}--\ref{lem:fully-lem3} together with Theorem \ref{thm:nonsmooth} and Corollary \ref{cor:err-tildeu-L2}
result in the main theorem of this section.
\begin{theorem}\label{thm:fully-error-rev}
Let $u$ be the solution to the backward subdiffusion problem \eref{eqn:fde}, and $\tilde U_n^\delta $  be the solution to the (regularized) fully discrete scheme \eref{eqn:fully-back}.
Then we have the following error estimate:
\begin{itemize}
\item[(a)] In case that $u_0\in \dH2$, there holds
\begin{eqnarray*}
\fl\| \tilde U_n^\delta - u(t_n)  \|_{L^2\II} \le c\left\{\eqalign{
& \gamma+(h^2+\tau+\delta)\min(\gamma^{-1}, t_n^{-\alpha})+
\tau t_{n}^{\alpha-1}  , n\ge1; \\
&\gamma+(h^2+\tau+\delta)\gamma^{-1},\qquad n=0.}\right.
\end{eqnarray*}
\item[(b)] In case that $u_0\in L^2\II$, there holds for $n\ge1$
\begin{equation*}
\fl\| \tilde U_n^\delta - u(t_n)  \|_{L^2\II} \le c\Big( \gamma t_{n}^{-\alpha}+\big(\delta+
  \gamma^{-1}(h^2 \ell_h +\tau)\big)\min(\gamma^{-1},t_n^{-\alpha}) + \gamma^{-1}\tau t_n^{\alpha-1}\Big).
 \end{equation*}
Meanwhile, for $n=0$, there holds
\begin{equation*}
 \fl \|\tilde U_0^\delta-u(0)\|_{L^2\II} \rightarrow 0,\quad \mbox{as}~~\gamma\rightarrow0, ~\frac\delta\gamma\rightarrow0,
  ~ \frac{h\ell_h^{\frac12}}{\gamma}\rightarrow0~\mbox{and}~ \frac{\tau^{\frac12}}{\gamma}\rightarrow0.
\end{equation*}
\end{itemize}
\end{theorem}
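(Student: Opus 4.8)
The plan is to prove the theorem by a single triangle-inequality decomposition that isolates the three sources of error — noise, time stepping, and the combined regularization/spatial error — each already controlled by one of the quoted results. Writing $\tilde U_n$ for the noise-free fully discrete solution of \eref{eqn:fully-back-2} and $\tilde u_h(t_n)$ for the noise-free semidiscrete solution of \eref{DBWP-1}, I would split
\begin{equation*}
\tilde U_n^\delta - u(t_n) = \big(\tilde U_n^\delta - \tilde U_n\big) + \big(\tilde U_n - \tilde u_h(t_n)\big) + \big(\tilde u_h(t_n) - u(t_n)\big).
\end{equation*}
The first bracket carries the entire dependence on the noise and is bounded by Lemma \ref{lem:fully-lem1} as $c\delta\min(\gamma^{-1},t_n^{-\alpha})$. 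The second bracket is the pure time-stepping error, controlled by Lemma \ref{lem:fully-lem3} for $n\ge1$ and by Lemma \ref{lem:fully-lem2} for $n=0$. The third bracket is the noise-free semidiscrete error, i.e.\ the $\delta=0$ instance of Theorem \ref{thm:nonsmooth}.

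Next I would specialize the regularity index. For part (a) take $q=2$: the three ingredients become $\delta\min(\gamma^{-1},t_n^{-\alpha})$, then $\tau t_n^{\alpha-1}+(\tau+h^2)\min(\gamma^{-1},t_n^{-\alpha})$ (since $\gamma^{-(1-q/2)}=1$ and $\ell_h^{1-q/2}=1$), and $\gamma+h^2\min(\gamma^{-1},t_n^{-\alpha})$; summing and merging the two $h^2$-weighted $\min$ terms reproduces the $n\ge1$ line, while the $n=0$ line follows identically on setting $t_0=0$ so that $\min(\gamma^{-1},t_0^{-\alpha})=\gamma^{-1}$ and on using Lemma \ref{lem:fully-lem2} in place of Lemma \ref{lem:fully-lem3}. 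For part (b) with $n\ge1$ take $q=0$: the ingredients are $\delta\min(\gamma^{-1},t_n^{-\alpha})$, then $\gamma^{-1}\tau t_n^{\alpha-1}+\gamma^{-1}\tau\min(\gamma^{-1},t_n^{-\alpha})+h^2\min(\gamma^{-1},t_n^{-\alpha})$, and $\min(1,\gamma t_n^{-\alpha})+\gamma^{-1}h^2\ell_h\min(\gamma^{-1},t_n^{-\alpha})$; bounding $\min(1,\gamma t_n^{-\alpha})\le\gamma t_n^{-\alpha}$ and absorbing the bare term $h^2\min(\gamma^{-1},t_n^{-\alpha})$ into $\gamma^{-1}h^2\ell_h\min(\gamma^{-1},t_n^{-\alpha})$ (legitimate since $\gamma\le1$ and $\ell_h\ge1$) yields exactly the claimed estimate.

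The genuinely delicate point is the nonsmooth endpoint $n=0$, where no single term carries a convergence rate, so one cannot simply sum rate estimates. There I would argue term by term that each contribution vanishes under the four scalings: Lemma \ref{lem:fully-lem1} gives $\delta\gamma^{-1}=\delta/\gamma\to0$; Lemma \ref{lem:fully-lem2} with $q=0$ gives $\tau\gamma^{-2}=(\tau^{1/2}/\gamma)^2\to0$ together with $h^2\gamma^{-1}=\gamma\,(h/\gamma)^2\le\gamma\,(h\ell_h^{1/2}/\gamma)^2\to0$; and the noise-free semidiscrete error $\tilde u_h(0)-u(0)$ is split as $(\tilde u_h(0)-\tilde u(0))+(\tilde u(0)-u(0))$, the first tending to zero by Lemma \ref{lem:err-nonsmooth} under $h\ell_h^{1/2}/\gamma\to0$ and the second by Corollary \ref{cor:err-tildeu-L2} as $\gamma\to0$. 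Assembling these through the triangle inequality gives the stated convergence. The main obstacle is therefore bookkeeping rather than analysis: tracking the $\min(\gamma^{-1},t_n^{-\alpha})$ weights through the $q=0,2$ substitutions and correctly merging the $h^2$ and $\tau$ contributions, and in particular recognizing that at $t_0=0$ the rate-bearing estimates degenerate and must be replaced by the qualitative, scaling-dependent convergence arguments just described.
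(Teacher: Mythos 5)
Your proposal is correct and follows essentially the same route as the paper, which proves the theorem exactly by combining Lemma \ref{lem:fully-lem1}, Lemmas \ref{lem:fully-lem2}--\ref{lem:fully-lem3}, Theorem \ref{thm:nonsmooth} (with $\delta=0$), and Corollary \ref{cor:err-tildeu-L2} through the same triangle-inequality splitting. In fact, your write-up supplies the $q=0,2$ bookkeeping and the term-by-term convergence argument at $t_0=0$ that the paper leaves implicit.
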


{
\begin{remark}\label{rem:fully-interm}
For the intermediate case that $u_0 \in \dH q$, $q\in(0,2)$, the error estimate  follows  from Lemma \ref{lem:fully-lem1}--\ref{lem:fully-lem3},
Theorem \ref{thm:nonsmooth}, and the real interpolarion. In particular, for $n=0$, we have
\begin{equation*}
\| \tilde U_0^\delta - u(0)  \|_{L^2\II} \le c\Big( \gamma^{q/2} + \delta \gamma^{-1}
+\gamma^{-2+q/2}(h^2\ell_h^{1-q/2} +\tau) \Big).
 \end{equation*}
Then one may obtain the optimal convergence rate $O(\delta^{\frac{q}{q+2}})$ by the \textit{a priori} choices: $$\gamma=O(\delta^{\frac{2}{q+2}}),~~
h\ell_h^{\frac12-\frac{q}4} = O(\delta^{\frac{2}{q+2}})~~\mbox{and}~ ~\tau=O(\delta^{\frac{2}{q+2}}).$$
Meanwhile, for $n\ge1$, there holds the estimate
\begin{eqnarray*}
\hskip-0.8in\| \tilde U_n^\delta - u(t_n)  \|_{L^2\II} &\le  c\Big(  \min(\gamma^{q/2}, \gamma t_n^{-(1-q/2)\alpha})
+\Big(\gamma^{-(1-q/2)}(h^2\ell_h^{1-q/2}+\tau) + \delta\Big)\min(\gamma^{-1},t_n^{-\alpha})\\
&\qquad+\gamma^{-(1-q/2)}  \tau t_{n}^{\alpha-1} \Big).
\end{eqnarray*}
Asymptotically, the \textit{a priori} choice, that $\gamma=O(\delta)$, $h\ell_h^{\frac12-\frac{q}4}=O(\delta^{1-\frac{q}4})$ and $\tau=O(\delta^{2-\frac{q}2})$,
leads to the optimal convergence rate $O(\delta)$.
\end{remark}}

{
\begin{remark}\label{rem:fully-2}
Theorem \ref{thm:fully-error-rev} and Remark \ref{rem:fully-interm} indicates the correct way
to scale noise level $\delta$, regularization parameter $\gamma$, and mesh sizes $h$ and $\tau$, with different types of problem data.
The novel argument uses the smoothing properties of fully discrete solution operators,
and the nonstandard error estimate for the direct problem
\cite{JinLiZhou:2017sisc, JinLiZhou:var}.
\end{remark}}

\section{Numerical results}\label{sec:numerics}
In this section, we shall illustrate the theoretical results by presenting some {1-D and 2-D} examples. Throughout, we consider the observation data
\begin{equation*}
g_\delta=u(T)+\varepsilon \delta \sup_{x\in\Omega}u(x,T),
\end{equation*}
$\varepsilon$ is generated following the standard Gaussian distribution and $\delta$ denotes the (relative) noise level.
Throughout this section, we fix $T=1$.
%The analytic solution of the forward subdiffusion problem could be computed by evaluating a (truncated)
%series involving the Mittag-Leffler function \eref{eqn:op}, which could be computed by using the
%algorithm developed in \cite{SeyboldHilfer:2008}.

We consider the one-dimensional subdiffusion problem in the unit interval  $\Omega=(0,1)$.
%The exact solution could be expressed as an infinite series involving the Mittag-Leffler function \eref{eqn:op}, which could be computed by using the
%algorithm developed in \cite{SeyboldHilfer:2008}.
We use the standard piecewise linear FEM with uniform mesh size $h=1/(K+1)$
for the space discretization, and the CQ-BE method with uniform step size $\tau=T/N$ for the time discretization.
Although the fully discrete solution can be efficiently computed by using conjugate gradient method, {in 1-D example}
we apply the following direct method by spectral decomposition to avoid any iteration error.

For the uniform mesh size $h=1/(K+1)$, the eigenparis of $-\Delta_h$ has the closed form:
\begin{equation}\label{eqn:semi-eigenpair}
\lambda^h_j=\frac{6}{h^2}\frac{1-\cos(j\pi h)}{2+\cos(j\pi h)},
\quad \fy^h_j(x_i)=\sqrt{2}\sin(j\pi x_i),\quad i,j=1,2,\cdots,K.
\end{equation}
The semidiscrete solution of the forward problem can be computed by using the solution representation \eref{Duhamel_o}
involving the Mittag-Leffler function \eref{eqn:ML}, which could be evaluated by the
algorithm developed in \cite{SeyboldHilfer:2008}.
We compute the observation data $u(T)$ and reference solution $u(t)$ with $t\in[0,T)$
by using the semidiscrete scheme with a very fine mesh size, i.e., $h=1/2000$.

For each example, we measure the accuracy of the approximation $\tilde{u}_h^\delta (t)$ and $\tilde{U}_n^\delta$ by the normalized error
$\| u(t)-\tilde{u}_h^\delta(t)\|_{L^2\II}/\| u(t) \|_{L^2\II}$ and $\| u(t_n)-\tilde{U}_n^\delta \|_{L^2\II} / \|u(t_n)\|_{L^2\II}$.
The normalization enables us to observe the behaviour of the error with respect to $\alpha$ and $t$.
 \vskip5pt

%where $h=1/K$, $K-1$ is the dimension of $S_h$ and $\fy_j^h$ is the linear function connecting the given node points.
\paragraph{\bf Example (a): Smooth initial data.} We start with the smooth initial condition
$$u_0(x)=x(1-x) \in \dH2 = H^2\II\cap H_0^1\II,$$
and source term $f\equiv0$. We compute the solution of the (regularized) semidiscrete scheme \eref{DBWP} by
\begin{equation}\label{eqn:numer-01}
 \widetilde u_h^\delta (t) = \sum_{j=1}^{K}  \frac{E_{\alpha,1}(\lambda_{j}^h t^\alpha)}{\gamma + E_{\alpha,1}(\lambda_{j}^h T^\alpha)} (g_\delta,\fy_j^h){\varphi_j^h},
 \end{equation}
where the eigenpairs $(\lambda_{j}^h, \fy_{j}^h)$, for $j=1,\ldots,K-1$, are given by  \eref{eqn:semi-eigenpair}.
%
%For each example, we measure the accuracy of the approximation $\tilde{u}_h(t)$ and $\tilde{U}_n$ by the normalized error $\| u(t)-\tilde{u}_h(t)\|/\| u(t) \|$ and $\| u(t_n)-\tilde{U}_n\| / \|u(t_n)\|$. The normalization enables us to observe the behaviour of the error with respect to $\alpha$ and $t$.
In Figure \ref{semiplot}, we plot the error of numerical solution \eref{eqn:numer-01}, with different fractional order $\alpha$
and at different time.
By Theorem \ref{thm:err-smooth} and  Remark \ref{rem:rate-semi-smooth},  we compute the $\tilde u_h^\delta(0)$ with
$h=\sqrt{\delta}$, $\gamma=\sqrt\delta$ for a given $\delta$; and compute the $\tilde u_h^\delta(t)$ for $t>0$ with
$h=\sqrt{\delta}$, $\gamma=\delta$ for a given $\delta$. Numerical experiments show an empirical convergence rate of $\Or(\sqrt{\delta})$ for $t=0$,
and $\Or(\delta)$ for $t>0$.
This  coincides with our theoretical result (Theorem \ref{thm:err-smooth}).
%We see that in the figure the slopes of the error curves are approximately 0.5 and 1, respectively.
%We compute the observation data and reference solution by using the semidiscrete scheme with a very fine mesh size , i.e., h=1/2000.
%(And the observation data $g_\delta$ is computed with a uniform mesh size with $h=1/2000$).

\begin{figure}[htbp]
\centering
\includegraphics[scale=0.3]{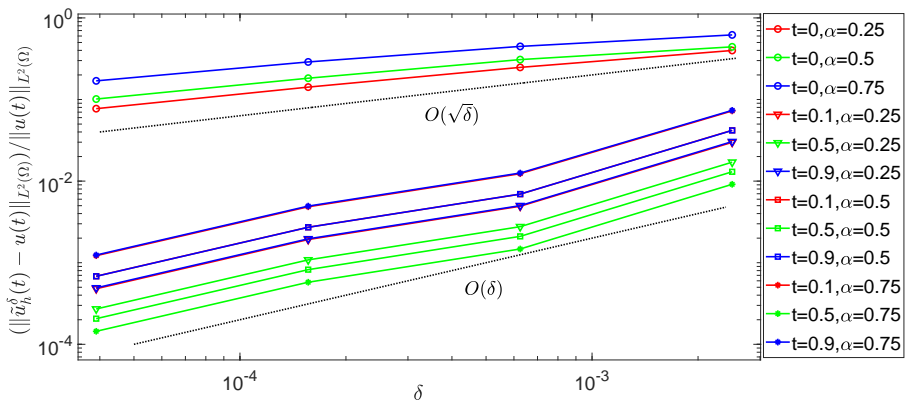}
\caption[plain]{Plot of $\| u(t) - \tilde u_h^\delta (t)  \|_{L^2\II}/\| u(t)\|_{L^2\II}$ with $h=\gamma = \sqrt\delta$ for $t=0$;\\ and
$h=\sqrt\delta$, $\gamma = \delta$ for $t_n>0$. }\label{semiplot}
\end{figure}

In Figure \ref{fullyplot}, we plot the error of numerical reconstruction by the fully scheme
\eref{eqn:fully-back}, with different $\alpha$ and at different time. In our experiments, we compute fully discrete solution $\tilde U_n^\delta$  by
$$  \tilde U_n^\delta = \sum_{j=1}^{K} \frac{F_{\tau}^n(\lambda_j^h)}{\gamma  + F_{\tau}^N(\lambda_j^h)} (P_h g_\delta, \fy_j^h){\fy_j^h}.$$
%where we need to solve $K-1$ time stepping schemes of direct initial value problem of ODEs.
Then Theorem \ref{thm:fully-error-rev} (i) implies for $u_0\in \dot{H}^2\II$
\begin{eqnarray*}
\fl\| \tilde U_n^\delta - u(t_n)  \|_{L^2\II} \le c\left\{\eqalign{
& \gamma+(h^2+\tau+\delta)\min(\gamma^{-1}, t_n^{-\alpha})+
\tau t_{n}^{\alpha-1} , n\ge1; \\
&\gamma+(h^2 +\tau+\delta)\gamma^{-1}  ,\qquad n=0.}\right.
\end{eqnarray*}
For $t=0$, we let $h=\gamma=\sqrt{\delta}$ and $\tau={\delta}$, and then we observe that the empirical convergence rate is $\Or(\sqrt{\delta})$.
Meanwhile, for $t>0$, and we let $h=\sqrt{\gamma}=\sqrt{\delta}=\sqrt{\tau}$.
The empirical convergence rate is $\Or(\delta)$.
These observation agrees well with our theoretical results in Theorem  \ref{thm:fully-error-rev} (i).

\begin{figure}[htbp]
\centering
\includegraphics[scale=0.3]{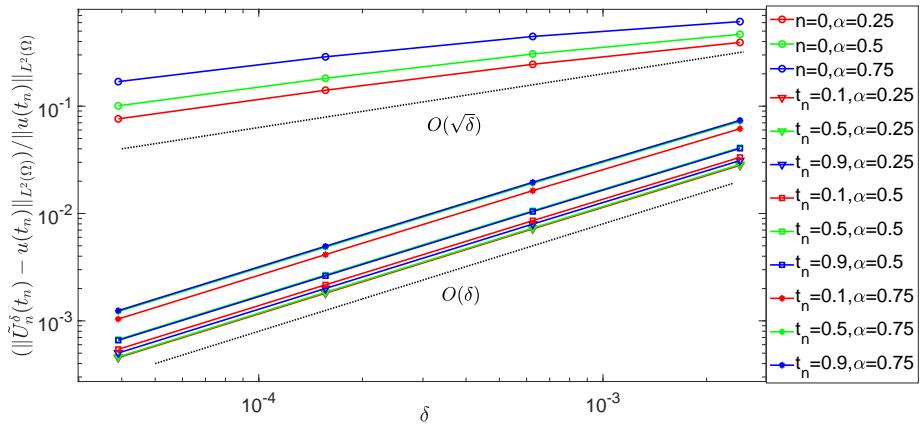}
\caption[plain]{Plot of $\| u(t_n) - \tilde U_n^\delta (t_n)  \|_{L^2\II}/\| u(t_n)\|_{L^2\II}$ with $h=\sqrt\delta$, $\tau = \delta$ \\and $\gamma =  \sqrt\delta$ for $t_n=0$; and
$h=\sqrt\delta$, $\tau=\delta$, $\gamma = \delta$ for $t_n>0$.}\label{fullyplot}
\end{figure}

\paragraph{\bf Example (b): Nonsmooth initial data.}
Now we test  numerical experiments with a step initial condition:
\begin{equation*}
u_0(x)=\left\{\eqalign{0,\,\,\,\, 0\leq x\leq \frac{1}{2},\\
1,\,\,\,\, \frac{1}{2}< x< 1 .
}\right.
\end{equation*}
Since $u_0$ is discontinuous and piecewise smooth,
it is easy to see that $u_0\in H^{\frac12-\epsilon}\II$ for any $\epsilon\in(0,\frac12]$.

According to Theorem \ref{thm:nonsmooth}, we have the error estimate of the semidiscrete solution at $t=0$:
\begin{equation*}
\hskip-0.6in\|\tildu_h(t)-u(t)\|_{L^2\II}\le c\big(  \gamma^{q/2}
+h^2\ell_h^{1-q/2} \gamma^{-(2-q/2)} + \delta  \gamma^{-1}\big),\quad \mbox{with}~~ u_0\in \dH q.
\end{equation*}
This implies that the convergence rate
may deteriorate when the initial data gets worse.
This is fully supported by empirical results showed in Table \ref{table1},
where we present the $L^2$-error of the semidiscrete solution at $t=0$.
In the computation, we let $h=O(\delta^{\frac45})$ and $\gamma=O(\delta^{\frac45})$
in order to balance to noise level, regularization parameter
and the discretization error. Then the empirical convergence rate
is $O(\delta^\frac15)$, which is consistent with the theoretical results. % in Corollary \ref{cor:nonsmooth}.

Meanwhile, for a fixed $t>0$, we have the error estimate (cf. Theorem \ref{thm:nonsmooth})
\begin{equation*}
\|\tildu_h(t)-u(t)\|_{L^2\II}\le c\big( \gamma \,t^{ q\alpha/2}
+\gamma^{-(1-q/2)}h^2\ell_h^{1-q/2} + \delta \big)  t^{-\alpha}.
\end{equation*}
This implies the almost optimal scaling $h=O(\delta^{\frac78})$ and $\gamma=O(\delta)$,
and the resulting optimal convergence rate $O(\delta)$.
This is supported by the numerical results shown in Table \ref{table2}.

For the numerical reconstruction by the fully discrete scheme \eref{eqn:fully-back},
we recall the result in Remark \ref{rem:fully-interm}. To compute $\tilde U^\delta_0$,
we let $\gamma=O(\delta^{\frac45})$, $h=O(\delta^\frac45)$ and $\tau=O(\delta^\frac85)$, for a given $\delta$.
Then our theory indicates an convergence rate of $O(\delta^{\frac15})$, which agrees well with the numerical results
in Table \ref{table3}.
On the other hand, to compute $\tilde U^\delta_n$ for a fixed $t_n>0$ and $\delta>0$, we let $h = \delta^{\frac74}$, $\tau=O(\delta^{\frac78})$
and $\gamma=O(\delta)$. Then the empirical convergence rate is close to $O(\delta)$,
which fully supports our theoretical estimates {in Table \ref{table4}}.

\begin{table}[htbp]
\centering
\caption{Example (b): error of $\tilde{u}_h^\delta(0)$, with $\delta=1/M$, $h=\gamma=\delta^{\frac45}$.}\label{table1}\vskip5pt
\begin{tabular}{|c|cccc|c|}
\hline
$\al\backslash M$&40&80&160&320&\mbox{Rate}($\delta$)\\
\hline
0.25&4.68e-1&4.07e-1&3.48e-1&2.95e-1&0.22(0.20)\\
0.5&5.07e-1&4.46e-1&3.84e-1&3.27e-1&0.21(0.20)\\
0.75&5.70e-1&5.18e-1&4.59e-1&3.98e-1&0.17(0.20)\\
\hline
\end{tabular}
\end{table}

\begin{table}[htbp]
\centering
\caption{Example (b):  error of $\tilde{u}_h^\delta(t)$ at different $t$ with $\delta=1/M$, $h=\delta^{\frac78}$, $\gamma=\delta/5$.}\label{table2}\vskip5pt
\begin{tabular}{|c|c|cccc|c|}
\hline
$\al$&$t \backslash M$&40&80&160&320&\mbox{Rate}($\delta$)\\
\hline
   &0.1&7.91e-3&4.34e-3&2.30e-3&1.20e-3&0.91(1.00)\\
0.5&0.5&3.51e-3&1.93e-3&1.02e-3&5.33e-4&0.91(1.00)\\
   &0.9&2.41e-3&1.33e-3&7.13e-4&3.73e-4&0.90(1.00)\\
\hline
\end{tabular}
\end{table}

\begin{table}[htbp]
\centering
\caption{Example (b): error of $\tilde{U}_0^\delta$, with $\delta=1/M$, $h=\gamma=\delta^{\frac45}$, $\tau=\delta^{\frac85}$.}\label{table3}\vskip5pt
\begin{tabular}{|c|cccc|c|}
\hline
$\al \backslash M$&40&80&160&320&\mbox{Rate}($\delta$)\\
\hline
0.25 &4.70e-1&4.07e-1&3.48e-1&2.96e-1&0.22(0.20)\\
0.5 &5.08e-1&4.47e-1&3.85e-1&3.28e-1&0.21(0.20)\\
0.75 &5.70e-1&5.17e-1&4.59e-1&3.98e-1&0.17(0.20)\\
\hline
\end{tabular}
\end{table}

\begin{table}[htbp]
\centering
\caption{Example(b): error of $\tilde{U}_n^\delta$, with $\delta=1/M$, $h=\delta^{\frac78}$, $\tau=\delta^{\frac74}$, and $\gamma= \delta/5$.}\label{table4}\vskip5pt
\begin{tabular}{|c|c|cccc|c|}
\hline
$\al$&$t_n\backslash M$&40&80&160&320&\mbox{Rate}($\delta$)\\
\hline
   &0.1&6.76e-3&3.82e-3&2.06e-3&1.08e-3&0.88(1.00)\\
0.5&0.5&3.46e-3&1.90e-3&1.01e-3&5.24e-4&0.91(1.00)\\
   &0.9&2.55e-3&1.40e-3&7.47e-4&3.89e-4&0.90(1.00)\\
\hline
\end{tabular}
\end{table}	
\newpage

{
\paragraph{\bf Example (c): 2D problem.}
Now we consider a two-dimensional problem in a unit square domain $\Omega=(0,1)^2$. We choose the smooth initial condition
\begin{equation*}
u_0(x,y)=x(1-x)y(1-y)\in \dot{H}^2(\Omega),
\end{equation*}
and zero source term $f\equiv0$. In the computation, we divided $\Omega$
 into regular right triangles with $K$ equal
subintervals of length $h=1/K$ on each side of the domain.
Here, we apply the conjugate gradient method to numerically solve the
discrete system, instead of the direct approach by the spectral decomposition 
in Example (a) and (b).
 
For $t=0$, we let $h=\gamma=\sqrt{\delta}=\sqrt{\tau}$, and we observe that the
convergence rate is $O(\sqrt{\delta})$, see Table \ref{table5}).
Moreover, In Table \ref{table6}, we test the convergence rate for $t=T/2$. By letting $h=\sqrt{\gamma}=\sqrt{\delta}=\sqrt{\tau}$,
the experiments show that the convergence rate is $O(\delta)$.
All emperical results agree well with our theoretical finding in Theorem \ref{thm:fully-error-rev}.
 
\begin{table}[htbp]
\centering
\caption{Example(c): error of $\tilde{U}_0^\delta$, with $\delta=1/M$, $h=\sqrt{\delta}$, $\tau=\delta$, and $\gamma= \sqrt{\delta}$.}\label{table5}\vskip5pt
\begin{tabular}{|c|cccc|c|}
\hline
$\al\backslash M$&800&1600&3200&6400&\mbox{Rate}($\delta$)\\
\hline
0.25  &1.27e-2&9.57e-3&6.61e-3&3.96e-3&0.56(0.50)\\
0.5   &1.57e-2&1.27e-2&9.53e-3&6.57e-3&0.42(0.50)\\
0.75  &2.28e-3&1.96e-3&1.57e-3&1.11e-3&0.34(0.50)\\
\hline
\end{tabular}
\end{table}	

\begin{table}[htbp]
\centering
\caption{Example(c): error of $\tilde{U}_n^\delta$, with $t_n=T/2$, $\delta=1/M$, $h=\sqrt{\delta}$, $\tau=\delta$, and $\gamma= \delta$.}\label{table6}\vskip5pt
\begin{tabular}{|c| cccc|c|}
\hline
$\al \backslash M$&800&1600&3200&6400&\mbox{Rate}($\delta$)\\
\hline
0.25 & 5.09e-5&2.59e-5&1.31e-5&6.59e-6&0.98(1.00)\\
0.5  & 6.00e-5&3.08e-5&1.56e-5&7.90e-6&0.98(1.00)\\
0.75 & 7.06e-5&3.71e-5&1.89e-5&9.55e-6&0.96(1.00)\\
\hline
\end{tabular}
\end{table}}
%
%\begin{figure}[htbp]
%\centering
%\subfigure[exact  $u_0$]{
%\includegraphics[width=5.5cm]{graph/2D_0_genuine_winter}
%}
%\quad
%\subfigure[$M=1600$]{
%\includegraphics[width=5.5cm]{graph/2D_0_5_1600}
%}
%\quad
%\subfigure[$M=3200$]{
%\includegraphics[width=5.5cm]{graph/2D_0_5_3200}
%}
%\quad
%\subfigure[$M=6400$]{
%\includegraphics[width=5.5cm]{graph/2D_0_5_6400}
%}
%\caption{Fully discrete solution with $\al=0.5$, at $t_n=0$ with different $\delta=1/M$.}
%\end{figure}

%\begin{figure}[htbp]
%\centering
%\subfigure[exact $ u(t_n)$]{
%\includegraphics[width=5.5cm]{graph/2D_n_genuine_winter_100_8000}
%}
%\quad
%\subfigure[$M=1600$]{
%\includegraphics[width=5.5cm]{graph/2D_n_5_1600}
%}
%\quad
%\subfigure[$M=3200$]{
%\includegraphics[width=5.5cm]{graph/2D_n_5_3200}
%}
%\quad
%\subfigure[$M=6400$]{
%\includegraphics[width=5.5cm]{graph/2D_n_5_6400}
%}
%\caption{Fully discrete solution with $\al=0.5$, at $t_n=T/2$ with different $\delta=1/M$.}
%\end{figure}

\section*{Acknowledgements}
This project is partially supported by  a Hong Kong RGC grant (project no. 25300818).

\section*{References}


\begin{thebibliography}{10}

\bibitem{Bajlekov:2001}
E.~G. Bajlekova.
\newblock {\em Fractional {E}volution {E}quations in {B}anach {S}paces}.
\newblock PhD thesis, Eindhoven University of Technology, 2001.

\bibitem{Liu:2019}
D.~N. H\`ao, J.~Liu, N.~V. Duc, and N.~V. Thang.
\newblock Stability results for backward time-fractional parabolic equations.
\newblock {\em Inverse Problems}, 35(12):125006, 25, 2019.

\bibitem{Jin:positive}
B.~Jin, R.~Lazarov, V.~Thom\'{e}e, and Z.~Zhou.
\newblock On nonnegativity preservation in finite element methods for
  subdiffusion equations.
\newblock {\em Math. Comp.}, 86(307):2239--2260, 2017.

\bibitem{JinLazarovZhou:SIAM2013}
B.~Jin, R.~Lazarov, and Z.~Zhou.
\newblock Error estimates for a semidiscrete finite element method for
  fractional order parabolic equations.
\newblock {\em SIAM J. Numer. Anal.}, 51(1):445--466, 2013.

\bibitem{JinLiZhou:2017sisc}
B.~Jin, B.~Li, and Z.~Zhou.
\newblock Correction of high-order {BDF} convolution quadrature for fractional
  evolution equations.
\newblock {\em SIAM J. Sci. Comput.}, 39(6):A3129--A3152, 2017.

\bibitem{JinLiZhou:var}
B.~Jin, B.~Li, and Z.~Zhou.
\newblock Subdiffusion with a time-dependent coefficient: analysis and
  numerical solution.
\newblock {\em Math. Comp.}, 88(319):2157--2186, 2019.

\bibitem{JinRundell:2015}
B.~Jin and W.~Rundell.
\newblock A tutorial on inverse problems for anomalous diffusion processes.
\newblock {\em Inverse Problems}, 31(3):035003, 40, 2015.

\bibitem{KilbasSrivastavaTrujillo:2006}
A.~A. Kilbas, H.~M. Srivastava, and J.~J. Trujillo.
\newblock {\em Theory and {A}pplications of {F}ractional {D}ifferential
  {E}quations}.
\newblock Elsevier Science B.V., Amsterdam, 2006.

\bibitem{LiLiuYamamoto:2019b}
Z.~Li, Y.~Liu, and M.~Yamamoto.
\newblock Inverse problems of determining parameters of the fractional partial
  differential equations.
\newblock In {\em Handbook of fractional calculus with applications. {V}ol. 2},
  pages 431--442. De Gruyter, Berlin, 2019.

\bibitem{LiYamamoto:2019a}
Z.~Li and M.~Yamamoto.
\newblock Inverse problems of determining coefficients of the fractional
  partial differential equations.
\newblock In {\em Handbook of fractional calculus with applications. {V}ol. 2},
  pages 443--464. De Gruyter, Berlin, 2019.

\bibitem{LiuYamamoto:2010}
J.~J. Liu and M.~Yamamoto.
\newblock A backward problem for the time-fractional diffusion equation.
\newblock {\em Appl. Anal.}, 89(11):1769--1788, 2010.

\bibitem{LiuLiYamamoto:2019c}
Y.~Liu, Z.~Li, and M.~Yamamoto.
\newblock Inverse problems of determining sources of the fractional partial
  differential equations.
\newblock In {\em Handbook of fractional calculus with applications. {V}ol. 2},
  pages 411--429. De Gruyter, Berlin, 2019.

\bibitem{Lubich:1986}
C.~Lubich.
\newblock Discretized fractional calculus.
\newblock {\em SIAM J. Math. Anal.}, 17(3):704--719, 1986.

\bibitem{Metzler:2014}
R.~Metzler, J.-H. Jeon, A.~G. Cherstvy, and E.~Barkai.
\newblock Anomalous diffusion models and their properties: non-stationarity,
  non-ergodicity, and ageing at the centenary of single particle tracking.
\newblock {\em Phys. Chem. Chem. Phys.}, 16:24128, 37 pp., 2014.

\bibitem{MetzlerKlafter:2000}
R.~Metzler and J.~Klafter.
\newblock The random walk's guide to anomalous diffusion: a fractional dynamics
  approach.
\newblock {\em Phys. Rep.}, 339(1):1--77, 2000.

\bibitem{SakamotoYamamoto:2011}
K.~Sakamoto and M.~Yamamoto.
\newblock Initial value/boundary value problems for fractional diffusion-wave
  equations and applications to some inverse problems.
\newblock {\em J. Math. Anal. Appl.}, 382(1):426--447, 2011.

\bibitem{SeyboldHilfer:2008}
H.~Seybold and R.~Hilfer.
\newblock Numerical algorithm for calculating the generalized
  {M}ittag-{L}effler function.
\newblock {\em SIAM J. Numer. Anal.}, 47(1):69--88, 2008/09.

\bibitem{Simon:2014}
T.~Simon.
\newblock Comparing {F}r\'{e}chet and positive stable laws.
\newblock {\em Electron. J. Probab.}, 19:no. 16, 25, 2014.

\bibitem{Thomee:2006}
V.~Thom{\'e}e.
\newblock {\em Galerkin {F}inite {E}lement {M}ethods for {P}arabolic
  {P}roblems}.
\newblock Springer-Verlag, Berlin, 2nd edition, 2006.

\bibitem{WangWei:iter}
J.-G. Wang and T.~Wei.
\newblock An iterative method for backward time-fractional diffusion problem.
\newblock {\em Numer. Methods Partial Differential Equations},
  30(6):2029--2041, 2014.

\bibitem{WangLiu:TV}
L.~Wang and J.~Liu.
\newblock Total variation regularization for a backward time-fractional
  diffusion problem.
\newblock {\em Inverse Problems}, 29(11):115013, 22, 2013.

\bibitem{WeiWang:2014}
T.~Wei and J.-G. Wang.
\newblock A modified quasi-boundary value method for the backward
  time-fractional diffusion problem.
\newblock {\em ESAIM Math. Model. Numer. Anal.}, 48(2):603--621, 2014.

\bibitem{YangLiu:2013}
M.~Yang and J.~Liu.
\newblock Solving a final value fractional diffusion problem by boundary
  condition regularization.
\newblock {\em Appl. Numer. Math.}, 66:45--58, 2013.

\end{thebibliography}
\end{document}